\newcommand{\eps}{\epsilon}
\newcommand{\ep}{\varepsilon}
\newcommand{\LL}{\langle}
\newcommand{\RR}{\rangle}
\newcommand{\pd}{\partial}
\newcommand{\Ex}{\mathbb{E}}
\newcommand{\vp}{\varphi}
\newcommand{\sgn}{\mathrm{sgn}}
\newcommand{\x}{\mathbf{x}}
\newcommand{\y}{\mathbf{y}}
\newcommand{\z}{\mathbf{z}}
\newcommand{\h}{\mathbf{h}}
\newcommand{\A}{\mathbf{A}}
\newcommand{\B}{\mathbf{B}}
\newcommand{\FF}{\mathbf{F}}
\newcommand{\GG}{\mathbf{G}}
\newcommand{\ab}{{\boldsymbol\alpha}}
\newcommand{\be}{{\boldsymbol\beta}}
\renewcommand{\d}{{\rm d}}
\newcommand{\abs}[1]{\left | #1 \right |}
\newcommand{\norm}[1]{\left \| #1 \right \|}
\newcommand{\bk}[1]{\left( #1 \right)}
\newcommand{\R}{\mathbb{R}}
\newcommand{\T}{\mathbb{T}}
\newcommand{\Td}{\mathbb{T}^d}
\newtheorem{thm}{Theorem}[section]
\newtheorem{lma}{Lemma}[section]
\newtheorem{prop}{Proposition}[section]
\newtheorem{cor}{Corollary}[section]
\newdefinition{rem}{Remark}[section]
\newdefinition{defin}{Definition}[section]
\newproof{pf}{Proof}
\numberwithin{equation}{section}
\numberwithin{figure}{section}
\begin{document}

\begin{frontmatter}
\title{Nonlinear Anisotropic Degenerate Parabolic-Hyperbolic Equations with Stochastic Forcing}

\author[1]{Gui-Qiang G. Chen\corref{cor1}%
\fnref{fn1}}\ead{chengq@maths.ox.ac.uk}

\author[1,2]{Peter H.C. Pang\fnref{fn2}}
\ead{peter.pang@ntnu.no}

\cortext[cor1]{Corresponding author}
\fntext[fn1]{The research of Gui-Qiang G. Chen  was supported in part by
the UK
Engineering and Physical Sciences Research Council Awards
EP/L015811/1 and EP/V008854/1, and the Royal Society--Wolfson Research Merit Award WM090014 (UK).
}
\fntext[fn2]{The research of Peter Pang
was supported in part by the UK EPSRC Science and Innovation Award
to the Oxford Centre for Nonlinear PDE (EP/E035027/1),
a Croucher Oxford Scholarship granted by the Croucher Foundation,
and the Research Council of Norway Toppforsk Project on Waves and Nonlinear Phenomena (250070).
}

\address[1]{Mathematical Institute, University of Oxford,
Oxford, OX2 6GG, UK}
\address[2]{Department of Mathematical Sciences,
Norwegian University of Science and Technology,
NO-7491 Trondheim,
Norway}

\begin{keyword}
Stochastic kinetic solutions\sep anisotropic degenerate\sep  parabolic-hyperbolic equations\sep
heterogeneous\sep  unified framework \sep well-posedness\sep  continuous dependence\sep  fractional $BV$ estimate\sep
Nikolskii space\sep  $L^1$--contraction\sep  stability.

\MSC[2010]{35B30, 35B35, 35B65, 35K65, 35M10, 35M11, 35R60, 60H15}
\end{keyword}

\begin{abstract}
We are concerned with nonlinear anisotropic degenerate parabolic-hyperbolic equations with stochastic forcing,
which are heterogeneous ({\it i.e.}, not space-translational invariant).
A unified framework is established for the continuous dependence estimates,
fractional $BV$
regularity estimates,
and well-posedness for stochastic kinetic solutions of
the nonlinear stochastic degenerate parabolic-hyperbolic equation.
In particular, we establish
the well-posedness of the nonlinear stochastic equation
in $L^p \cap N^{\kappa,1}$ for $p\in (1,\infty)$ and the $\kappa$--Nikolskii space $N^{\kappa,1}$ with $\kappa>0$,
and the $L^1$--continuous dependence of the stochastic kinetic solutions not only on the initial data, but also on
the degenerate diffusion matrix function, the flux function, and the multiplicative noise function involved in the nonlinear
equation.
\end{abstract}

\end{frontmatter}

\section{Introduction}\label{sec:I_intro}

We are concerned with the continuous dependence
of stochastic kinetic solutions
of the Cauchy problem for the nonlinear anisotropic degenerate parabolic-hyperbolic equations with stochastic forcing:
\begin{align}
\pd_t u  + \nabla \cdot \FF(u,\x) =\nabla\cdot(\A(u)\nabla u)+ \sigma(u) \dot{W}  \,\qquad \mbox{for $\x\in \T^d$},
\label{eq:eq1_parab}
\end{align}
and initial data:
\begin{align}
u|_{t=0}=u_0(\x),
\label{eq:1.2}
\end{align}
where $\A(u)$ is a positive semi-definite matrix function so that there exists a positive semi-definite matrix
$\ab$ with $\A(u)=\ab(u)\ab(u)^\top$,
the flux function $\FF(u,\x)=(F^1,F^2,\cdots, F^d)(u,\x)$ is heterogeneous (depending on the space variable $\x$),
and $\sigma(u)$ is a multiplicative noise function.
In the noise term, $W=W(t)$ is a standard (one-dimensional) Brownian motion
on the abstract stochastic basis $(\Omega, \mathcal{F},\{\mathcal{F}_t\}_{t \ge 0},\mathbb{P})$.

In this paper, we first develop a unified framework for the continuous dependence estimates
on not only the initial data $u_0(\x)$ but also the
diffusion matrix $\A(u)$, the
flux function $\FF(u,\x)$, and the multiplicative noise function $\sigma(u)$.
Then we derive from this continuous dependence framework to obtain
both an $L^1$--stability property
and a fractional $BV$ estimate,
{\it i.e.}, a Nikolskii semi-norm estimate defined by \eqref{1.2b} below,
for stochastic kinetic solutions.
The motivation for such a study is three-fold:
First, equation \eqref{eq:eq1_parab} is heterogeneous ({\it i.e.}, not space-translational invariant)
so that the $BV$-in-space estimate
of solutions in terms of the $BV$ initial data
does not follow directly from the $L^1$--stability of solutions,
which is different from the space-translational invariant case as treated in Chen--Ding--Karlsen \cite{CDK2012}.
In fact, the $BV$-in-space estimate can be obtained only in the special case
that $D_\x\cdot\FF(u,\x):=\sum_{j=1}^d F_{x_j}^j(u,\x)$ is Lipschitz in its spatial argument $\x$;
in general, only a fractional $BV$-in-space bound ({\it i.e.}, bounded in the Nikolskii semi-norm) can be obtained,
which depends on the H\"older norm of $D_\x\cdot\FF(u,\x)$ in $\x$, as observed in this paper.
Second, we carry out our analysis  directly from the definition of stochastic kinetic solutions, which is independent of the choices
of approximate solutions,
different from \cite{CDK2012}.
Most importantly, we provide a uniform treatment for the $L^1$--continuous dependence estimates,
fractional $BV$ regularity estimates, and
well-posedness for stochastic kinetic solutions.
For the deterministic case, similar stability problems have been analyzed; see \cite{CK2006,Lec2011}
and the references cited therein.

For nonlinear stochastic hyperbolic balance laws:
\begin{align}
\pd_t u + \nabla \cdot \FF(u) = \sigma(u) \dot{W},
\label{eq:_conservation}
\end{align}
the $L^1$--continuous dependence estimates on the flux function $\FF(u)$,
the noise function $\sigma(u)$, and the initial data
$u_0(\x)$ have been established in Chen--Ding--Karlsen \cite{CDK2012},
based on the earlier work of Feng--Nualart \cite{FN2008} on the well-posedness
for \eqref{eq:_conservation}.
In \cite{FN2008}, the existence of strong stochastic entropy solutions,
which involve a non-adapted stochastic integral,
is achieved by the compensated compactness framework in Chen--Lu \cite{CL1989} for
$d = 1$.
In Chen--Ding--Karlsen \cite{CDK2012}, this restriction ($d=1$) is first removed
by combining the $BV$-estimate they developed with the $L^1$--contraction estimate of the
$BV$ solutions
so that the multidimensional case $d\ge 2$ can be handled.
It is observed in Karlsen--Storr\o sten \cite{KS2017} that there are other ways to achieve the well-posedness
to capture the noise-noise interaction in the comparison between two solutions,
without resorting to the {\it prima facie} contrived notion of strong stochastic entropy solutions
proposed in \cite{FN2008}.
One of them is the kinetic formulation approach that has been carried out
in Debussche--Vovelle \cite{DV2010} for \eqref{eq:_conservation},
in which the notion of strong entropy solutions can be avoided via introducing the kinetic defect measure;
in this approach, by linearizing the equation via the introduction of a new kinetic variable,
the interaction in certain cross terms involving the noise can
be handled by the use of the defect measure, instead of the direct integration
(see also \cite{DV2018}).
In Bauzet--Vallet--Wittbold \cite{BVW2012}, the formulation of strong entropy solutions is avoided
by comparing the stochastic entropy solution
directly to the corresponding vanishing viscosity solution.
Furthermore, in \cite{KS2017},  the Kruzhkov entropy condition is modified to
compare a solution
to a general Malliavin differentiable variable (instead of a constant),
by using an anticipating It\^o formula;
the vanishing viscosity solution is shown to be Malliavin differentiable,
and the framework in \cite{BVW2012} is used, for which indicates where the notion of strong stochastic entropy solutions
in \cite{FN2008} may arise ({\it cf.} Remark 5.1 in \cite{KS2017}).
It would be interesting to study underlying theoretical connections
between the kinetic formulation approach and the Malliavin calculus approach.

It bears pointing out that, in the very specific context of continuous dependence for \eqref{eq:eq1_parab},
the deterministic and stochastic theories diverge,
and we encounter difficulties and structures peculiar to stochastic balance laws.
In particular, the It\^o correction difference may prevent an account of continuous dependence
with the forcing terms involving on the solution itself, in addition to the spatial or temporal variables.
However, it is still possible to consider the problem as we do here for which the flux depends on the spatial variable directly,
which may have applications in considering stochastic balance laws
on manifolds \cite{GK2019},
where a connection is spatially dependent,
and the kinetic formulation of the equation is more intricate.

Other variations on the well-posedness theory of balance laws have been considered.
The most prominent of these arise with conservative Stratonovich noises,
in which the noise takes the divergence form.
These are of some interest in physical systems as they arise from the perturbation
of characteristics \cite{FGP2010}.
In particular, Fehrman--Gess \cite{FG2019} investigated the well-posedness and
continuous dependence of the stochastic degenerate
parabolic equation of porous medium type:
$$
\pd_t u + \nabla A(\x,u)\circ \d z_t = \Delta (|u|^{m-1}u),
$$
including the fast diffusion case $m<1$, where $z_t$ is a geometric rough path,
which includes the case that $z_t$ is a finite-dimensional Brownian motion.
This builds on the results collected in \cite{BDR2016} for the stochastic PDEs of this form.
Also see \cite{CP2019,DV2010} for the existence of invariant measures
for nonlinear conservation laws driven by stochastic forcing.

\smallskip
This paper consists of eight sections.
In \S \ref{sec:I_kinetic_formulation}, we introduce the notion of stochastic kinetic solutions in a {\it divergence form}
for \eqref{eq:eq1_parab}.
In \S \ref{sec:I_preparation}, we develop a general framework for the continuous dependence estimates
of the stochastic kinetic solutions.
In \S \ref{sec:I_L1_estimates}, we employ the framework in \S \ref{sec:I_preparation}
to establish the $L^1$--stability of stochastic kinetic solutions of equation \eqref{eq:eq1_parab}.
In \S \ref{sec:I_BV_estimate}, we employ the framework
to derive the fractional $BV$ estimate ({\it i.e.}, the Nikolskii semi-norm estimate).
Using the fractional $BV$ estimate
in \S \ref{sec:I_BV_estimate},
we complete the $L^1$--continuous dependence estimate in \S\ref{sec:I_continuous_dependence}.
In \S\ref{sec:I_existence}, we establish the existence of stochastic kinetic solutions.
In \S 8, we derive a temporal fractional $BV$ estimate of stochastic kinetic solutions.

\smallskip
Before we proceed further, we address two notational points:
First, we denote $\nabla$ the material derivative,
and $\nabla^i$ the material derivative in the $x_i$--variable
(the $i$th coordinate of $\nabla$)
so that
\begin{align*}
&\nabla^i \FF(u,\x) := \FF_u (u,\x) \nabla^i u + \FF_{x_i}(u,\x), \\
& D_\x\cdot\FF(\cdot, \x):=F_{x_i}^i(\cdot,\x),\\
&\nabla \cdot \FF(u,\x) := F^i_u(u,\x)\nabla^i u + D_\x\cdot \FF(u,\x),
\end{align*}
where we have used the Einstein summation convention that repeated indices are implicitly summed over,
which will be used throughout this paper from now on.
Second, the Nikolskii space $N^{\kappa,1}$ is defined by
\begin{equation}\label{1.2a}
v \in N^{\kappa,1} \quad \longleftrightarrow \quad \|v\|_{N^{\kappa,1}}:=\Ex\big[\|v\|_{L^1}\big] + \Ex\big[|v|_{N^{\kappa,1}}\big] < \infty,
\end{equation}
which forms a Banach space ({\it cf}. \cite{Sim1990} for the deterministic case), where the semi-norm $\Ex\big[|v|_{N^{\kappa,1}}\big]$ is defined by
\begin{equation}\label{1.2b}
\Ex\big[|v|_{N^{\kappa,1}}\big]:= \sup_{|\h| > 0} \Ex\big[\int \frac{|v(\y + \h) - v(\y)|}{|\h|^{\kappa }} \;\d\y\big].
\end{equation}

We assume that the functions involved satisfy the following conditions for $\x,\y\in \T^d$:
\begin{align}
&D_\x\cdot\FF_u(\cdot,\x)= F^j_{ux_j}(\cdot,\x) \in L^\infty, \label{eq:A_F}\\
&|\FF_u(u,\x) - \FF_u(v,\x)|\leq
C\big(|u|^{p- 1} + |v|^{p - 1}+1\big) |u - v|^{\kappa_{F1}}, \label{eq:A_kappa1}\\
&|D_\x\cdot \FF(u,\x)  - D_\y\cdot \FF(u, \y)| \leq C\big(|u|^{q}+1\big)|\x - \y|^{\kappa_{F2}}, \label{eq:A_kappa2}\\
&|\sigma(u) - \sigma(v)| \leq C|u - v|^{\lambda_\sigma}, \label{eq:A_sigma}\\
&\sup_{i,j}|\ab_{ij}(u) - \ab_{ij}(v)| \leq C|u -v|^{\gamma_\ab} \label{eq:A_alpha}
\end{align}
for some constant $C>0$,
where
$\FF_u(u,\cdot)$ and $\FF_{x_j}(\cdot,\x)$ denote the partial derivatives with respect to $u$ and $x_j$ respectively,
$\kappa_{F1} > 0$, $\kappa_{F2} > 0$, $\lambda_\sigma >\frac{1}{2}$, and  $\gamma_\ab > \frac{1}{2}$.
We also assume that $D_\x\cdot \FF(u,\x)$ and $\sigma(u)$ have at most linear growth in $u$,
and $\A(u)$ has polynomial growth in $u$.

\smallskip
The results established in this paper on $\T^d$ can directly be extended to the whole space $\R^d$ by the
techniques developed here. For this purpose, it requires to modify the test function in the proof arguments
by multiplying a non-negative smooth weight function
with appropriate decay rate at infinity.
The results established here can also be extended to more general stochastic forcing
such as a multidimensional or a cylindrical Brownian motion:
$$
 \d B = \d B(u,t)=\sum_{k=0}^{m} \LL\Phi(u) , \d W_k(t) \mathbf{e}_k\RR_\mathcal{H},
$$
where $\mathcal{H}$ is an $m$-dimensional Hilbert space (with $m$ possibly infinite)
with a complete orthonormal basis $\{\mathbf{e}_k\}$, $W_k$ are the independent standard Brownian motions,
and $\Phi: \mathbb{R} \to \mathcal{H}$
with $\LL\Phi(u), \mathbf{e}_k\RR_\mathcal{H} = g_k(u)$ and $\sum_k g_k^2(u) \leq C(|u|^2+1)$.
The results can also be adapted to the additive noise:
$$
\d B(\x,t)=\sum_{k=0}^{\infty} g_k(\x)\d W_k(t),
$$
where $\sum_k g_k^2 \in L^1(\Td)$.
It would be interesting to
extend our analysis
to the noises with all three arguments of form: $B(u,\x,t)=\sum_{k=0}^{\infty} g_k(u,\x)\d W_k(t)$.
There are new difficulties when the noises depend on both solution $u$ and the spatial variable $\x$
in doubling spatial variables in order to quantify the continuous dependence.
Essentially, one necessarily comes across the terms where the continuity of $g$ in the two arguments are in competition.
This competition manifests itself in the expressions such as $|g(\zeta, \y) - g(\xi,\x)|$
under an appropriate integral (see \eqref{3.13e} and \eqref{eq:correction_cde} below)
so that, if $\zeta\to \xi$ first, some terms become unbounded and, if $\y\to \x$ first, the other terms become unbounded.

\section{Stochastic Kinetic Formulation}\label{sec:I_kinetic_formulation}

In this section, we introduce the notion of stochastic kinetic solutions for \eqref{eq:eq1_parab},
motivated by the earlier work in  Chen--Perthame \cite{CP2003}; see also  Lions--Perthame--Tadmor \cite{LPT1994a}
for the hyperbolic case, and Debussche--Hofmanova--Vovelle \cite{DHV2016} and Gess--Souganidis \cite{GS2017}
for the translation-invariant degenerate parabolic treatment.
Because of the heterogeneity of the flux function $\FF=\FF(u,\x)$,
the definition of a stochastic kinetic solution has to be generalized
to preserve a structure of {\it divergence form};
see Definition \ref{def:ksolution} below.

\smallskip
We now motivate the notion of {\it stochastic kinetic solutions} heuristically as a
form of weak solutions.
Denote the Heaviside function $H(r) = \mathds{1}_{r > 0}(r)$.
Starting from the smooth approximate solutions $u^\eps$ satisfying the following equation with viscosity:
\begin{align}\label{eq:viscosity_approximation}
\pd_t u^\eps + \nabla \cdot \FF(u^\eps,\x)
= \nabla\cdot \big(\A(u^\eps)\nabla u^\eps\big)+ \sigma(u^\eps) \dot{W}  + \eps \Delta u^\eps,
\end{align}
we multiply both sides of \eqref{eq:viscosity_approximation} by $-H'(\xi - u^\eps)$ for the approximate solution $u^\eps$
to obtain
\begin{align}
&\pd_t H(\xi - u^\eps)\nonumber\\
&= \, H'(\xi - u^\eps)  \FF_u(u^\eps,\x) \cdot \nabla u^\eps + H'(\xi - u^\eps) D_\x\cdot \FF(u^\eps,\x) \nonumber\\
&\quad - \nabla \cdot \big(H'(\xi - u^\eps) \A(u^\eps) \nabla u^\eps\big)
+ \A(u^\eps) :  \big(\nabla H'(\xi - u^\eps)\otimes \nabla u^\eps\big)\nonumber\\
&\quad - H'(\xi - u^\eps) \sigma(u^\eps) \dot{W} + \frac{1}{2} H''(\xi - u^\eps) \sigma^2(u^\eps)\nonumber\\
&\quad + \eps \Delta H(\xi - u^\eps) + \eps \nabla H'(\xi - u^\eps)\cdot \nabla u^\eps\nonumber\\
&= - \nabla \cdot \big(\FF_u(\xi,\x)H(\xi - u^\eps)\big) + \pd_\xi\big(H(\xi - u^\eps) D_\x \cdot \FF(\xi,\x)\big) \nonumber\\
&\quad+ \A(\xi) : \nabla^2 H(\xi - u^\eps) - \delta(\xi - u^\eps) \sigma(\xi) \dot{W}\nonumber\\
&\quad- \pd_\xi \big(\eps \delta(\xi - u^\eps) |\nabla u^\eps|^2
 + \delta(\xi - u^\eps) \A(\xi) : (\nabla u^\eps\otimes \nabla u^\eps) - \frac{1}{2}\delta(\xi - u^\eps) \sigma^2(\xi)\big)\nonumber\\
&\quad + \eps\Delta H(\xi - u^\eps), \label{2.2a}
\end{align}
where we have used $H'(\xi - u^\eps) = \delta( \xi - u^\eps)$
and the colon to denote the element-wise scalar product so that
$\A:\B = \sum_{1 \leq i,j\leq d} a_{ij}b_{ij}$ for $d \times d$ matrices $\A =(a_{ij})$ and $\B = (b_{ij})$.

\medskip
Assume that $u^\eps(\x,t)\to u(\x,t)$ {\it a.e.} as $\eps\to 0$.
Then, letting $\eps\to 0$, we arrive at the \emph{kinetic formulation} of the equation:
\begin{align}
&\pd_t H(\xi - u) + \nabla \cdot \big(\FF_u(\xi,\x)  H(\xi - u) \big)
   -\pd_\xi \big(H(\xi - u) D_\x\cdot\FF(\xi,\x) \big) \nonumber \\
&=\A(\xi) : \nabla ^2 H(\xi - u) - \pd_\xi H(\xi - u) \sigma(\xi) \dot{W} - \pd_\xi(m^u + n^u - p^u),
\label{eq:eq2_kinetic}
\end{align}
where measures $m^u$, $n^u$, and $p^u=\sigma(\xi)\delta(\xi-u)\sigma(\xi)$ are the weak limits of the kinetic dissipation,
parabolic defect, and It\^o correction measures as $\eps \to 0$, respectively:
\begin{align*}
&\eps |\nabla u^\eps|^2 \delta(\xi - u^\eps)
\,{\rightharpoonup} \,m^u,\\
& \A(\xi) : (\nabla u^\eps \otimes \nabla u^\eps)\,\delta(\xi - u^\eps)
\,{\rightharpoonup} \,n^u,\\
&\frac{1}{2} \sigma^2(\xi)\delta(\xi - u^\eps)
\,{\rightharpoonup} \,p^u.
\end{align*}

Denote by $\mathfrak{M}_1(\mathbb{R})$ the set of probability measures on $\mathbb{R}$
and by $\mathfrak{M}_b^+$  the set of non-negative bounded Radon measures.
Moreover, denote $C^\infty_c$ the space of compactly supported smooth functions.
Let $\mathcal{L}_\R$ and $\mathcal{L}_{\T^d}$ respectively be the Lebesgue measure on $\R$ and on the flat torus $\T^d$.
Let $\mathcal{B}([0,T])$ be the Borel algebra on $[0,T]$ and let $\mathcal{B}(\mathbb{T}^d)$ be the Borel algebra on $\T^d$.
Let $\mathcal{P}_T$ be the predictable $\sigma$-algebra of $\mathcal{B}([0,T]) \otimes \mathcal{F}$;
that is, $\mathcal{P}_T$ is generated by all real-valued left-continuous processes adapted
to filtration $\{\mathcal{F}_t\}_{t\ge 0}$.
The predictable subspace $L^p_P(\Omega \times [0,T] \times \T^d)$ denotes the subspace of
functions $\mathbb{P}\otimes \mathcal{L}_{\R} \otimes \mathcal{L}_{\mathbb{T}^d}$--almost everywhere equal
to a $\mathcal{P}_T\otimes \mathcal{B}(\T^d)$-measurable function
of $L^p(\Omega \times [0,T]\times \T^d)$  (also see \cite[\S 2.1.1]{DV2018} and the references cited there).

We can now make the following definition, clarifying the roles of the measures exhibited above.

\begin{defin}[Stochastic kinetic solutions]\label{def:ksolution}
A function
$$
u \in L^p_P(\Omega \times [0,T]\times \T^d) \cap L^p(\Omega; L^\infty([0,T]; L^p(\T^d)))
$$
is called a \emph{stochastic kinetic solution} of (\ref{eq:eq1_parab})--\eqref{eq:1.2}
in $\Omega\times\T^d\times [0,T]$ for some $T>0$
provided that
$u$ satisfies the following conditions:

\smallskip
\begin{itemize}
\item[(i)]
$\nabla \cdot \int_0^u \ab(\xi) \;\d \xi \in L^2(\Omega \times \T^d\times [0,T])$;

\smallskip
\item[(ii)] For any $\varphi \in C_b(\mathbb{R})$ (bounded continuous functions),
the Chen--Perthame chain rule relation holds (see \cite{CP2003}):
\begin{align}\label{eq:art_chainrule}
\nabla \cdot \big(\int_0^u  \ab(\xi) \varphi(\xi) \;\d \xi\big)
= \varphi(u)\, \nabla \cdot \big(\int_0^u \ab(\xi) \;\d \xi\big)
\end{align}
in the sense of distributions in $\T^d$ and almost everywhere in $(\omega, t)${\rm ;}

\smallskip
\item[(iii)]
For any $\varphi \in C_c^1(\R \times \T^d)$, $t \mapsto \iint H(\xi - u(\x, t))\varphi(\xi, \x) \;\d \xi\,\d \x$
is c\`adl\`ag ({\it i.e.}, right-continuous with left limits);

\smallskip
\item[(iv)]
There are non-negative $\mathfrak{M}_b^+(\R\times \Td \times [0,T] )$--valued variables $m^u$, $n^u$, and $p^u$ such that
\begin{equation} \label{eq:definsol}
\begin{aligned}
&\int_0^T \iint  H(\xi - u) \pd_t \varphi \;\d \xi\, \d \x \,\d t +\int_0^T \iint  \FF_u(\xi,\x)\cdot \nabla \varphi \;\d\xi \,\d\x \,\d t\\
&\quad  \,\,-  \int_0^T \iint  D_\x\cdot\FF(\xi,\x) H(\xi - u)\,\varphi_\xi \;\d\xi \,\d\x \,\d t\\
&=-\int_0^T \iint H(\xi - u)   \A(\xi):\nabla ^2\varphi \;\d \xi\,\d\x \,\d t \\
&\quad\,  -\int_0^T \iint \varphi_\xi \,\d (m^u+n^u-p^u)(\xi, \x, t)\\
&\quad \, - \int_0^T \int \sigma(u)\varphi(u,\x,t) \;\d \x \,\d W(t)\
  + \iint  H(\xi - u_0)\varphi(\xi,\x,0)\;\d\xi \,\d\x
\end{aligned}
\end{equation}
almost surely, for any $\varphi \in C^\infty_c(\mathbb{R}, \T^d \times [0,T))$.
Here,
$p^u: \Omega \to \mathfrak{M}^+_b(\mathbb{R}\times \T^d \times \R_+)$
is an \emph{It\^o correction measure}:
\begin{equation} \label{eq:Itomeasure}
\,\,\, p^u(\varphi):= \frac{1}{2}\int_0^\infty \int_{\T^d} \sigma^2(u) \varphi(u,\x,t)\,\d\x\,\d t
\quad \mbox{for any $\varphi \in  {C_c}(\mathbb{R}\times \T^d \times \R_+)$,}
\end{equation}
$n^u : \Omega \to \mathfrak{M}^+_b(\mathbb{R}\times \T^d \times \R_+)$ is the
a \emph{parabolic defect measure}:
\begin{align} \label{eq:pdefectmeas}
n^u(\varphi):= \int_0^\infty\int_{\T^d}
  \Big|\nabla\cdot \big(\int_0^{u(\x,t)}\boldsymbol{\alpha}(\zeta)\;\d \zeta\big)\Big|^2 \varphi(u(\x,t),\x,t) \;\d\x\,\d t
\end{align}
{for any $\varphi \in {C_c}(\mathbb{R}\times \T^d \times \R_+)$,}
and  $m^u: \Omega \to \mathfrak{M}^+_b(\mathbb{R}\times \T^d \times \R_+)$ is the {\it kinetic defect measure}
satisfying that, for
any $\varphi \in  {C_c}(\mathbb{R}\times\T^d)$ and $t\in (0,T]$,
\[
\int_{\mathbb{R}\times \T^d \times [0,t]} \varphi(\xi, \x)\, \d_{(\xi,\x,s)} m^u(\xi, \x, s;\omega) \in L^2(\Omega\times [0,T])
\]
has predictable
representative (that is $\mathbb{P}\otimes \mathcal{L}_\R$-almost everywhere equal to a function in $L^2_P(\Omega \times [0,T])$).
\end{itemize}
\end{defin}

\medskip
\begin{rem}
In this section, we introduce the kinetic formulation \eqref{eq:eq2_kinetic}
for stochastic kinetic solutions in the sense of \eqref{eq:definsol}
with the associated kinetic measure $m^u$, parabolic defect measure $n^u$, and It\^o correction measure $p^u$ in the periodic domain.
The existence of stochastic kinetic solutions in the periodic domain will be established in \S 7.
The kinetic formulation can also be defined in $\R^d$ or any other domain, correspondingly.
Equation \eqref{eq:definsol} is obtained by testing (\ref{eq:eq2_kinetic})
with $\varphi$ and using the Chen--Perthame chain rule (\ref{eq:art_chainrule}) in \cite{CP2003} (also see \cite{CP2009}).
For the isotropic case, the chain rule is not needed ({\it cf.} \cite{Car1999,CD2001}).
\end{rem}

\begin{rem}
For a stochastic kinetic solution $u$, we observe that, for any $B^c_R \subset\mathbb{R}$ (the complement of an interval of radius $R$)
and $T\in (0, \infty)$,
\begin{align}\label{eq:kmeas_decay}
\lim_{R \to\infty} \Ex\big[(m^u+n^u)(B^c_R\times \T^d \times [0,T])\big] = 0.
\end{align}
\end{rem}

\begin{rem}
Denote $\bar{\nabla}:= (D_\x, -\pd_\xi)$. Then the two integrals involving the flux function $\FF$ in (\ref{eq:definsol}) can be expressed as
\begin{align*}
\int_0^T \iint H(\xi - u)\,(\FF_u, D_\x\cdot \FF)\cdot \bar{\nabla} \varphi \;\d\xi\d\x\d t,
\end{align*}
which shows clearly the divergence structure attained in this formulation for stochastic kinetic solutions,
so that the integral above can be seen as
\begin{align*}
-\int_0^T \iint \bar{H}(\xi - u)\, (\FF_u, D_\x\cdot\FF)\cdot \bar{\nabla} \varphi \;\d\xi\d\x\d t
\end{align*}
for $\bar{H} := 1 - H$.
\end{rem}

\section{Framework for Continuous Dependence Estimates} \label{sec:I_preparation}

In this section, we develop a general framework for the continuous dependence estimates
of stochastic kinetic solutions.
Consider the pair of nonlinear equations:
\begin{align}
&\pd_t u - \nabla  \cdot (\A(u) \nabla u) + \nabla \cdot \FF(u,\x) =  \sigma(u) \dot{W}, \label{eq:equation1}\\
&\pd_t v - \nabla  \cdot (\B(v) \nabla  v) + \nabla \cdot \GG(v,\x) =  \tau(v) \dot{W},\label{eq:equation2}
\end{align}
where $\B$ is also a positive semi-definite matrix with square root $\be=(\beta_{ij})$.

Corresponding to assumptions (\ref{eq:A_F})--(\ref{eq:A_alpha}) for (\ref{eq:equation1}), we assume the
following conditions for \eqref{eq:equation2} for $\x,\y\in \T^d$:
\begin{align}
&D_\x\cdot\GG_u(\cdot,\x) \in  L^\infty, \label{eq:AB_F}\\[1mm]
&|\GG_u(u,\x) - \GG_u(v,\x)|\leq
C\big(|u|^{p - 1} + |v|^{p -1}+1\big) |u - v|^{\kappa_{G1}},\label{eq:AB_kappa1}\\[1mm]
&|D_\x\cdot \GG(u,\x)  - D_\y\cdot \GG(u, \y)| \leq C\big(|u|^{q }+1\big)|\x - \y|^{\kappa_{G2}},\label{eq:AB_kappa2}\\[1mm]
&|\tau(u) - \tau(v)| \leq C|u - v|^{\lambda_\tau}, \label{eq:AB_sigma}\\[1mm]
&\sup_{i,j}|\beta_{ij}(u) - \beta_{ij}(v)| \leq C|u -v|^{\gamma_\be}.  \label{eq:AB_alpha}
\end{align}
We allow $\kappa_{G1}$, $\kappa_{G2}$, $\lambda_\tau$, and $\gamma_\be$ to be different
from $\kappa_{F1}$, $\kappa_{F2}$, $\lambda_\sigma$, and $\gamma_\ab$, respectively,
but we still assume that $D_\x\cdot \GG(u,\x)$ and $\tau(u)$ have at most linear growth in $u$
and $\B(u)$ has polynomial growth in $u$.
As before, we require that $\kappa_{G1} > 0$, $\kappa_{G2}>0$,
$\lambda_\tau >\frac{1}{2}$, and $\gamma_\be > \frac{1}{2}$.

We employ the Kruzhkov doubling-of-variable technique and attempt to bound the difference
of their stochastic kinetic solutions, so that the kinetic solution $u$
of \eqref{eq:equation1}  is understood to take the spatial variable $\x$, and
the kinetic solution $v$ of \eqref{eq:equation2}
is understood to take the spatial variable $\y$.

In the following, we always assume
$$
u_0, v_0 \in L^p(\Omega, \mathcal{P}, \d\mathbb{P}; L^p(\T^d)) \cap  L^p(\Omega, \mathcal{P}, \d\mathbb{P}; N^{\kappa,1}(\T^d)).
$$

The role of the kinetic function is based on the observation:
\begin{align*}
\int_\mathbb{R} H(\xi - u(\x,t)) (1 - H(\xi - v(\y,t))) \;\d \xi = (v(\y,t) - u(\x,t))_+.
\end{align*}
The manipulations are formally only as they stand directly.
Thus, we have to make mollifications for justification.

\smallskip
Let $\eta_1: \mathbb{R} \to \mathbb{R}$ be defined as a smooth convex function,
equal to $(\cdot)_+$ outside $[-1,1] \subseteq \mathbb{R}$,
and symmetric with respect to the origin in the sense that $\eta'_1(-r) = 1 - \eta'_1(r)$.
Such a function $\eta_1$ can be constructed so that
$\eta'_1(r) := \int_{-\infty}^r \tilde{J}_1(s)\;\d s$,
where $\tilde{J}_1$ is a standard symmetric bump function supported on $[-1,1]$
such as
$\tilde{J}_1(r) = C\exp(\frac{1}{1 - r^2})$
with choice of $C$ as the normalization constant
so that $\int_{\mathbb{R}} \tilde{J}_1(r)\;\d r = 1$.
Now scaling by $\rho$ in the usual way to obtain an approximation to $\delta(r)$:
$\eta''_\rho(r)= \frac{1}{\rho}\eta_1''(\frac{r}{\rho})$,
so that $\eta_\rho'(r)$ preserves the symmetry:
\begin{equation}\label{3.8a}
1 - \eta_\rho'(r) = \eta_\rho'(-r).
\end{equation}
Finally, we set
\begin{align}\label{eq:eta_construction}
\eta_\rho(r) := \int_{-\infty}^r \eta'_\rho(s)\;\d s.
\end{align}
By the symmetry,
we see that $\eta_\rho$ coincides with $(\cdot)_+$ outside $[-\rho,\rho]$.

\smallskip
Using the definition of the Heaviside function $H$ and writing $\bar{H}(\zeta - v):=1 - H(\zeta - v)$,
we have
\begin{align}
&\int_\mathbb{R} \int_{\mathbb{R}} H(\xi - u(\x,t)) \bar{H}(\zeta - v(\y,t))
  \,\eta''_\rho(\zeta-\xi)\;\d\zeta\d\xi\nonumber\\
&=\int_u^\infty \eta'_\rho(v(\y,t) - \xi) \;\d \xi
 =  \eta_\rho(v(\y,t) - u(\x,t)),\label{eq:kinetic_combo}
\end{align}
while $\eta_\rho(v - u)$ approximates $(v(\y,t) - u(\x,t))_+$ as $\rho \to 0$.

\smallskip
Define
\begin{equation}\label{3.11}
J_\theta(\cdot):=\frac{1}{\theta^d}J(\frac{\cdot}{\theta}),
\end{equation}
where $J: \T^d \to \R$ is a smooth symmetric Friedrichs mollifier on $\T^d$.
Then we can further multiply \eqref{eq:kinetic_combo} by $J_\theta(\y  - \x)$
and integrate in $\d \y$ to approximate $(v(\x,t) - u(\x, t))_+$ as $\theta \to 0$.

\smallskip
Before proceeding to the manipulations that mould the equation into a form
similar to the terms in (\ref{eq:kinetic_combo}) above,
we state a lemma that provides a way to leverage definition (\ref{eq:definsol}) into a more versatile form.
This is essentially Proposition 10 of Debussche--Vovelle \cite{DV2010} (see also \cite[Proposition~2.10]{DV2018},
\cite[Proposition 3.1]{DHV2016}, and \cite[Proposition 3.1]{Hof2013}).

\begin{lma}\label{thm:tech_lem}
Let $u$ be a kinetic solution of \emph{(\ref{eq:eq1_parab})}.
Then there exist representatives $f^\pm(\xi,\x; t)$ of $H(\xi - u(\x,t)) = \mathds{1}_{\xi > u(\x,t)}(\xi)$ that are almost
surely left- and right-continuous-in-time.
That is, for any $\psi \in C^2_c(\mathbb{R}\times \T^d)$,
\begin{enumerate}
\item[\rm (i)] for every $\tau \in (0,T]$,
\[
\iint H(\xi - u(\x,\tau \pm \ep)) \psi(\xi,\x)\;\d\x\d\xi
\overset{\ep \to 0}{\longrightarrow} \iint f^{\pm}(\xi,\x,\tau)\psi(\xi,\x)\;\d\x\d\xi
\quad a.s.{\rm ;}
\]
\item[\rm (ii)] for $\tau=0$,
\[
\iint  H(\xi - u(\x, \ep)) \psi(\xi,\x)\;\d\x\d\xi
\overset{\ep \to 0}{\longrightarrow} \iint f^{+}(\xi,\x; 0)\psi(\xi,\x)\;\d\x\d\xi
\quad a.s..
\]
\end{enumerate}
Moreover, for any $t \in [0,T]$ and $\psi \in C^1_{\rm c}(\R\times \T^d)$,
\begin{equation}\label{3.12a}
\iint (f^+ - f^- )(\xi,\x;t) \psi(\xi,\x)\,\d \x \,\d \xi
= - \int_0^T \iint \mathds{1}_{\{t\}}(s)\pd_\xi\psi(\xi,\x)\, \d m^u (\xi, \x, s),
\end{equation}
so that
$f^+ = f^-=H(\cdot - u)$
except on at most a countable subset of $[0,T]$.
\end{lma}

To arrive at \eqref{3.12a}, we have used the following property of the parabolic measure $n^u(\xi,\x,t)$:
For any $t \in [0,T]$ and $\phi\in C_c(\R\times \T^d )$,
\begin{equation}
\begin{aligned}\label{3.12b}
&\int_0^T \iint \mathds{1}_{\{t\}}(s)\phi(\xi,\x)\, \d n^u (\xi, \x, s)\\
&=\int_0^T \iint \mathds{1}_{\{t\}}(s)\phi(\xi,\x)
   \Big|\nabla_\x\cdot \int_0^{u(\x,s)} \ab(\zeta) \;\d \zeta\Big|^2 \;\d\x\d\y\d s
   =0,
\end{aligned}
\end{equation}
since $\nabla_\x\cdot \int_0^{u(\x,s)} \ab(\zeta) \;\d \zeta \in  L^2(\Omega \times \T^d\times [0,T])$.

\medskip
Using the definition of stochastic kinetic solutions in (\ref{eq:definsol}),
we can manipulate to obtain the bounds
for the terms in (\ref{eq:kinetic_combo}) above in the following way:

\smallskip
We first derive a version of (\ref{eq:definsol}) without the temporal integral
by choosing a test function of
form: $\varphi(\xi,\x,s) = \phi(\xi,\x) \chi^\ep(s)$ with
\begin{align}\label{eq:cutoff_time}
\chi^\ep (s)
: = \begin{cases}
1 &\quad \mbox{for $s \leq t$}, \\
1 - \frac{s-t}{\ep} &\quad \mbox{for $t \leq s \leq t + \ep$},\\
0 &\quad \mbox{for $s \geq t + \ep$},
\end{cases}
\end{align}
so that $-\pd_s \chi^\ep$ approximates $\delta_t(s)$  as $\ep \to 0$.

Then, from (\ref{eq:definsol}),
\begin{align}
&\int_0^T \iint  H(\xi - u) \pd_s \left( \phi\,  \chi^\ep\right)\;\d\xi\,\d\x\,\d s
  +\int_0^T \iint H(\xi - u)\,   \FF_u(\xi,\x)\cdot \nabla \phi\,  \chi^\ep \;\d\xi\,\d\x\,\d s\nonumber\\
 &\quad - \int_0^T \iint H(\xi - u)   \, D_\x\cdot\FF(\xi,\x)\, \phi_\xi\, \chi^\ep \;\d\xi\,\d\x\,\d s\nonumber\\
&=-\int_0^T \iint H(\xi - u)   \A(\xi): \nabla^2\phi\, \chi^\ep \;\d \xi\,\d\x\,\d s
   - \int_0^T \iint  \phi_\xi\, \chi^\ep\, \d (m^u + n^u)(\xi,\x, s)\nonumber\\
&\quad +  \frac{1}{2} \int_0^T \int  \sigma^2(u)\phi_u(u,\x)\,\chi^\ep(s)\;\d\x\,\d s
  + \int_0^T \int  \sigma(u)\phi(u,\x)\, \chi^\ep(s) \;\d \x\d W(s)\nonumber \\
&\quad + \iint  H(\xi - u_0)\,\phi(\xi,\x)\;\d\xi\,\d \x.\label{3.11-a}
\end{align}
Taking limit $\ep\to 0$ in both sides of \eqref{3.11-a}, we apply Lemma \ref{thm:tech_lem} to obtain
\begin{align}\label{eq:definsol2}
H^+_u(\phi):=\iint f^+(\xi,\x,t) \phi\;\d\x\d \xi
= I^{u}_0(\phi) +  I^{u}_1(\phi) + B^{u}(\phi),
\end{align}
where $f(\xi,\x, t)$ agrees with $H(\xi - u(\x,t))$ almost surely, except possibly on a countable subset of $[0,T]$, and
\begin{align}
I^u_0(\phi) & =  \iint f^+(\xi,\x,0) \phi \;\d\x\,\d\xi, \label{3.13aa}\\
I^u_1(\phi) &= \int_0^t \iint H(\xi - u)\,\FF_u(\xi,\x)\cdot\nabla_\x\, \phi \;\d\x\,\d\xi\,\d s \notag\\
&\quad   - \int_0^t \iint D_\x\cdot\FF(\xi,\x) H(\xi - u)\, \phi_\xi\;\d\x\,\d\xi\,\d s \notag\\
&\quad + \int_0^t \iint  H(\xi - u)\,  \A (\xi):\nabla^2_\x\phi \;\d\x\,\d\xi\,\d s  \notag\\
&\quad  - \frac{1}{2}\int_0^t \int \sigma^2(u(\x,s)) \phi_u(u(\x,s),\x) \;\d \x\,\d s\notag\\
&\quad + \int_0^{t}\iint \phi_\xi(\xi,\x) \; \d n^u(\xi,\x,s)
  + \int_0^{t+0}\iint \phi_\xi(\xi,\x) \; \d m^u(\xi,\x,s),\nonumber\\
            \label{3.13ba}\\
B^u(\phi) &=  - \int_0^t \int \sigma(u(\x,s))\,\phi(u(\x,s),\x) \;\d\x\,\d W(s). \label{3.13ca}
\end{align}

If $\chi^\ep(s)$ in \eqref{eq:cutoff_time} is replaced by
\begin{align*}
\chi_\ep (s)
: = \begin{cases}
1 &\quad \mbox{for $s \leq t-\ep$}, \\
1 - \frac{s-(t-\ep)}{\ep} &\quad \mbox{for $t-\ep \leq s \leq t$},\\
0 &\quad \mbox{for $s \geq t$},
\end{cases}
\end{align*}
we obtain a similar identity to \eqref{eq:definsol2} with $f^+$ replaced by $f^-$ and the last term in \eqref{3.13ba}
replaced by $\int_0^{t-0}\iint \phi_\xi(\xi,\x) \; \d m^u(\xi,\x,s)$.

\smallskip
As for the analogous equation for $\bar{H}(\zeta - v)=1-H(\zeta- v)$, with analogous representative $\bar{g}^+(\zeta,\y,t)$,
 making the requisite changes in (\ref{eq:eq2_kinetic}) directly,
we obtain
\begin{align}
\bar{H}^+_v(\tilde{\phi}):= \iint \bar{g}^+(\zeta,\y,t)\tilde{\phi}\; \d \y\,\d \zeta
= \bar{I}^v_0(\tilde{\phi}) +  \bar{I}^v_1(\tilde{\phi}) +  \bar{B}^v(\tilde{\phi}),\label{eq:definsol2a}
\end{align}
where
\begin{align}
\bar{I}^v_0(\tilde{\phi}) & = - \iint \bar{g}^+(\zeta,\y,0)\tilde{\phi}\;\d\y\,\d\zeta,  \label{3.14a}\\
\bar{I}^v_1(\tilde{\phi}) &= \int_0^t \iint  \bar{H}(\zeta - v)\,\GG_u(\zeta,\y)\cdot \nabla_\y \tilde{\phi} \;\d\y\,\d\zeta\,\d s\notag\\
&\quad
 - \int_0^t \iint D_\x\cdot\GG(\zeta,\y) \bar{H}(\zeta - v)\, \tilde{\phi}_\zeta \;\d\y\,\d\zeta\,\d s\notag\\
&\quad + \int_0^t \iint \bar{H}(\zeta - v)\, \B(\zeta): \nabla^2_\y\tilde{\phi} \;\d\y\,\d\zeta\,\d s\nonumber\\
&\quad
+ \frac{1}{2}\int_0^t \int \tau^2(v(\y,t))\,\tilde{\phi}_v(v(\y,t),\y) \;\d\y\,\d s
\notag\\
&\quad -\int_0^{t}\iint \tilde{\phi}_\zeta(\zeta,\y)\; \d n^v(\zeta,\y,s)
  -\int_0^{t+0}\iint \tilde{\phi}_\zeta(\zeta,\y)\; \d m^v(\zeta,\y,s),\notag\\
     \label{3.14b}\\
\bar{B}^v(\tilde{\phi}) & = \int_0^t \int \tau(v(\y,t))\,\tilde{\phi}(v(\y,t),\y) \;\d\y\,\d W(s). \label{3.14c}
\end{align}
Then we can find an expression for the left-hand side of (\ref{eq:kinetic_combo})
via (\ref{eq:definsol2})--\eqref{3.14c}
by choosing the test functions that will be subsequently prescribed.

\subsection{Product Estimate}
We can now use the expression for the left-hand side of (\ref{eq:kinetic_combo}).

\begin{prop}\label{thm:lma_product}
Let $u$ be a kinetic solution of \emph{(\ref{eq:equation1})} with initial data $u_0$,
and let $v$ be a kinetic solution of \emph{(\ref{eq:equation2})} with initial data $v_0$.
Let the nonlinear functions in \emph{(\ref{eq:equation1})}--\emph{(\ref{eq:equation2})}
satisfy \emph{(\ref{eq:A_F})}--\emph{(\ref{eq:A_alpha})}
and \emph{(\ref{eq:AB_F})}--\emph{(\ref{eq:AB_alpha})}.
Then
\begin{align}
\Ex\big[\iint f^+(\xi,\x,t)\bar{g}^+(\xi,\x,t)\;\d \xi\, \d \x \big]
\leq
\Ex\big[I^0\big] + \Ex\big[I^a\big] + \Ex\big[I^F\big] + \Ex\big[I^\sigma\big] + \Ex\big[I^\eta\big],
\label{3.13a}
\end{align}
where
\begin{align}
I^0 = & \int  f^+(\xi,\x,0) \bar{g}^+(\zeta,\y,0) \varphi(\xi,\zeta,\x,\y) \;\d E,\label{3.13b}\\
I^a = & \int_0^t \int  \bar{H}(\zeta - v) H(\xi - u)\big(\A(\xi):\nabla^2_{\x} \varphi + \B(\zeta):\nabla^2_{\y} \varphi\big)\;\d E\, \d s \notag\\
& -  \int_0^{t} \iint\int \varphi(\xi,v,\x,\y) \; \d n^u(\xi,\x,s)\,\d\y
 - \int_0^{t} \iint\int \varphi(u,\zeta,\x,\y) \; \d n^v(\zeta,\y,s)\,\d\x,\label{3.13d}\\
I^F = &  \int_0^t \int  \bar{H}(\zeta - v)  H(\xi - u)\big(\FF_u(\xi,\x)  \cdot \nabla_\x \varphi + \GG_u(\zeta, \y) \cdot \nabla_\y \varphi\big)\;\d E \,\d s\notag\\
& - \int_0^t \int  \bar{H}(\zeta - v)H(\xi - u)\big( D_\y\cdot \GG(\zeta,\y) - D_\x\cdot\FF(\xi,\x)\big) \; \varphi_\zeta\;\d E\,\d s,\label{3.13c}\\
I^\sigma = &\,\frac{1}{2}\int_0^t \iint \big(\tau(v(\y,s)) - \sigma(u(\x,s)) \big)^2\varphi(u,v,\x,\y) \;\d \x\,\d\y\,\d s, \label{3.13e}\\
I^\eta = &\iint f^+(\xi,\x,t)\bar{g}^+(\xi,\x,t)\;\d \xi\, \d \x -\int f^+(\xi,\x,t)\bar{g}^+(\zeta,\y,t) \varphi(\xi,\zeta,\x,\y)\;\d E, \label{3.13f}
\end{align}
with $\varphi(\xi,\zeta,\x,\y) := \eta''_\rho(\zeta-\xi) J_\theta(\y - \x)$
and $\d E:=\d\xi\,\d\zeta\,\d\x\,\d\y$.
\end{prop}

\begin{pf}
We divide the proof into five steps.

\smallskip
1. For simplicity of notation,
we write \eqref{eq:definsol2} and \eqref{eq:definsol2a} as
\begin{align}\label{eq:definsol3}
H^+_u = I^u_0 + I^u_1
+ B^u
\end{align}
and
\begin{align}
\bar{H}^+_v = \bar{I}^v_0 + \bar{I}_1^v + \bar{B}^v,
\label{eq:definsol3a}
\end{align}
by dropping the dependence $\phi$ and $\tilde{\phi}$ in these functionals when no confusion arises.

Multiplying (\ref{eq:definsol3}) by (\ref{eq:definsol3a}), we have
\begin{align}
H^+_u\bar{H}^+_v = &\,\int f^+(\xi,\x,t)\bar{g}^+(\zeta,\y,t)\phi(\xi,\x) \tilde{\phi}(\zeta,\y)\;\d E\nonumber\\
=&\,  \int f^+(\xi,\x,t)\bar{g}^+(\zeta,\y,t) \varphi(\xi,\zeta,\x,\y) \;\d E\nonumber\\
=&\,  I^u_0\bar{I}^v_0 + I^u_1\bar{H}^+_v + \bar{I}^v_1 H^+_u
- I^u_1\bar{I}^v_1 + B^u\bar{B}^v +I_0^u \bar{B}^v+\bar{I}_0^v B^u, \label{eq:definsol3b}
\end{align}
where we have denoted $\varphi(\xi,\zeta,\x,\y)=\phi(\xi,\x) \tilde{\phi}(\zeta,\y)$.

\medskip
2. Since $I^u_1$ and $\bar{I}^v_1$ are the processes of finite variation ({\it cf}. \cite[Proposition~0.4.5]{RY2005}),
then integrating by parts yields
\begin{align*}
I^u_1\bar{H}^+_v
&=  \int_0^t \bar{H}^+_v(s)\;\d I^u_1(s)
   + \int_0^t I^u_1(s)\;\d \bar{H}^+_v(s)\notag \\
&=  \int_0^t \bar{H}^+_v(s-)\;\d I^u_1(s)
   + \int_0^t I^u_1(s-)\;\d \bar{H}^+_v(s) + \sum \Delta I^u_1(s) \Delta \bar{H}^+_v(s)\\
&=  \int_0^t \bar{H}^+_v(s-)\;\d I^u_1(s)
   + \left(\int_0^t I^u_1(s-)\; \d \bar{I}^v_1(s) + \int_0^t I^u_1(s-)\; \d \bar{B}^v(s) \right)  \\&\quad\,\,
   + \sum \Delta I^u_1(s) \Delta \bar{H}^+_v(s),\notag
\end{align*}
where the sum is over the countable number of points at which the jumps
are non-zero.
Similarly, we have
\begin{align*}
\bar{I}^v_1H^+_u
& =  \int_0^t H^+_u(s)\;\d \bar{I}^v_1(s)
   + \int_0^t \bar{I}^v_1(s)\;\d H^+_u(s) \notag\\
&= \int_0^t H^+_u (s-)\;\d \bar{I}^v_1(s)
 + \left(\int_0^t \bar{I}^v_1(s-)\; \d I^u_1(s)+ \int_0^t \bar{I}^v_1(s-)\; \d B^u(s) \right)  \\&\quad\,\,
  + \sum \Delta \bar{I}^v_1(s) \Delta H^+_u(s).
\end{align*}
Furthermore, we obtain
\begin{align}\label{eq:IuIv_jump}
I^u_1\bar{I}^v_1
= \int_0^t I^u_1(s-) \;\d \bar{I}^v_1(s)+ \int_0^t \bar{I}^v_1(s-) \;\d I^u_1(s)
  + \sum \Delta I^u_1(s) \Delta \bar{I}^v_1(s).
\end{align}

The only jumps that may occur come from the terms, $m^u(\phi_\xi\times[0,s])$ in $I^u_1$
and $-m^v(\tilde{\phi}_\zeta\times[0,s])$ in $\bar{I}^v_1$, so that
\begin{equation}\label{eq:jump_equal}
\begin{aligned}
 \Delta  H^+_u(s)  =  \Delta I^u_1(s) = \Delta m^u(\phi_\xi\times[0,s]),\\
\Delta  \bar{H}^+_v(s)  =  \Delta \bar{I}^v_1(s) = \Delta m^v(\tilde{\phi}_\zeta\times[0,s]).
\end{aligned}
\end{equation}
Therefore, we have
\begin{align*}
&I^u_1\bar{H}^+_v + \bar{I}^v_1H^+_u- I^u_1\bar{I}^v_1 \\
&=  \int_0^t \bar{H}^+_v(s-)\;\d I^u_1(s)
    + \int_0^t H^+_u(s-) \;\d \bar{I}^v_1(s)
   + \int_0^t I^u_1(s-)\; \d \bar{B}^v(s)
     \\&\quad\,\,
  + \int_0^t \bar{I}^v_1(s-)\; \d B^u(s)
  + \sum \Delta \bar{H}^+_v(s) \Delta H^+_u(s).
\end{align*}
Next we claim that
\begin{eqnarray}
&&\int_0^t \bar{H}^+_v(s-)\;\d I^u_1(s) +  \int_0^t H^+_u(s-) \;\d \bar{I}^v_1(s)
  +\sum \Delta \bar{H}^+_v(s) \Delta H^+_u(s)\nonumber \\
&&=\int_0^t H^+_u(s) \;\d \bar{I}^v_1(s) +  \int_0^t \bar{H}^+_v(s)\;\d I^u_1(s). \label{eq:jumps}
\end{eqnarray}
This can be seen by using \eqref{eq:jump_equal} to obtain
\begin{align*}
\int_0^t \bar{H}^+_v(s)\,\d I^u_1(s) -  \int_0^t \bar{H}^+_v (s-)\,\d I^u_1(s)
&= \int_0^t \bar{I}^v_1(s)\,\d I^u_1(s) - \int_0^t \bar{I}^v_1(s-)\,\d I^u_1(s)  ,\\
\int_0^t H^+_u(s)\,\d \bar{I}^v_1(s)  - \int_0^t H^+_u (s-)\,\d \bar{I}^v_1(s)
&= \int_0^t I^u_1(s)\,\d\bar{I}^v_1(s) - \int_0^t I^u_1(s-)\,\d \bar{I}^v_1(s),
\end{align*}
from which the claim follows by \eqref{eq:IuIv_jump}. With \eqref{eq:jumps}, we can conclude that
\begin{equation}\label{eq:grandproduct}
\begin{aligned}
&\int  f^+(\xi,\x,t)\bar{g}^+(\zeta,\y,t)\varphi(\xi,\zeta,\x,\y)
\;\d E \\
&= I^u_0\bar{I}^v_0+ \int_0^t \iint \bar{H}(\zeta - v(\y,s)) \tilde{\phi}_\zeta
\;\d \zeta\d\y\d I^u_1\\&\quad\,\,
+ \int_0^t \iint H(\xi - u(\x,s)) \phi_\xi
 \;\d\xi\d\x\d \bar{I}^v_1
+ B^u \bar{B}^v + \mathscr{M},\,\,
\end{aligned}
\end{equation}
where $\mathscr{M}$ denotes a martingale term, which has expectation zero.

\smallskip
3. Next we have
\begin{align}
I^u_0\bar{I}^v_0  = \int  f^+(\xi ,\x,0) \bar{g}^+(\zeta,\y,0) \varphi(\xi,\zeta,\x,\y)
\;\d E.
\label{3.19}
\end{align}
By the It\^o isometry,
\begin{align}
\Ex\big[B^u \bar{B}^v\big]
&= - \Ex\big[\int_0^t \int  \sigma(u(\x,s))\phi(u(\x,s),\x)\;\d\x\d W(s)\notag\\
&\qquad\qquad \times
   \,\int_0^t \int  \tau(v(\y,s))\tilde{\phi}(v(\y,s),\y) \;\d\y\d W(s)\big]\notag\\
&=-\Ex\big[\int_0^t \iint \sigma(u)  \tau(v) \varphi(u,v,\x,\y)\;\d\x\d\y\d s\big].
\label{3.18c}
\end{align}

\medskip
4. By a density argument via the monotone class
theorem (see {\it e.g.}, \cite[\S 2.3.1]{Vla2002} or \cite[(24)--(25)]{DV2010} in the context),
we can choose
\begin{equation}\label{3.20-a}
\vp(\xi,\zeta,\x,\y):= \eta''_\rho(\zeta-\xi)J_\theta(\y-\x)\ge 0,
\end{equation}
where  $\eta_\rho$ and $J_\theta$ are defined as in  \eqref{eq:eta_construction} and \eqref{3.11}.
With such a choice of the test function, we have the following usual identities:
\begin{align}\label{eq:testf_identities}
\nabla_\x \varphi +  \nabla_\y \varphi = 0,\quad\,
\nabla ^2_{\x} \varphi -  \nabla^2_{\y} \varphi =0,\quad\,
\varphi_\xi + \varphi_\zeta = 0.
\end{align}

Combining \eqref{3.19}--\eqref{3.18c} with (\ref{eq:grandproduct})
and using $\varphi_\xi = -\varphi_\zeta$,
 we have
\begin{align*}
&\Ex\big[\int f^+(\xi,\x,t)\bar{g}^+(\zeta,\y,t) \varphi(\xi, \zeta,\x, \y)\;\d E\big]\\
&=
\Ex\big[I^0\big]  +\Ex\big[I^a\big] +\Ex\big[I^F\big] +\Ex\big[I^\sigma\big]\\
&\quad\,\, - \int_0^{t+0} \iint\int \varphi(\xi,v^+,\x,\y) \;\d\y\, \d m^u(\xi,\x,s)\\
&\quad\,\,  - \int_0^{t+0} \iint\int \varphi(u^+,\zeta,\x,\y) \; \d\x\,\d m^v(\zeta,\y, s)\\
&\le
\Ex\big[I^0\big]  + \Ex\big[I^a\big] + \Ex\big[I^F\big] + \Ex\big[I^\sigma\big],
\end{align*}
since $m^u$ and $m^v$ are non-negative Radon measures, and $\Ex\big[\mathscr{M}\big]=0$,
where $u^+$ and $v^+$ are the right continuous versions of $u$ and $v$, respectively,
and there is no distinction between $u$ and $u^+$ within a time integral
against a non-atomic measure.

\smallskip
5. By the definition of $I^\eta$,
we conclude \eqref{3.13a}.
\end{pf}

\subsection{Difference Estimates}
From \eqref{3.13a},
we need to estimate the integral terms $\Ex\big[I^0\big]$,
$\Ex\big[I^a\big]$, $\Ex\big[I^F\big]$, $\Ex\big[I^\sigma\big]$, and $\Ex\big[I^\eta\big]$
as defined in \eqref{3.13b}--\eqref{3.13f}.
We refer to these integral terms as the \emph{initial term},  \emph{parabolic term}, \emph{flux term},
 \emph{It\^o correction term}, and \emph{mollification term}, respectively.

\begin{prop}\label{thm:flux_parabolic_bound}
Let $u$ be a kinetic solution of \emph{(\ref{eq:equation1})} with initial data $u_0$,
and let $v$ be a kinetic solution of \emph{(\ref{eq:equation2})} with initial data $v_0$.
Let $\eta_\rho$ and  $J_\theta$ be defined as in \eqref{eq:eta_construction} and \eqref{3.11}.
Let $\ab$ and $\be$ satisfy \eqref{eq:A_alpha} with respective indices $\gamma_\ab$ and $\gamma_\be$,
and let $\sigma$ and $\tau$ satisfy \eqref{eq:A_sigma} with respective indices $\lambda_\sigma$ and $\lambda_\tau$.
Assume that
\begin{align}
&\|\sqrt{\B} - \sqrt{\A} \|_{L^\infty} : =\sup_{i,j}\| \beta_{ij} - \alpha_{ij}\|_{L^\infty}<\infty, \label{eq:A_alpha_beta}\\
&\|(\GG_u - \FF_u ,  D_\x\cdot (\GG - \FF),  \tau -  \sigma )\|_{L^\infty} < \infty. \label{eq:A_F_G}
\end{align}
Then the following estimates hold\,{\rm :}
\begin{itemize}
\item[\rm (i)] For the parabolic term,
\begin{align}
\Ex\big[I^a\big]
\leq &\,
d \theta^{-2}\big(\| \sqrt{\B} - \sqrt{\A} \|^2_{L^\infty}
  + C(\ab)\rho^{\gamma_\ab}(\| \sqrt{\B} - \sqrt{\A}\|_{L^\infty}
   +\rho^{\gamma_\ab})\big)\nonumber\\
& \,\, \times \Ex\big[\int_0^t \iint  \eta_\rho(v(\y,s) - u(\x,s)) J_\theta(\y-\x)\;\d\x\d\y\d s\big],
\label{eq:parabolic_cde}
\end{align}
where $C(\ab) \ge \|\ab\|_{C^{\gamma_\ab}}$.

\smallskip
\item[\rm (ii)] For the flux term,
\begin{align}
\Ex\big[I^F\big]
\leq&\,
 C\|D_\x \cdot \FF_u\|_{L^\infty} \Ex\big[\int_0^t\iint \eta_\rho(v(\y,s)-u(\x,s)) J_\theta(\y-\x) \;\d\x\d\y\d s\big]\notag\\
&\, + C\big(\theta^{-1}\|\GG_u - \FF_u \|_{L^\infty}
 + \rho^{-1}\|D_\x\cdot (\GG - \FF)\|_{L^\infty}\big)\notag\\
&\quad\,\,\,\,\,\times \Ex\big[\int_0^t \iint  \eta_\rho(v(\y,s) - u(\x,s)) J_\theta(\y-\x)\;\d\x\d\y\d s\big]\notag\\
&+ C\big(\rho^{\kappa_{F1}}\theta^{-1} + \theta^{\kappa_{F2}}\big)\Ex\big[ \int_0^t\int \big(|(u,v)|^p + |(u,v)|^q +1\big) \;\d\x\d s\big],
\label{eq:flux_cde}
\end{align}
where $C$ depends on $d$, $|\T^d|$, $\FF$, and $\GG$.

\smallskip
\item[\rm (iii)] For the It\^o correction term,
\begin{align}\label{eq:correction_cde}
\Ex[I^\sigma] \leq
 C t \rho^{-1} \big(\|\tau - \sigma \|_{L^\infty}^2 +  \rho^{2\lambda_\sigma}\big),
\end{align}
where $C$ is a constant depending on $d$, $|\T^d|$, $\sigma$, and $\tau$.

\smallskip
\item[\rm (iv)] For the mollification term,
\begin{align}
\Ex[I^\eta]& = o_{\theta,\rho}(1) \to 0 \qquad \mbox{as $\theta, \rho\to 0$}.
\label{eq:BV_necessity}
\end{align}
\end{itemize}
\end{prop}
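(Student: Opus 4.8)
\smallskip
The plan is to substitute the explicit non-negative test function $\varphi(\xi,\zeta,\x,\y)=\eta''_\rho(\zeta-\xi)\,J_\theta(\y-\x)$ into \eqref{3.13b}--\eqref{3.13f} and to treat the four quantities separately, using throughout the symmetries \eqref{eq:testf_identities} (whence $\nabla_\x\varphi=-\nabla_\y\varphi=-\eta''_\rho(\zeta-\xi)(\nabla J_\theta)(\y-\x)$, $\nabla^2_\x\varphi=\nabla^2_\y\varphi$, $\varphi_\xi=-\varphi_\zeta$), the identity $\int_\R\int_\R H(\xi-u)\bar H(\zeta-v)\,\eta''_\rho(\zeta-\xi)\,\d\xi\,\d\zeta=\eta_\rho(v-u)$ from \eqref{eq:kinetic_combo} and its one-variable analogues (such as $\int_\R H(\xi-u)\,\eta''_\rho(v-\xi)\,\d\xi=\eta'_\rho(v-u)$, via \eqref{3.8a}), the zero-mean property $\int_{\T^d}(\nabla J_\theta)(\y-\x)\,\d\y=0$, and the scalings $\|\eta''_\rho\|_{L^\infty}\lesssim\rho^{-1}$, $\|\eta'''_\rho\|_{L^1}\lesssim\rho^{-1}$, $\|\nabla J_\theta\|_{L^1}\lesssim\theta^{-1}$, $\|D^2 J_\theta\|_{L^1}\lesssim d\,\theta^{-2}$. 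Two of the estimates are then immediate. For the It\^o correction term \eqref{3.13e}, on the support of $\eta''_\rho(v-u)$ one has $|v-u|\le\rho$, so $(\tau(v)-\sigma(u))^2\le 2\|\tau-\sigma\|_{L^\infty}^2+2C^2\rho^{2\lambda_\sigma}$ by \eqref{eq:A_sigma}, and \eqref{eq:correction_cde} follows from $\|\eta''_\rho\|_{L^\infty}\lesssim\rho^{-1}$ and $\iint_{\T^d\times\T^d}J_\theta(\y-\x)\,\d\x\,\d\y=|\T^d|$. For the mollification term \eqref{3.13f}, convexity of $\eta_\rho$ and its tangency to the lines $y=0$ and $y=r$ give $\eta_\rho(r)\ge r_+$, hence $I^\eta\le\int(v(\x)-u(\x))_+\,\d\x-\iint(v(\y)-u(\x))_+J_\theta(\y-\x)\,\d\x\,\d\y$; a change of variables, $\int J_\theta=1$, and the pointwise inequality $(v(\x)-u(\x))_+-(v(\x)-u(\x+\h))_+\le(u(\x+\h)-u(\x))_+$ then give \eqref{eq:BV_necessity}.

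The parabolic term \eqref{3.13d} is the technical heart. Here the plan is to carry out, pathwise (the diffusion being deterministic, so no new It\^o term appears), the non-isotropic Chen--Perthame computation from \cite{CP2003}: using the chain rule of Definition~\ref{def:ksolution}(ii) together with the identity $\ab(\xi)\nabla_\x H(\xi-u)=-\delta(\xi-u)\,\nabla_\x\cdot\int_0^u\ab(\zeta)\,\d\zeta$ (and its $v$-analogue with $\be$), one combines the diffusion contributions $\bar H H\big(\A(\xi):\nabla^2_\x\varphi+\B(\zeta):\nabla^2_\y\varphi\big)$ with the parabolic defect measures $n^u,n^v$ and completes the square; the genuinely dissipative part is sign-definite and is discarded, and the remaining error is a quadratic expression in the mismatch $\ab(\xi)-\be(\zeta)$ tested against $\eta''_\rho(\zeta-\xi)$ and $D^2 J_\theta(\y-\x)$. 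On the support of $\eta''_\rho(\zeta-\xi)$ one has $|\xi-\zeta|\le\rho$, so by \eqref{eq:A_alpha}, \eqref{eq:AB_alpha}, \eqref{eq:A_alpha_beta},
\[
|\ab(\xi)-\be(\zeta)|\le|\ab(\xi)-\ab(\zeta)|+|\ab(\zeta)-\be(\zeta)|\le C(\ab)\,\rho^{\gamma_\ab}+\|\sqrt{\B}-\sqrt{\A}\|_{L^\infty};
\]
squaring, pairing with $\|D^2 J_\theta\|_{L^1}\lesssim d\,\theta^{-2}$, and absorbing the leftover $\eta''_\rho$ and the controlled square-roots-of-defect into the discarded dissipation by Young's inequality reproduces the coefficient of \eqref{eq:parabolic_cde} in front of $\Ex\int_0^t\iint\eta_\rho(v-u)J_\theta(\y-\x)$. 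I expect the main obstacle to be making this cancellation rigorous in the degenerate case $\A,\B\ge0$ --- justifying the chain rule and the completion of the square when $\ab,\be$ are only Hölder (not Lipschitz), which is exactly where the hypotheses $\gamma_\ab,\gamma_\be>\tfrac12$ enter.

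For the flux term \eqref{3.13c}, the plan is to exploit the divergence structure of the kinetic formulation. In the part with $\nabla_\x\varphi,\nabla_\y\varphi$, I use $\nabla_\x\varphi=-\nabla_\y\varphi$ to reduce the integrand to $\bar H(\zeta-v)H(\xi-u)\,\eta''_\rho(\zeta-\xi)\,\big(\GG_u(\zeta,\y)-\FF_u(\xi,\x)\big)\cdot(\nabla J_\theta)(\y-\x)$ and telescope $\GG_u(\zeta,\y)-\FF_u(\xi,\x)=[\GG_u-\FF_u](\zeta,\y)+[\FF_u(\zeta,\y)-\FF_u(\xi,\y)]+[\FF_u(\xi,\y)-\FF_u(\xi,\x)]$: the first difference is $\le\|\GG_u-\FF_u\|_{L^\infty}$ and, paired with $\|\nabla J_\theta\|_{L^1}\lesssim\theta^{-1}$, gives the $\theta^{-1}\|\GG_u-\FF_u\|_{L^\infty}$ term; the second is $\lesssim(|\xi|^{p-1}+|\zeta|^{p-1}+1)\rho^{\kappa_{F1}}$ on the support of $\eta''_\rho(\zeta-\xi)$ by \eqref{eq:A_kappa1} and, using $\eta_\rho(v-u)\lesssim|u|+|v|+1$, accounts for the $\rho^{\kappa_{F1}}\theta^{-1}$ polynomial term; and the third, genuinely heterogeneous, piece is treated by one further integration by parts (in $\y$, resp.\ $\x$), which trades $\FF_u(\xi,\cdot)$ for its $L^\infty$ divergence $D_\x\cdot\FF_u$ via \eqref{eq:A_F} --- producing the $\|D_\x\cdot\FF_u\|_{L^\infty}\int\eta_\rho J_\theta$ term --- with the residual boundary contributions absorbed against the kinetic identity \eqref{eq:definsol}. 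In the part with $\varphi_\zeta$, I integrate by parts in $\zeta$ (using $\partial_\zeta D_\y\cdot\GG(\zeta,\y)=D_\y\cdot\GG_u(\zeta,\y)\in L^\infty$ by \eqref{eq:AB_F}) and telescope $D_\y\cdot\GG(\zeta,\y)-D_\x\cdot\FF(\xi,\x)$ into a piece $\le\|D_\x\cdot(\GG-\FF)\|_{L^\infty}$ (which with $\|\eta'''_\rho\|_{L^1}\lesssim\rho^{-1}$ gives the $\rho^{-1}\|D_\x\cdot(\GG-\FF)\|_{L^\infty}$ term), a spatial Hölder piece $\lesssim(|\zeta|^q+1)\theta^{\kappa_{F2}}$ by \eqref{eq:A_kappa2}, and a $u$-variable piece which is $\le\|D_\x\cdot\FF_u\|_{L^\infty}\,|\zeta-\xi|\le\|D_\x\cdot\FF_u\|_{L^\infty}\rho$ on the support of $\eta'''_\rho(\zeta-\xi)$ since \eqref{eq:A_F} makes $D_\x\cdot\FF(\cdot,\x)$ Lipschitz in its first slot. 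Collecting these and absorbing all polynomial moments (finite by the $L^p$-bounds on $u,v$) into the majorant $|(u,v)|^p+|(u,v)|^q+1$ yields \eqref{eq:flux_cde}; finiteness of every coefficient is guaranteed by \eqref{eq:A_alpha_beta}--\eqref{eq:A_F_G}.
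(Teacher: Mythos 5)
Your overall route coincides with the paper's: the same test function, the same use of \eqref{eq:testf_identities}, the same splitting of $\tau(v)-\sigma(u)$ for \eqref{eq:correction_cde}, the same $(\cdot)_+\le\eta_\rho(\cdot)$ argument for \eqref{eq:BV_necessity}, the same chain-rule/defect-measure completion of the square for $I^a$ followed by telescoping $\ab(\xi)-\be(\zeta)$ through $\ab(\zeta)$, and the same flux decompositions \eqref{eq:3.13c_expanded-a}--\eqref{eq:3.13c_expanded-b} with \eqref{eq:A_kappa1} exploited on the support of $\eta''_\rho$ exactly as in \eqref{eq:DHV_est}. However, one step would fail as you describe it: the heterogeneous piece $\big(\FF_u(\xi,\y)-\FF_u(\xi,\x)\big)\cdot\nabla_\y\varphi$. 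You propose a further integration by parts in $\y$ (resp.\ $\x$) so as to trade $\FF_u(\xi,\cdot)$ for its divergence $D_\x\cdot\FF_u$, dismissing the remainder as ``residual boundary contributions absorbed against the kinetic identity \eqref{eq:definsol}''. On the torus there are no boundary terms; the term this integration by parts actually produces is the one in which the spatial derivative falls on the Heaviside factor of the solution, i.e.\ a term of the form $\delta(\zeta-v(\y))\,\nabla_\y v\cdot\big(\FF_u(\xi,\y)-\FF_u(\xi,\x)\big)J_\theta\,\eta''_\rho$ (resp.\ with $\delta(\xi-u(\x))\,\nabla_\x u$). For a kinetic solution of the degenerate problem, $\nabla_\y v$ is not a function or even a locally finite measure --- only $\nabla\cdot\int_0^{v}\be(\zeta)\,\d\zeta\in L^2$ is controlled, via Definition \ref{def:ksolution}(i)--(ii) --- so this term cannot be estimated, and it cannot be ``absorbed'' into \eqref{eq:definsol}. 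The paper never differentiates the Heaviside factors in the spatial variables: it bounds this piece directly, using the Lipschitz-in-$\x$ control encoded by \eqref{eq:A_F} together with $|\y-\x|\,|\nabla J_\theta(\y-\x)|\le C\,J_\theta(\y-\x)$ on the support of $J_\theta$, which gives \eqref{eq:FF_est3} at once; the companion piece $D_\x\cdot\FF(\zeta,\x)-D_\x\cdot\FF(\xi,\x)$ is handled in the same pointwise way in \eqref{eq:FF_est4}. Your argument is repaired simply by replacing the spatial integration by parts with this direct estimate (your kinetic-variable integration by parts in $\zeta$, using \eqref{eq:AB_F}, is unobjectionable and parallels the paper's $\xi$-integration by parts in \eqref{eq:FF_est2}).

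Two smaller points. First, you quote only the $L^1$ scalings $\|\nabla J_\theta\|_{L^1}\le C\theta^{-1}$ and $\|D^2J_\theta\|_{L^1}\le C d\,\theta^{-2}$, but the right-hand sides of \eqref{eq:parabolic_cde}--\eqref{eq:flux_cde} carry the weight $J_\theta(\y-\x)$ inside $\Ex\int_0^t\iint\eta_\rho(v-u)J_\theta$, and it is this weighted quantity that the Gronwall arguments of \S\ref{sec:I_L1_estimates}--\S\ref{sec:I_continuous_dependence} require; to retain it you must use the pointwise dominations $|\nabla J_\theta|\le C\theta^{-1}J_\theta$ and $|\nabla^2 J_\theta|\le C\theta^{-2}J_\theta$ (as the paper does), not merely the $L^1$ bounds. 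Second, the difficulty you anticipate about ``justifying the chain rule'' does not arise: \eqref{eq:art_chainrule} is part of the definition of a kinetic solution, and the hypotheses $\gamma_\ab,\gamma_\be>\tfrac12$ enter only later, in the choice of $\rho$ and $\theta$, not in the completion of the square.
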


\begin{pf} We divide the proof into four steps.

\smallskip
1. {\it Parabolic terms}.
With reference to (\ref{3.13d}) where $I^a$ is defined, we first show
\begin{align}
I^a
\leq & \int_0^t \int  \bar{H}(\zeta - v) H(\xi - u) \bk{\be(\zeta) -\ab(\zeta) }\bk{\be(\zeta) - \ab(\zeta) } : \nabla^2_{\x\y} \varphi\;\d E\d s\notag\\
& +\int_0^t \int  \bar{H}(\zeta - v) H(\xi - u) \bk{\be(\zeta) - \ab(\zeta)}\bk{\ab(\zeta) - \ab(\xi)}: \nabla^2_{\x\y} \varphi\;\d E\d s\notag\\
&+\int_0^t \int  \bar{H}(\zeta - v) H(\xi - u)   \bk{\ab(\zeta) - \ab(\xi)} \bk{\be(\zeta) - \ab(\zeta) } : \nabla^2_{\x\y} \varphi\;\d E\d s\notag\\
& +\int_0^t \int  \bar{H}(\zeta - v) H(\xi - u)  \bk{\ab(\zeta) - \ab(\xi)}\bk{\ab(\zeta) - \ab(\xi)} : \nabla^2_{\x\y} \varphi\;\d E\d s.
\label{eq:parabolic_rearrangement}
\end{align}
First, by \eqref{eq:testf_identities}, we have
\begin{align}
&\int_0^t \int  \bar{H}(\zeta - v) H(\xi - u)\big(\A(\xi): \nabla^2_{\x} \varphi +  \B(\zeta): \nabla^2_{\y}\varphi\big)\;\d E\d s\notag\\
&=  -\int_0^t \int  \bar{H}(\zeta - v) H(\xi - u)\big(\A(\xi)- \ab(\xi) \be(\zeta) - \be(\zeta)\ab(\xi) +  \B(\zeta)\big):\nabla_{\x\y}^2 \varphi\;\d E\d s\notag\\
&\,\quad -\int_0^t \int  \bar{H}(\zeta - v) H(\xi - u)\big(\ab(\xi) \be(\zeta) + \be(\zeta)\ab(\xi)\big):\nabla_{\x\y}^2 \varphi\;\d E\d s.\label{3.32a}
\end{align}

Using the chain rule (\ref{eq:art_chainrule})
for kinetic solutions and the symmetry of $\nabla_{\x\y}^2 \varphi$, we have
\begin{align}
&\int_0^t \int \bar{H}(\zeta - v) H(\xi - u) \big(\ab(\xi)\be(\zeta) + \be(\zeta)\ab(\xi) \big) : \nabla^2_{\x\y} \varphi\;\d E\d s\notag\\
&=  \int_0^t \int  \nabla_\x H(\xi - u)\otimes\nabla_\y \bar{H}(\zeta - v): \ab(\xi) \be(\zeta)\,\varphi \;\d E \d s\notag\\
&\quad +\int_0^t \int \nabla_\y \bar{H}(\zeta - v) \otimes \nabla_\x H(\xi - u) :\ab(\xi)\be(\zeta)\,\varphi \;\d E\d s\notag\\
&= -\int_0^t\iint \nabla^2_{\y\x}:\int_{\infty}^u \ab(\xi) \int_{-\infty}^v \be(\zeta) \eta''_\rho(\zeta-\xi)
  \;\d\zeta\,\d  \xi\; J_\theta(\y-\x)\;\d\x\d\y\d s\notag\\
 &\quad - \int_0^t \iint\nabla_{\y\x}^2 : \int_{\infty}^u \int_{-\infty}^v \ab(\xi)\be(\zeta) \eta''_\rho(\zeta-\xi)
   \;\d \zeta\,\d \xi\; J_\theta(\y-\x) \;\d\x\d\y\d s\notag\\
&= -2\int_0^t \iint \nabla_\x\otimes\nabla_\y :\Big(\int_0^u \ab(\xi)\;\d \xi\Big)\Big(\int_0^v \be(\zeta)\;\d \zeta\Big)\varphi(u,v,\x,\y)\;\d\x\d\y\d s,
 \label{eq:aebe_equation}
\end{align}
where we have also used the following fact:
$$
\nabla_\x\cdot\big(\int_r^u \ab(\xi) \eta''_\rho(\zeta-\xi) \;\d \xi\big)
=  \nabla_\x \cdot \big(\int_0^u \ab(\xi) \eta''_\rho(\zeta-\xi) \;\d \xi\big)
\qquad\mbox{for any fixed $r$}.
$$

Next, we employ form (\ref{eq:pdefectmeas}) of the parabolic defect measure with $\varphi \in C^\infty_c(\R^2\times (\T^d)^2)$
to obtain
\begin{align*}
&\int_0^{t} \iint\varphi(\xi,v(\y,s),\x,\y)\;\d \y \,\d n^u(\xi,\x,s) \\
&=\int_0^t  \iint \eta''_\rho(v(\y,s)-u(\x,s)) J_\theta(\y-\x) \Big|\nabla_\x\cdot \int_0^{u(\x,s)} \ab(\zeta) \;\d \zeta\Big|^2 \;\d\x\d\y\d s,
\end{align*}
where we have used that $\nabla_\x\cdot \int_0^{u(\x,s)} \ab(\zeta) \;\d \zeta\in L^2(\Omega \times \T^d\times [0,T])$.

Similarly, we have
\begin{align*}
&\int_0^{t} \iint\int \varphi(u(\x,s),\zeta,\x,\y)\,\d\x \d n^v(\zeta, \y, s)\\
&= \int_0^t  \iint \eta''_\rho(v(\y,s)-u(\x,s)) J_\theta(\y-\x)  \Big|\nabla_\y\cdot \int_0^{v(\y,s)} \mathbf{\be}(\xi) \;\d\xi\Big|^2 \;\d\x\d\y\d s,
\end{align*}
by using $\nabla_\y\cdot \int_0^{v(\y,s)} \mathbf{\be}(\xi) \;\d\xi\in L^2(\Omega \times \T^d\times [0,T])$.

\smallskip
Therefore, we obtain
\begin{align*}
&-\int_0^t \int  \bar{H}(\zeta - v) H(\xi - u)\big(\ab(\xi) \be(\zeta) +\be(\zeta) \ab(\xi)\big): \nabla^2_{\x\y} \varphi\;\d E \d s \\
&\quad  -  \int_0^t \iint \int \varphi(\xi,v,\x,\y) \;\d\y \d n^u(\xi, \x, s)
  - \int_0^t \iint \int \varphi(u,\zeta,\x,\y) \;\d\x\d n^v(\zeta,\y, s)\\
&= 2\int_0^t \nabla_{\x\y}^2 :\Big(\int_0^u \ab(\xi)\;\d \xi\Big) \Big(\int_0^v \be(\zeta)\;\d \zeta\Big)\varphi(u,v,\x,\y)\;\d\x\d\y\d s\\
&\quad - \int_0^t  \iint  \Big(\Big|\nabla_\x\cdot \int_0^u \ab(\zeta) \;\d \zeta\Big|^2
+\Big|\nabla_\y\cdot \int_0^v \be(\xi) \;\d \xi\Big|^2\Big)\varphi(u,v,\x,\y) \;\d\x\d\y\d s
\leq 0.
\end{align*}

Inserting this into \eqref{3.32a} yields
\begin{align}
I^a \leq &\, \int_0^t \int  \bar{H}(\zeta - v) H(\xi - u)\big(\A(\xi) - \ab(\xi) \be(\zeta) - \be(\zeta) \ab(\xi) + \B(\zeta)\big): \nabla^2_{\x\y}\varphi\;\d E\d s\notag \\
=&\, \int_0^t \int  \bar{H}(\zeta - v) H(\xi - u)\big(\ab(\xi)  - \be(\zeta)\big)\big( \ab(\xi) -\be(\zeta)\big):\nabla^2_{\x\y}\varphi\;\d E\d s.
\label{eq:aebeequation2}
\end{align}

Notice that
\begin{align*}
&\big(\be(\zeta)-\ab(\xi)\big)\big(\be(\zeta)-\ab(\xi)\big)\\
&=\bk{\be(\zeta)-\ab(\zeta)}\bk{\be(\zeta)-\ab(\zeta)} + \bk{\be(\zeta)-\ab(\zeta)}\bk{\ab(\zeta) - \ab(\xi)}\\
&\quad\, +\bk{\ab(\zeta) - \ab(\xi)} \bk{\be(\zeta)-\ab(\zeta)} + \bk{\ab(\xi)-\ab(\zeta)}\bk{\ab(\xi) - \ab(\zeta)}.
\end{align*}
Combining this with \eqref{eq:aebeequation2}, we complete the proof of (\ref{eq:parabolic_rearrangement}).

\medskip
These terms in $I^a$ can be estimated by invoking either the boundedness of $\big\|\sqrt{\B}-\sqrt{\A}\big\|_{L^\infty}$
or the continuity of $\ab$ and $\be$ in (\ref{eq:A_alpha}) as follows:
\begin{align}
& \left|\int_0^t \int  \bar{H}(\zeta - v) H(\xi - u) \bk{\be(\zeta) -\ab(\zeta) }\bk{\be(\zeta) - \ab(\zeta) }:\nabla^2_{\x}
   \varphi\;\d E\d s\right|\notag\\
&\leq C \int_0^t \int  \bar{H}(\zeta - v) H(\xi - u) \big\|\sqrt{\B} - \sqrt{\A}\big\|^2_{L^\infty} \theta^{-2}
   J_\theta(\y-\x)\eta''_\rho(\zeta - \xi)\;\d E\d s\notag\\
&\leq C \theta^{-2} \big\|\sqrt{\B}-\sqrt{\A}\big\|^2_{L^\infty}\int_0^t \iint \eta_\rho(v - u) J_\theta(\x - \y)\;\d\x\d\y\d s.\label{eq:rev3}
\end{align}
Using $\gamma_\ab$ as the H\"older exponent of $\ab$, we have the estimates:
\begin{align}
&\left|\int_0^t \int  \bar{H}(\zeta - v) H(\xi - u)  \bk{\ab(\zeta) - \ab(\xi) }\bk{\be(\zeta) - \ab(\zeta) }:\nabla^2_{\x} \varphi\;\d E\d s\right|\notag\\
&\leq  d\big\|\sqrt{\B}-\sqrt{\A}\big\|_{L^\infty}\notag\\
&\quad\,\,\,\times \int_0^t \int  \bar{H}(\zeta - v) H(\xi - u)
 \frac{| \ab(\zeta) -\ab(\xi)|}{|\zeta - \xi|^{\gamma_\ab}}\rho^{\gamma_\ab} |\nabla^2_{\x} J_\theta| \eta''_\rho( \zeta - \xi)\;\d E \d s\notag\\
&\leq d C(\be)\rho^{\gamma_\ab}\theta^{-2}
  \big\|\sqrt{\B} - \sqrt{\A}\big\|_{L^\infty}\int_0^t \iint \eta_\rho(v - u) J_\theta(\y-\x)\;\d\x\d\y\d s,
\label{eq:rev4}
\end{align}
and similarly,
\begin{align*}
&\left|\int_0^t \int  \bar{H}(\zeta - v) H(\xi - u) \bk{\be(\zeta) - \ab(\zeta)} \bk{\ab(\zeta) - \ab(\xi)}:\nabla^2_{\x} \varphi\;\d E\d s\right|\notag\\
&\leq  d C(\ab)\rho^{\gamma_\ab}\theta^{-2}\big\|\sqrt{\B} - \sqrt{\A}\big\|_{L^\infty}\int_0^t \iint \eta_\rho(v - u) J_\theta(\y-\x)\;\d\x\d\y\d s,
\end{align*}
where $C(\ab) \ge \|\ab\|_{C^{\gamma_\ab}}$, and we have used the estimate:
$|\nabla_\x^2 J_\theta(\x)|
\le C \theta^{-2} J_\theta(\x)$.
An analogous estimate also holds: $|\nabla_\x J_\theta(\x)|\le C \theta^{-1} J_\theta(\x)$ --- which will
be used below.
Finally, we have the estimate:
\begin{align}
&\left|\int_0^t \int  \bar{H}(\zeta - v) H(\xi - u)\bk{\ab(\xi) - \ab(\zeta)}\bk{\ab(\xi) - \ab(\zeta)}:\nabla^2_{\x}\varphi\;\d E\right| \;\d s\notag\\
&\leq d C(\ab) \rho^{2\gamma_\ab}\theta^{-2} \int_0^t \iint  \eta_\rho(v - u) J_\theta(\y - \x)\;\d\x\d\y\d s.
\label{eq:rev5}
\end{align}
Combining all the estimates above to conclude \eqref{eq:parabolic_cde}.

\medskip
2. {\it Flux terms}.
Notice that
\begin{align}
&\FF_u(\xi,\x) \cdot \nabla_\x \varphi + \GG_u(\zeta, \y)\cdot \nabla_\y \varphi\notag\\
& = \big(\GG_u(\zeta,\y)  - \FF_u(\zeta, \y)\big)\cdot \nabla_\y \varphi +\big(\FF_u(\zeta,\y)  - \FF_u(\xi, \y)\big)\cdot \nabla_\y \varphi \notag\\
&\quad\, + \big(\FF_u(\xi,\y)  - \FF_u(\xi, \x)\big)\cdot \nabla_\y \varphi +  \FF_u(\xi, \x)\cdot\big( \nabla_\x \varphi +  \nabla_\y \varphi\big),
   \label{eq:3.13c_expanded-a}  \\[2mm]
& D_\y\cdot\GG(\zeta,\y) - D_\x\cdot\FF(\xi,\x) \notag\\
&= \big( D_\y\cdot\GG(\zeta,\y) - D_\y\cdot\FF(\zeta,\y)\big)+\big(D_\y\cdot\FF(\zeta,\y) - D_\x\cdot\FF(\zeta,\x)\big)\notag\\
&\quad\, +\big(D_\x \cdot\FF(\zeta,\x) - D_\x\cdot\FF(\xi,\x)\big).
 \label{eq:3.13c_expanded-b}
\end{align}

First, with condition \eqref{eq:A_F_G},
we have
\begin{align}
 &\left|\int_0^t \int  \bar{H}(\zeta - v)  H(\xi - u) \big(\GG_u(\zeta,\y)  - \FF_u(\zeta, \y)\big)\cdot \nabla_\y \varphi \;\d E\d s\right|\notag\\
 &\,\,\,\leq C \theta^{-1}\|\GG_u - \FF_u\|_{L^\infty}\int_0^t \iint  \eta_\rho(v - u) J_\theta(\y - \x)\;\d\x\d\y \d s,\label{eq:rev1} \\
&\left| \int_0^t \int  \bar{H}(\zeta - v)H(\xi - u)\big(D_\x\cdot\GG(\zeta,\y) - D_\x\cdot\FF(\zeta,\y)\big) \;\varphi_\zeta\;\d E\d s\notag\right| \notag\\
&\,\,\,\leq C \rho^{-1}\|D_\x\cdot (\GG -\FF)\|_{L^\infty}\int_0^t \iint  \eta_\rho(v - u) J_\theta(\y - \x)\;\d\x\d\y\d s. \label{eq:rev2}
\end{align}

Next, for $\Gamma(\xi, \zeta,\y) = |\FF_u(\zeta, \y) - \FF_u(\xi, \y)||\zeta -\xi |^{-\kappa_{F1}}$  with
$$
\Gamma(\xi, \zeta, \y) \leq C\big( |\xi|^{p - 1} + |\zeta|^{p - 1} + 1\big),
$$
consider the following calculation:
\begin{align*}
&\iint \bar{H}(\zeta - v) H(\xi - u) \Gamma(\xi,\zeta,\y)|\zeta - \xi|^{\kappa_{F1}} \eta_\rho''( \zeta - \xi) \;\d\xi\d\zeta\\
&=\int_u^\infty \int_{-\infty}^v \Gamma(\xi,\zeta,\y)|\zeta - \xi|^{\kappa_{F1}} \eta_\rho''(\zeta - \xi) \;\d\zeta\d\xi.
\end{align*}
From the bounds on $\Gamma$ and $\eta_\rho''$, by changing variable $\zeta' = \zeta - \xi$, we further have
\begin{align}
& \int_u^\infty \int_{-\infty}^v\Gamma(\xi,\zeta,\y)|\zeta - \xi|^{\kappa_{F1}} \eta_\rho''(\zeta - \xi) \;\d \zeta\d\xi\notag\\
&=  \int_{u}^{\infty} \int_{-\infty}^{v-\xi} \Gamma(\xi,\zeta' + \xi,\y)|\zeta'|^{\kappa_{F1}} \eta_\rho''(\zeta') \;\d \zeta'\d\xi \notag\\
&\leq  C\rho^{\kappa_{F1}}\int_{u}^\infty \sup_{\{|\zeta'| < \rho,\, \zeta' <  v- \xi\}}\Gamma(\xi, \zeta' + \xi,\y) \;\d \xi \notag\\
&\leq  C \rho^{\kappa_{F1}} \int_u^{v- \rho}\big(|v|^{p - 1} + |\xi|^{p - 1}+1\big) \;\d \xi \notag\\
&\leq  C \rho^{\kappa_{F1}} \big(|(u, v)|^p+1 \big). \label{eq:DHV_est}
\end{align}
Now applying bound (\ref{eq:DHV_est}) yields
\begin{align}
&  \left|\int_0^t \int  \bar{H}(\zeta - v)  H(\xi - u)\big( \FF_u(\zeta, \y) - \FF_u(\xi,\y)  \big)\cdot \nabla_\y \varphi \;\d E\d s\right|\notag\\
&\leq C \int_0^t \iint \big(|(u,v)|^p+1\big)\rho^{\kappa_{F1}} \theta^{-1} J_\theta(\y - \x) \;\d\x\d\y\d s\notag\\
&\leq  C{\rho^{\kappa_{F1}}\theta^{-1}} \int_0^t \int \big(|(u,v)|^p+1\big)\;\d\x\d s. \label{eq:FF_est1}
\end{align}

For the next integral, we use the fact that $\varphi_\zeta=-\varphi_\xi$:
\begin{align}
& \left|\int  \bar{H}(\zeta - v)H(\xi - u)\big(D_\y\cdot\FF(\zeta,\y) - D_\x\cdot\FF(\zeta,\x)\big) \;\varphi_\zeta\;\d E\right|\notag\\
&=  \left|\int  \bar{H}(\zeta - v)\,\pd_\xi H(\xi - u)\big(D_\y\cdot\FF(\zeta,\y) - D_\x\cdot\FF(\zeta,\x)\big) \;\varphi\;\d E\right|\notag\\
&\leq \theta^{\kappa_{F2}}\iint \int \bar{H}(\zeta - v)  (|\zeta|^q+1)\;\eta''_\rho(\zeta - u)\;\d\zeta \; J_\theta(\y - \x)\;\d\zeta\d\x\d\y\notag\\
&\le   C\theta^{\kappa_{F2}} \int \big(|(u,v)|^q + 1\big)\;\d\x.\label{eq:FF_est2}
\end{align}

Furthermore, we have
\begin{align}
&  \left| \int_0^t \int  \bar{H}(\zeta - v)  H(\xi - u)\big(\FF_u(\xi,\y)  - \FF_u(\xi, \x)\big)\cdot \nabla_\y \varphi \;\d E\d s\right|\notag\\
&\leq C\|D_\x \cdot \FF_u\|_{L^\infty} \int_0^t\int  \bar{H}(\zeta - v)  H(\xi - u)\eta''_\rho(\zeta - \xi) J_\theta(\y - \x) \;\d E\d s\notag\\
&\leq C \|D_\x \cdot \FF_u\|_{L^\infty} \int_0^t\iint \eta_\rho( v(\y,s) -u(\x,s) ) J_\theta(\y - \x) \;\d\x\d\y\d s.
\label{eq:FF_est3}
\end{align}

Following (\ref{eq:FF_est3}) above, we again have
\begin{align}
& \left| \int_0^t \int  \bar{H}(\zeta - v)H(\xi - u)\big(D_\x\cdot\FF(\zeta,\x) - D_\x\cdot\FF(\xi,\x)\big) \;\varphi_\zeta\;\d E\d s\right|\notag\\
&\leq  C\int_0^t\int \|D_\x \cdot \FF_u\|_{L^\infty} \bar{H}(\zeta - v)H(\xi - u)\eta''_\rho(\zeta-\xi) J_\theta(\y-\x)\;\d E\d s\notag\\
&= C \|D_\x \cdot \FF_u\|_{L^\infty} \int_0^t\iint \eta_\rho( v(\y,s) -u(\x,s)) J_\theta( \y - x) \;\d\x\d\y\d s.
\label{eq:FF_est4}
\end{align}

Finally, using $\nabla_\x \varphi + \nabla_\y \varphi  = 0$ and adding (\ref{eq:rev1})--(\ref{eq:FF_est4}) together,
we obtain
\begin{align*}
&\left|\int_0^t \int  \bar{H}(\zeta - v)  H(\xi - u)\big(\FF_u(\xi,\x) \cdot \nabla_\x \varphi + \GG_u(\zeta, \y)\cdot \nabla_\y \varphi\big) \;\d E\d s\right|\\
&  + \left|\int_0^t \int \bar{H}(\zeta - v) H(\xi - u) \bk{D_\x \cdot \FF(\xi,\x) - D_\y \cdot \GG(\zeta , \y) } \varphi_\xi \;\d E\d s\right|\\
&\leq  C\big(\|\GG_u-\FF_u\|_{L^\infty}\theta^{-1}
   +\|D_\x\cdot (\GG-\FF)\|_{L^\infty}\rho^{-1}\big)\\
&\quad\,\,\,\,\times  \int_0^t \iint  \eta_\rho(v - u) J_\theta(\y-\x)\;\d\x\d\y\d s\\
&\quad + C\|D_\x \cdot \FF_u\|_{L^\infty} \int_0^t\iint \eta_\rho(v(\y,s)-u(\x,s))  J_\theta(\y-\x)\;\d\x\d\y\d s\\
&\quad + C\big(\rho^{\kappa_{F1}}\theta^{-1} + \theta^{\kappa_{F2}}\big) \int_0^t\int \big(|(u,v)|^p + |(u,v)|^q +1 \big)\;\d\x\d s.
\end{align*}

\smallskip
3. {\it It\^o correction term}.
The It\^o correction integral can be estimated as follows:
\begin{align*}
\Ex[I^\sigma]  &= \frac{1}{2}\Ex\big[\int_0^t \iint\big(\tau(v(\y,s)) - \sigma(u(\x,s))\big)^2 \varphi(u(\x,s),v(\y,s),\x,\y) \;\d\x\d\y\d s\big]\\
&\leq  \Ex\big[\int_0^t \iint \bk{ (\tau(v) - \sigma(v))^2 + (\sigma (v) - \sigma (u))^2}\eta''_\rho(v - u) J_\theta(\y - \x) \;\d\x\d\y\d s\big]\\
&\leq  \Ex\big[\int_0^t \iint\big( \|\tau  - \sigma\|^2_{L^\infty}
+ \frac{(\sigma(v) - \sigma(u))^2}{|v - u|^{2\lambda_\sigma}} \rho^{2\lambda_\sigma} \big)\eta''_\rho(v - u) J_\theta(\y - \x) \;\d\x\d\y\d s\big]\\
&\leq C \rho^{-1}
  \| \tau - \sigma\|_{L^\infty}^2
     \int_0^t\iint  J_\theta(\y - \x)\;\d\y\d\x\d s\\
  &\quad\,\,+ C_\sigma \rho^{2\lambda_\sigma} \Ex\big[ \int_0^t
   \iint  \eta''_\rho(v - u) J_\theta(\y - \x)\;\d\x\d\y\d s\big]\notag\\
&\leq C t\,\big(\rho^{-1}\|\tau  - \sigma\|_{L^\infty}^2+ \rho^{2\lambda_\sigma - 1}\big)|\mathbb{T}^d|.
\end{align*}
In the above, $\sigma(u)$ and $\tau(v)$ are symmetric.

\smallskip
4. {\it Mollification term.}
We now follow the argument in the proof of \cite[Theorem~3.2]{DV2018} closely to obtain the estimate in this step,
which will be further refined in Corollary~\ref{thm:I_eta_refine}.

First we decompose the difference:
\begin{align*}
I^\eta &= \iint  f^+(\xi,\x,t) \bar{g}^+(\xi,\x,t) \;\d\xi\, \d \x
           -\int f^+(\xi,\x,t) \bar{g}^+(\zeta,\y,t)\varphi(\xi,\zeta,\x,\y)\,\d E\\
& = I_1^\eta(t) + I_2^\eta(t),
\end{align*}
where
\begin{align*}
I_1^\eta(t):=&\, \iint  f^+(\xi,\x,t) \bar{g}^+(\xi,\x,t) \;\d\xi\, \d \x \\
&\,-  \iint \int f^+(\xi,\x,t) \bar{g}^+(\xi,\y,t) J_\theta(\x -\y) \;\d \xi \,\d \x \,\d \y,
\end{align*}
and
\begin{align*}
I_2^\eta(t) :=& \iint \int f^+(\xi,\x,t) \bar{g}^+(\xi,\y,t) J_\theta(\x -\y) \;\d \xi \,\d \x \,\d \y \\
  & -\int f^+(\xi,\x,t) \bar{g}^+(\zeta,\y, t) \varphi(\xi,\zeta,\x,\y)\;\d E.
\end{align*}

We first have
\begin{align*}
 |I_1^\eta|
 & = \Big|\iint  f^+(\xi,\x,t) \Big(\bar{g}^+(\xi,\x,t)-\int  \bar{g}^+(\xi,\y,t) J_\theta(\x -\y) \,\d y\Big) \;\d\xi \d \x\Big|\\
 & \le  \iint\Big|\chi_{\bar{g}^+}(\xi,\x,t)-\int  \chi_{\bar{g}^+}(\xi,\y,t) J_\theta(\x -\y) \,\d y\Big| \;\d\xi \d \x,
\end{align*}
where $\chi_{\bar{g}^+}=\bar{g}^+(\xi,\x,t) - \mathds{1}_{\xi < 0}$ is integrable in both $\xi$ and $\x$, which is the kinetic functions used in, e.g., \cite{CP2003}.
From the Lebesgue differentiation theorem, the absolute value in the last line tends to zero as $\theta \to 0$, except on an $\x$-measure zero set.
Since $\chi_{\bar{g}^+}$ is integrable in both $\xi$ and $\x$, we can take this limit outside the $(\xi,\x)$-integrals.
This implies that
\begin{align}\label{eq:I_eta_1}
\lim_{\theta \to 0 }\abs{I^\eta_1} = 0  \qquad\,\,\mbox{almost surely}.
\end{align}

For any test function $\psi \in C^\infty_0(\R)$, we have
\begin{align*}
&\int \iint  \psi''(\zeta - \xi ) f^+(\xi,\x,t) \bar{g}^+(\zeta,\x,t) \;\d\xi\,\d \zeta\, \d \x\\
&= - \int \iint \psi(\zeta - \xi) \d_\xi f^+(\xi,\x,t) \d_\zeta\bar{g}^+(\zeta,\x,t) \, \d \x  ,
\end{align*}
where
both $\d_\xi f^+(\xi,\x,t)$ and $-\d_\zeta\bar{g}^+(\zeta,\x,t)$ have the unit mass for each $(\x, t)$.

From our construction of $\eta_\rho$ in \eqref{3.8a}--\eqref{eq:eta_construction}, we see that, for $\varpi  > -\rho$,
\begin{align*}
 (\varpi + \rho)_+ - \eta_\rho(\varpi)
& = \int_{-\rho}^\varpi \int_r^\infty \eta''_\rho(s) \,\d s\,\d r
= \int_{-1}^\infty \big((\varpi + \rho)_+ \wedge (s \rho + \rho) \big)\eta''(s) \,\d s.
\end{align*}
Taking $\varpi = \zeta - \xi$, we find that, for any $s > -1$,
\begin{align*}
(\zeta + \rho - \xi)_+\wedge (s \rho + \rho)  &\le (\zeta + \rho)_+\wedge (s \rho + \rho)  + (\xi)_+ \wedge(s \rho + \rho) \\
& \le \big(  (\zeta + \rho)_+ + (\xi)_+\big) \wedge \big(2\rho(s + 1)\big).
\end{align*}

Therefore, we obtain that
\begin{align*}
0 &\le \iint f^+(\xi,\x,t) \bar{g}^+(\xi - \rho,\y, t)\;\d\xi- \iint \eta''_\rho(\zeta - \xi) f^+(\xi,\x,t) \bar{g}^+(\zeta,\y,t) \;\d\xi\,\d \zeta\\
& = - \iint \big((\zeta + \rho - \xi)_+ - \eta_\rho(\zeta - \xi)\big) \d_\xi f^+(\xi,\x,t) \d_\zeta\bar{g}^+(\zeta,\y,t) \\
& \le   \int_{-1}^\infty \big(\big|\int (\xi)_+ \d_\xi f^+(\xi,\x,t)\big|
    + \big|\int (\zeta + \rho)_+\d_\zeta \bar{g}^+(\zeta,\y,t)\big|\big) \wedge\big(2\rho(s + 1)\big)\,\eta''(s) \,\d s\\
&\le 2 \rho\int_{-1}^\infty (s + 1) \eta''(s) \,\d s\\
& \le C \rho,
\end{align*}
because $\eta''(s)$ is supported on $[-1,1]$.
Integrating the foregoing against $J_\theta(\x - y)\,\d\x\,\d\y$ yields that
\begin{align}\label{eq:I_eta_2}
\abs{I^\eta_2} \le C\rho \iint J_\theta(\x - \y)\,\d \x \,\d \y = C\rho,
\end{align}
where constant $C$ is in fact deterministic.

Putting together \eqref{eq:I_eta_1} and \eqref{eq:I_eta_2}, we obtain that
\begin{align*}
&\lim_{\theta, \rho \to 0} \abs{\iint  f^+(\xi,\x,t) \bar{g}^+(\xi,\x,t) \;\d\xi\, \d \x  - \int f^+(\xi,\x,t) \bar{g}^+(\zeta,\y,t)\varphi(\xi,\zeta,\x,\y)\,\d E} \\
&= 0
\end{align*}
almost surely.
From the uniform boundedness of $\norm{J_\theta}_{L^1(\T^d)}$ and $\norm{\eta_\rho}_{L^1(\R)}$,
we can move the limit outside the expectation
and conclude that there exists $r = r_t(\theta,\rho)$ such that,   for any fixed $t$,
\begin{align*}
\Ex [|I^\eta|] \le r_t(\theta,\rho)\to 0 \qquad \mbox{as $\theta, \rho \to 0$}.
\end{align*}
This leads to  estimate \eqref{eq:BV_necessity}
 and completes the proof.
\end{pf}

\smallskip
\begin{rem}\label{rem:whole_space}
In the whole-space case ({\it i.e.}, $\T^d$ is replaced by $\R^d$), it is necessary to modify the test function to
$\varphi(\xi, \zeta, \x, \y) = \eta_\rho''(\zeta-\xi) J_\theta(\y-\x) \psi(\frac{\y+\x}{2})$,
where $\psi$ is a non-negative smooth function $\R^d \to \R$ supported on $B_R(\mathbf{0})$.
The terms involving $\|\nabla \psi\|_{L^\infty}$ and $\|\psi\|_{L^1} $ appear respectively in the parabolic and It\^{o} correction terms.
In particular, in the It\^{o} correction and mollification estimates,
$\rho\|\psi\|_{L^1}$ is involved so that, for $R \to \infty$ (so that $\psi \to f(\x) \equiv 1$ pointwise),
one needs $\rho \to 0$ first; otherwise, no estimates are possible without prescribing very specific forms for $\psi$,
or we need to consider the weighted space $L^1(\psi(\x)\,\d \x)$.
\end{rem}

\section{$L^1$--Stability Estimate}\label{sec:I_L1_estimates}
In this section, we
establish the following $L^1$--stability theorem.

\begin{thm}[$L^1$--stability estimate]\label{thm:L1_stability}
Let $u$ and $v$ be kinetic solutions of \emph{\eqref{eq:equation1}}
with initial data $u_0$ and $v_0$, respectively.
Let the nonlinear functions of \emph{\eqref{eq:equation1}} satisfy
assumptions \emph{(\ref{eq:A_F})}--\emph{\eqref{eq:A_alpha}} with $\lambda_\sigma > \frac{1}{2}$.
Then the following $L^1$--stability estimate holds{\rm :}
\begin{equation}\label{4.1a}
\Ex\big[\int (v^+(\x,t) - u^+(\x,t))_+\;\d \x\big]
\leq  \exp\big\{C\|D_\x\cdot\FF_u\|_{L^\infty} t\big\}\,\Ex\big[\int (v_0(\x) - u_0(\x))_+\;\d \x\big],
\end{equation}
where $C$ is a constant depending only on $d$.
\end{thm}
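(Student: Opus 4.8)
The plan is to derive the $L^1$--stability estimate \eqref{4.1a} as the special case of the continuous dependence framework in which $\GG = \FF$, $\B = \A$, and $\tau = \sigma$, so that $v$ and $u$ solve the \emph{same} equation. With this choice, Proposition \ref{thm:flux_parabolic_bound} simplifies dramatically: the terms $\|\sqrt{\B}-\sqrt{\A}\|_{L^\infty}$, $\|\GG_u - \FF_u\|_{L^\infty}$, $\|D_\x\cdot(\GG-\FF)\|_{L^\infty}$, and $\|\tau-\sigma\|_{L^\infty}$ all vanish. Starting from \eqref{3.13a}, the surviving bounds are: $\Ex[I^a] \leq C(\ab)d\theta^{-2}\rho^{2\gamma_\ab}\,\Ex[\int_0^t\iint\eta_\rho(v-u)J_\theta\,\d\x\d\y\d s]$; for the flux term, $\Ex[I^F] \leq C\|D_\x\cdot\FF_u\|_{L^\infty}\Ex[\int_0^t\iint\eta_\rho(v-u)J_\theta\,\d\x\d\y\d s] + C(\rho^{\kappa_{F1}}\theta^{-1}+\theta^{\kappa_{F2}})\Ex[\int_0^t\int(|(u,v)|^p + |(u,v)|^q + 1)\,\d\x\d s]$; the It\^o correction term $\Ex[I^\sigma]\leq Ct\rho^{2\lambda_\sigma-1}$; and the mollification term $\Ex[I^\eta]\leq \sup_{|\h|\leq\theta}\Ex[\int(u(\x+\h,t)-u(\x,t))_+\,\d\x]$.

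The key maneuver is a careful iterated limit in the parameters $\rho\to 0$ and $\theta\to 0$ with the right relative rates. First I would let $\rho\to 0$ for fixed $\theta$. Since $\eta_\rho(r)\to r_+$ monotonically and $(u,v)\in L^p$ by the definition of kinetic solutions (and the linear/polynomial growth hypotheses control the $L^q$ term), the term $C(\rho^{\kappa_{F1}}\theta^{-1}+\theta^{\kappa_{F2}})\Ex[\cdots]$ loses its $\rho^{\kappa_{F1}}\theta^{-1}$ piece, the It\^o correction term $Ct\rho^{2\lambda_\sigma-1}\to 0$ because $2\lambda_\sigma-1>0$, and the parabolic term $C(\ab)d\theta^{-2}\rho^{2\gamma_\ab}\Ex[\cdots]\to 0$ because $2\gamma_\ab > 1 > 0$ (indeed $\gamma_\ab>\tfrac12$ gives even more room). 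What remains after $\rho\to 0$ is
\begin{align*}
\Ex\big[\int (v(\x,t)-u(\x,t))_+\,\d\x\big] \leq &\, \Ex\big[\iint (v_0(\y)-u_0(\x))_+ J_\theta(\y-\x)\,\d\x\d\y\big] \\
&+ C\|D_\x\cdot\FF_u\|_{L^\infty}\int_0^t \Ex\big[\iint (v(\y,s)-u(\x,s))_+ J_\theta(\y-\x)\,\d\x\d\y\big]\,\d s \\
&+ C\theta^{\kappa_{F2}}\Ex\big[\int_0^t\int(|(u,v)|^p + |(u,v)|^q + 1)\,\d\x\d s\big] + \Ex[I^\eta].
\end{align*}
Here $I^0$ contributes via \eqref{3.13b} after $\rho\to 0$: $\int H(\xi-u_0)\bar H(\zeta-v_0)\eta_\rho''(\zeta-\xi)\,\d\xi\d\zeta \to (v_0(\y)-u_0(\x))_+$. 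Then I would let $\theta\to 0$: the term $C\theta^{\kappa_{F2}}\Ex[\cdots]\to 0$ since $\kappa_{F2}>0$ and the integrand is in $L^1(\Omega\times[0,t]\times\T^d)$; the $I^0$-term converges to $\Ex[\int(v_0-u_0)_+\,\d\x]$ by continuity of translation in $L^1$; the $\Ex[I^\eta]$ term converges to $0$ by continuity of translation in $L^1$ applied to $u(\cdot,t)$ (using $u(t)\in L^p(\T^d)\subset L^1(\T^d)$ a.s.); and the running term converges to $C\|D_\x\cdot\FF_u\|_{L^\infty}\int_0^t\Ex[\int(v(\x,s)-u(\x,s))_+\,\d\x]\,\d s$. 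This yields the integral inequality
\begin{align*}
\Ex\big[\int (v(\x,t)-u(\x,t))_+\,\d\x\big] \leq \Ex\big[\int (v_0-u_0)_+\,\d\x\big] + C\|D_\x\cdot\FF_u\|_{L^\infty}\int_0^t \Ex\big[\int (v(\x,s)-u(\x,s))_+\,\d\x\big]\,\d s,
\end{align*}
and Gr\"onwall's inequality then gives \eqref{4.1a}.

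The main obstacle I anticipate is justifying the iterated limits rigorously, particularly that the Gr\"onwall argument closes: one needs the map $s\mapsto \Ex[\int(v(\x,s)-u(\x,s))_+\,\d\x]$ to be (at least) measurable and locally bounded on $[0,T]$, which follows from $u,v\in L^p(\Omega;L^\infty([0,T];L^p(\T^d)))$ and the time-continuity in Lemma \ref{thm:tech_lem} / the remark on continuous representatives, but should be stated carefully; and one needs uniform-in-$(\rho,\theta)$ integrability to pass the expectations through the limits --- this is where the $L^p$-integrability of kinetic solutions and the decay property \eqref{eq:kmeas_decay} of the kinetic/parabolic measures (used implicitly in the bound on $I^a$) do the work. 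A secondary subtlety is that, strictly, \eqref{3.13a} is an inequality only after discarding the nonnegative $m^u$, $m^v$ contributions; since we are only seeking an upper bound on $(v-u)_+$ this is exactly what is wanted, so no loss. I do not expect the flux term's structure to cause trouble here because, with $\GG=\FF$, the only genuinely surviving contribution is the benign $\|D_\x\cdot\FF_u\|_{L^\infty}$ term that feeds Gr\"onwall, which is precisely the heterogeneity penalty anticipated in the introduction.
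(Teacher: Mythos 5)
Your proposal is correct and follows essentially the same route as the paper: specialize Proposition \ref{thm:flux_parabolic_bound} to $\GG=\FF$, $\B=\A$, $\tau=\sigma$, take the limits $\rho\to 0$ first and $\theta\to 0$ second, and close with Gronwall's inequality to produce the factor $\exp\{C\|D_\x\cdot\FF_u\|_{L^\infty}t\}$. The only difference is that you spell out the justifications (dominated convergence, $L^1$-continuity of translation for the $I^0$ and $I^\eta$ terms, measurability for Gronwall) that the paper leaves implicit in its one-line conclusion.
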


\begin{pf}
In this case, $\FF(\cdot,\cdot) = \GG(\cdot,\cdot)$, $\A(\cdot) = \B(\cdot)$, and $\sigma(\cdot) = \tau(\cdot)$.
Then, from Proposition \ref{thm:flux_parabolic_bound}, we obtain
\begin{align*}
\Ex[I^a] \leq & \;C(\ab) \rho^{2\gamma_\ab}\theta^{-2} \Ex\big[\int_0^t \iint  \eta_\rho(v - u) \;\d\x\d\y\d s\big],\\
\Ex[I^F] \leq & \; C\|D_\x \cdot \FF_u\|_{L^\infty} \Ex\big[\int_0^t\iint \eta_\rho(v(\y)-u(\x)) J_\theta(\y-\x) \;\d\x\d\y\d s\big]\notag\\
&+ C\big(\rho^{\kappa_{F1}}\theta^{-1} + \theta^{\kappa_{F2}}\big)
  \Ex\big[\int_0^t\int \big(|(u, v)|^p+ |(u,v)|^q+1\big) \;\d\x\d s\big],\notag\\
\Ex[I^\sigma] \leq & \; C_\sigma  t \rho^{2\lambda_\sigma - 1},\\
\Ex[I^\eta] = &\, o_{\theta , \rho}(1) \to 0 \qquad\mbox{as $\theta,\rho\to 0$}.
\end{align*}
Taking the limits in the order: $\rho \to 0$ first and $\theta\to 0$ second, we obtain estimate:
\begin{equation}\label{4.1}
\begin{aligned}
&\Ex\big[\iint f^+(\xi,\x,t) \bar{g}^+(\xi,\x,t) \;\d \xi\,\d \x\big]\\
&\le\Ex\big[\int f^+(\xi,\x,0)\bar{g}^+(\xi,\x,0)\;\d \x\big]\\
&\quad\,\,+ C\|D_\x \cdot \FF_u\|_{L^\infty} \Ex\big[\int_0^t\iint f^+(\xi,\x,s) \bar{g}^+(\xi,\x,s) \;\d \xi\d \x\d s\big]
\end{aligned}
\end{equation}
for every $t \in [0,T]$.

Taking $f(\xi,\x,0) = H(\xi - u_0)$ and $\bar{g}(\zeta,\y,0) = \bar{H}(\zeta - v_0)$,
we can argue exactly as in \cite[Proposition~2.11]{DV2018} and conclude that the kinetic measure
does not concentrate at $t = 0$, $\mathbb{P}$-almost surely,
and that $f^+(\xi,\x,0) = f(\xi,\x,0)$ and $\bar{g}^+(\zeta,\y,0) = \bar{g}(\zeta, \y,0)$.

Taking $u_0 = v_0$ that leads to
$$
\Ex\big[\int f^+(\xi,\x,0)\bar{g}^+(\xi,\x,0)\;\d \x\big]=0,
$$
we see from \eqref{4.1} that
\begin{align*}
&\Ex\big[\iint f^+(\xi,\x,t) \bar{g}^+(\xi,\x,t) \;\d \xi\,\d \x\big]\\
&\leq   C\|D_\x \cdot \FF_u\|_{L^\infty}\int_0^t \Ex\big[\iint f^+(\xi,\x,s) \bar{g}^+(\xi,\x,s) \;\d \xi\d \x\big]\d s
\end{align*}
for every $t \in [0,T]$. Then it follows from the Gronwall inequality that
$$
\Ex\big[\iint f^+(\xi,\x,t) \bar{g}^+(\xi,\x,t) \;\d \xi\,\d \x\big]=0,
$$
which implies that,
for every $t \in [0,T]$,
$$
f^+(\xi,\x,t) \left(1 - f^+(\xi,\x,t) \right)= 0 \qquad\quad \mbox{$(\omega,\xi,\x)$-almost everywhere}.
$$
This means that, for every $t \in [0,T]$, $f^+$ takes values in $\{0,1\}$  $(\omega,\xi, \x)$-almost everywhere.
Similarly, it can be shown that, for every $t \in [0,T]$, $\bar{g}^+$ also takes values in $\{0,1\}$  $(\omega,\xi, \x)$-almost everywhere.
This allows us to represent $f$ and $\bar{g}$ as the Heaviside functions inside the integrals and to write
$$
\Ex\big[\iint f^+(\xi,\x,t) \bar{g}^+(\xi,\x,t) \;\d \xi\,\d \x\big]
= \Ex\big[\iint (v^+(\x,t) - u^+(\x,t))_+ \;\d \xi\,\d \x\big].
$$
Then bound \eqref{4.1a} follows directly from \ref{4.1} via the Gronwall inequality.
\end{pf}

\smallskip
It follows from Lemma \ref{thm:tech_lem} as in \cite[Corollary~3.3]{DV2018} (also see  \cite[Corollary~12]{DV2010} and  \cite[Corollary~3.4]{DHV2016}) that
\begin{cor}\label{thm:continuous_paths}
Let $u$ be a kinetic solution to \eqref{eq:eq1_parab}. There exists a version of $u$ with almost surely continuous paths in $L^p(\Td)$.
\end{cor}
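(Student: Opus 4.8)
The plan is to follow \cite[Corollary~12]{DV2010} and \cite[Corollary~3.4]{DHV2014}, adapting to $L^p$ with $p\in(1,\infty)$ and to the present heterogeneous flux. \emph{Step 1.} By Lemma~\ref{thm:tech_lem} there are representatives $f^\pm(\xi,\x;t)$ of $H(\xi-u(\cdot,t))=\chi_{\xi\ge u(\cdot,t)}$ that are a.s.\ left- and right-continuous in time, tested against $C^2_c(\R\times\Td)$, and that coincide for all $t$ outside an at most countable set $N$. Since $u\in L^p(\Omega;L^\infty([0,T];L^p(\Td)))$, the family $\{\chi_{\xi\ge u(s)}\}_{s\in[0,T]}$ is relatively weak-$\ast$ compact, and --- by the standard argument of \cite{DV2010,DHV2014,GS2016}, in which the decay \eqref{eq:kmeas_decay} of $m^u+n^u$ on shrinking $\xi$-intervals is used to rule out the development of microscopic oscillations --- every weak-$\ast$ limit of $\chi_{\xi\ge u(s)}$ as $s\to t$ is again an equilibrium kinetic function. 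Hence $f^\pm(\cdot;t)=\chi_{\xi\ge u^\pm(\x,t)}$ for some $u^\pm(t)\in L^p(\Td)$, the maps $t\mapsto u^\pm(t)$ are right/left-continuous into $L^p(\Td)$ for the weak topology, and $u^+(t)=u^-(t)$ for $t\notin N$.

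\emph{Step 2: no jumps.} Fix $t_0$. From \eqref{3.13ba}--\eqref{3.13ca}, the time-jumps of $t\mapsto H^u(\phi)$ come only from the term $\int_0^t\iint\phi_\xi\,\d(m^u+n^u)$; the parabolic defect $n^u$ is absolutely continuous in time by its defining formula \eqref{eq:pdefectmeas}, so the jump at $t_0$ equals $\iint\phi_\xi\,\d m^u|_{\{t_0\}}$, i.e.\ $\LL f^+(t_0)-f^-(t_0),\phi\RR=-\LL\pd_\xi m^u(\cdot,\cdot,\{t_0\}),\phi\RR$. Testing with $\phi(\xi,\x)=a(\x)b(\xi)$, $a\ge0$, $b$ bounded and nondecreasing, the right side is $\ge0$ while the left side equals $\int a(\x)\int_{u^+(\x,t_0)}^{u^-(\x,t_0)}b(\xi)\,\d\xi\,\d\x$; this forces $u^+(\x,t_0)\le u^-(\x,t_0)$ for a.e.\ $\x$, a.s. On the other hand, testing \eqref{eq:definsol} with a renormalization whose derivative approximates $1$ on large $\xi$-intervals (legitimate by \eqref{eq:kmeas_decay}) shows that $t\mapsto\int_{\Td}u(\x,t)\,\d\x=\int_{\Td}u_0\,\d\x+\int_0^t\!\int_{\Td}\sigma(u)\,\d\x\,\d W(s)$ has a.s.\ continuous paths, so $\int u^+(\cdot,t_0)=\int u^-(\cdot,t_0)$. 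Together with $u^+(\cdot,t_0)\le u^-(\cdot,t_0)$ a.e., this yields $u^+(\cdot,t_0)=u^-(\cdot,t_0)$ in $L^p(\Td)$, a.s., for every $t_0$. (Equivalently: $m^u$ charges no time-slice; one may also arrive here via the uniqueness in Theorem~\ref{thm:L1_stability} applied to the problems restarted at $t_0$.)

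\emph{Step 3: strong continuity.} Put $\bar u(t):=u^+(t)$; by Steps~1--2 it represents $u(t)$ for a.e.\ $t$, a.s., and is weakly continuous into $L^p(\Td)$. Testing \eqref{eq:definsol} with $\phi=\beta_R'(\xi)$ for a convex $C^2$ renormalization $\beta_R$ agreeing with $|\xi|^p$ on $|\xi|\le R$ and affine outside (so $\beta_R''\ge0$ is compactly supported) yields an It\^o balance for $t\mapsto\int_{\Td}\beta_R(\bar u(t))\,\d\x$ whose finite-variation part is absolutely continuous in $t$ (the $m^u+n^u$ contribution now carrying no atom, by Step~2) and whose martingale part is continuous; hence $t\mapsto\int_{\Td}\beta_R(\bar u(t))\,\d\x$ is a.s.\ continuous, and letting $R\to\infty$ (using $u\in L^\infty_tL^p_x$ a.s.\ for the tail control) gives a.s.\ continuity of $t\mapsto\|\bar u(t)\|_{L^p(\Td)}$. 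Weak continuity together with continuity of the norm implies strong continuity, since $L^p(\Td)$ is uniformly convex for $p\in(1,\infty)$; thus $\bar u\in C([0,T];L^p(\Td))$ a.s., which is the desired version. The main obstacle is the oscillation-control in Step~1 --- showing that weak-$\ast$ limits of the kinetic functions at arbitrary times stay equilibria, where the defect-measure apparatus and \eqref{eq:kmeas_decay} are essential; Steps~2 and~3 are then routine, the only mildly delicate point being the renormalization limit $R\to\infty$ in Step~3.
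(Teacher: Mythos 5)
The paper itself disposes of this corollary by citation to \cite[Corollary~12]{DV2010} and \cite[Corollary~3.4]{DHV2014}, and your outline (one-sided representatives from Lemma~\ref{thm:tech_lem}, absence of time-atoms of $m^u$, weak continuity plus norm continuity plus uniform convexity of $L^p$) is the same route those references take. However, the one step that carries the real weight is justified incorrectly in your Step~1. The claim that the weak-$\ast$ limits $f^\pm(t_0)$ of $\chi_{\xi\ge u(s)}$ as $s\to t_0^\pm$ are again equilibria, $f^\pm(t_0)=\chi_{\xi\ge u^\pm(t_0)}$, does not follow from \eqref{eq:kmeas_decay}: that condition only controls the mass of $m^u+n^u$ for large $|\xi|$ and cannot prevent oscillation (Young-measure) limits in $s$. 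The actual mechanism in \cite{DV2010,DHV2014} is the uniqueness/reduction rigidity: one runs the doubling-of-variables machinery (here Propositions~\ref{thm:lma_product} and \ref{thm:flux_parabolic_bound}, i.e.\ the estimates behind Theorem~\ref{thm:L1_stability}) with the solution compared against itself at the fixed time $t_0$, using the left/right-continuous representatives, to get $\Ex\iint f^{\pm}(t_0)\,(1-f^{\pm}(t_0))\,\d\xi\,\d\x=0$; only then is $f^\pm(t_0)$ a.e.\ $\{0,1\}$-valued, nondecreasing in $\xi$, hence an equilibrium. Without this, your Step~2 collapses: the jump identity together with $m^u\ge 0$ and the mass identity does \emph{not} force $f^+(t_0)=f^-(t_0)$ for general monotone-in-$\xi$ profiles (one can write down nonequilibrium $f^\pm$ with equal masses satisfying all the sign constraints), so the ordering argument genuinely needs the indicator structure. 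Naming this rigidity step, rather than gesturing at ``ruling out microscopic oscillations,'' is the missing idea.

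A secondary point: in Step~3 the limit $R\to\infty$ requires $\sup_{t\le T}\int_{\{|\bar u(t)|>R\}}|\bar u(t)|^p\,\d\x\to 0$ almost surely, i.e.\ uniform-in-time integrability of $|\bar u(t)|^p$, which does not follow from $\bar u\in L^\infty([0,T];L^p(\Td))$ alone (concentration at some time is not excluded). One needs an estimate at a higher exponent --- e.g.\ data in $L^{p+\delta}$ or $L^\infty$, which is consistent with the paper's own caveat after \eqref{eq:definsol2} --- or one should conclude continuity in $L^{p'}(\Td)$ for $p'<p$ and upgrade only under such extra integrability. The remaining ingredients of your sketch (absolute continuity in time of the $n^u$-contribution, the jump identity, the mass identity $\int u(t)\,\d\x=\int u_0\,\d\x+\int_0^t\int\sigma(u)\,\d\x\,\d W$ --- where for the heterogeneous flux you should note $\int_{\Td}D_\x\cdot\FF(\xi,\x)\,\d\x=0$ for fixed $\xi$ and use the $L^p$ bound to control the large-$|\xi|$ contribution --- and the weak-plus-norm continuity argument) are sound and match the cited proofs.
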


In particular, from the proof of Proposition~\ref{thm:L1_stability} and Lemma~\ref{thm:tech_lem}, at each $t \in [0,T]$, $f^+$ can be represented $(\omega, \xi, \x)$--almost everywhere
as $H(\xi - u^+)$ and is right-continuous and, similarly, $f^-$ can be represented as $H(\xi - u^-)$, $(\omega, \xi,\x)$--almost everywhere
and is left-continuous.
From \eqref{3.12a}, it must be that $u^+ = u^-$ $\mathbb{P}$-almost surely, and $u$ is continuous $[0,T] \to L^p(\T^d)$.

From now on, we can drop the distinction between $u^\pm$ and $u$ (resp. $v^\pm$ and $v$) and simply refer to $u$ (resp. $v$);
we can also write $f^+(\xi,\x,t)$ as $H(\xi - u(\x,t))$ (resp. $\bar{g}^+(\zeta,\y,t)$ as $\bar{H}(\zeta - v(\y,t))$).

\smallskip
\begin{rem}
If $\FF$ is space-translational invariant (so that $D_\x\cdot\FF_u=0$), then we conclude
the familiar $L^1$--contraction estimate\,{\rm :}
\[
\Ex\big[\int (v(\x,t) - u(\x,t))_+ \;\d \x\big] \leq \Ex\big[\int (v_0(\x) - u_0(\x))_+\;\d \x\big].
\]
\end{rem}

\smallskip
\begin{cor}\label{thm:I_eta_refine}
With $I^\eta$ defined as in \eqref{3.13f}, we in fact have the bound{\rm :}
\begin{align}
\Ex[I^\eta] \le  \sup_{|\mathbf{h}| < \theta}\Ex\big[ \int (u(\x,t) - u(\x + \mathbf{h},t))_+\;\d\x\big].
\end{align}
\end{cor}

\begin{pf}
We can now write $I^\eta$ as
\begin{align*}
I^\eta &= \int (v(\x,t) - u(\x,t))_+\, \d \x  - \int H(\xi - u(\x,t)) \bar{H}(\zeta - v(\y,t)) \varphi(\xi,\zeta,\x,\y)\,\d E\\
& = \int (v(\x,t) - u(\x,t))_+\, \d \x  - \iint \eta_\rho(v(\y,t) -  u(\x,t)) J_\theta(\x - \y) \,\d \x\,\d \y.
\end{align*}

Using the basic inequality
$(\cdot)_+ \leq \eta_\rho(\cdot)$,
we obtain
\begin{equation*}
\begin{aligned}
&\Ex\big[\int (v(\x,t ) - u(\x,t ))_+ \;\d \x\big]\\
& = \Ex\big[\iint  (v(\y,t ) - u(\y,t ))_+ J_\theta(\y-\x) \;\d\x\d\y\big]\\
&\leq  \Ex\big[\iint (v(\y,t ) - u(\x,t ))_+  J_\theta(\y-\x) \;\d\x\d\y\big]\\
&\quad    + \Ex\big[\iint (u(\x,t ) - u(\y,t ))_+  J_\theta(\y-\x) \;\d\x\d\y\big]\\
& \leq  \Ex\big[\iint \eta_\rho(v(\y,t ) - u(\x,t ))  J_\theta(\y-\x) \;\d\x\d\y\big]\\
&\quad + \Ex\big[\iint (u(\x,t ) - u(\y,t ))_+  J_\theta(\y-\x) \;\d\x\d\y\big].
\end{aligned}
\end{equation*}
Set $\mathbf{h} := \y - \x$. Then we have
\begin{equation*}
\begin{aligned}
& \Ex\big[\iint (u(\x,t ) - u(\y,t ))_+  J_\theta(\y-\x) \;\d\x\d\y\big]\\
& = \Ex\big[\int \big(\int (u(\x,t ) - u(\x+\mathbf{h},t ))_+\, \d\x\big)J_\theta(\mathbf{h}) \;\d\mathbf{h}\big]\\
& \le \sup_{|\mathbf{h}| < \theta}
\Ex\big[\int (u(\x,t ) - u(\x + \mathbf{h},t ))_+ \;\d\x\big]\int J_\theta(\mathbf{h}) \;\d \mathbf{h}\\
& = \sup_{|\mathbf{h}| < \theta}\Ex\big[ \int (u(\x,t ) - u(\x + \mathbf{h},t ))_+ \;\d\x\big],
\end{aligned}
\end{equation*}
where we have used that $\displaystyle \int J_\theta(\mathbf{h}) \;\d \mathbf{h}=1$.
This completes the proof.
\end{pf}

\section{Fractional $BV$ Estimate} \label{sec:I_BV_estimate}

We now apply Proposition \ref{thm:flux_parabolic_bound} to the pair of two equations:
\begin{align}
\pd_t u =& - \nabla \cdot \FF(u,\x) + \nabla\cdot(\A(u) \nabla u) + \sigma(u) \dot{W},\label{5.1a}\\
\pd_t v =& - \nabla \cdot \FF(v,\x + \h) + \nabla\cdot(\A(v) \nabla v)+ \sigma(v) \dot{W},\label{5.2a}
\end{align}
with initial conditions $u(\x,0) = u_0(\x)$ and $v(\x,0) = u_0(\x + \h)$, respectively,
and derive a fractional $BV$ estimate.
In this case, $\gamma_\ab=\gamma_\be=\gamma$ and $\lambda_\sigma=\lambda_\tau=\lambda$.
With this fractional $BV$ estimate,
we can also refine our continuous dependence estimate.

\begin{thm}[Fractional $BV$ estimate]\label{thm:FBV_theorem}
Let $u$ be a kinetic solution of \emph{(\ref{eq:equation1})} with initial data $u_0$.
Let the nonlinear functions of \emph{(\ref{eq:equation1})} satisfy
assumptions \emph{(\ref{eq:A_F})}--\emph{(\ref{eq:A_alpha})} with $\lambda_\sigma >\frac{1}{2}$.
Then the following fractional $BV$ estimate holds{\rm :}
\begin{align}
&\Ex\big[\int (u(\x + \h,t) - u(\x,t))_+\;\d \x \big]\notag\\
&\leq  \exp\big\{C\|D_\x\cdot\FF\|_{L^\infty} t\big\}\notag\\
&\quad \times \Big(\hat{K}_1(\FF,t) |\h|^{\kappa_{F2}}
   + \Ex\big[\iint (u_0(\y + \h) - u_0(\x))_+J_{|\h|}(\y-\x)\;\d\x\d\y\big]\Big),\label{eq:BV_bound}
\end{align}
where $C$ depends on $d$, and $\hat{K}_1(\FF,t)$ depends on $\|(u_0,v_0)\|_{L^p}$
and is proportional to the H\"older norm of $D_\x \cdot \FF(\cdot, \x)$ in $\x$.

In particular, if $u_0$ is in the $\kappa_{F2}$--Nikolskii space with
$\kappa_{F2}\le 1$, i.e., the functions of bounded $\kappa_{F2}^{-1}$ variation,
then the fractional $BV$ bound holds{\rm :}
\begin{align*}
\Ex\big[|u|_{N^{\kappa_{F2},1}}(t)\big]
\leq \exp\big\{C\|D_\x\cdot\FF_u\|_{L^\infty} t\big\}\big(\hat{K}_1(\FF,t) + E\big[|u_0|_{N^{\kappa_{F2,1}}}\big]\big),
\end{align*}
where $|\cdot|_{N^{\kappa,1}}$ denotes the bounded $\frac{1}{\kappa}$--variation semi-norm,
the Nikolskii semi-norm \eqref{1.2b}.
\end{thm}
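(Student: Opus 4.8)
The plan is to feed the pair \eqref{5.1a}--\eqref{5.2a} into the continuous-dependence machinery of Propositions~\ref{thm:lma_product}--\ref{thm:flux_parabolic_bound}. \emph{Reduction.} First note that $\widetilde u(\x,t):=u(\x+\h,t)$ is itself a kinetic solution of \eqref{5.2a} with initial datum $u_0(\x+\h)$: translating \eqref{5.1a} by $\h$ carries the translation into the spatial slot of $D_\x\cdot\FF$, so $\widetilde u$ and $v$ solve the same Cauchy problem, and the uniqueness contained in Theorem~\ref{thm:L1_stability} (applied with coinciding data to \eqref{5.2a}) gives $v(\x,t)=u(\x+\h,t)$ almost surely. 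It therefore suffices to estimate $\Ex[\int_{\Td}(v(\x,t)-u(\x,t))_+\,\d\x]$.

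\emph{Applying the framework.} I invoke Proposition~\ref{thm:lma_product} with $\varphi=\eta''_\rho(\zeta-\xi)J_\theta(\y-\x)$ at the mollification scale $\theta=|\h|$, keeping $\rho>0$ free. Since now $\A=\B$ and $\sigma=\tau$, parts (i) and (iii) of Proposition~\ref{thm:flux_parabolic_bound} give $\Ex[I^a]\le dC(\ab)\rho^{2\gamma}|\h|^{-2}\Ex[\int_0^t\iint\eta_\rho(v-u)J_{|\h|}\,\d\x\d\y\,\d s]$ and $\Ex[I^\sigma]\le Ct\rho^{2\lambda-1}$, both of which tend to $0$ as $\rho\to0$ by $\gamma,\lambda>\tfrac12$ and the a priori bound $\sup_{0\le s\le T}\Ex\|u(\cdot,s)\|_{L^1(\Td)}<\infty$. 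The flux term must be re-derived, because the translated flux $\GG(\cdot,\y):=\FF(\cdot,\y+\h)$ is not $L^\infty$-close to $\FF$ in the sense of \eqref{eq:A_F_G}; it is only H\"older-close in the spatial variable through \eqref{eq:A_kappa2}. Expanding $\GG_u(\zeta,\y)-\FF_u(\xi,\x)$ and $D_\y\cdot\GG(\zeta,\y)-D_\x\cdot\FF(\xi,\x)$ through the intermediate points $(\zeta,\y+\h),(\zeta,\y),(\xi,\y),(\xi,\x)$ as in \eqref{eq:3.13c_expanded-a}--\eqref{eq:3.13c_expanded-b}: the spatial-slot differences of $D_\x\cdot\FF$ (by $\h$, and by $\y-\x$, which is $\lesssim|\h|$ on $\supp J_{|\h|}$) are controlled by \eqref{eq:A_kappa2} and yield a term $|\h|^{\kappa_{F2}}\hat K_1(\FF,t)$, where $\hat K_1$ is proportional to the H\"older norm of $D_\x\cdot\FF(\cdot,\x)$ in $\x$ and the residual factor $\Ex[\int_0^t\int(|(u,v)|^p+|(u,v)|^q+1)\,\d\x\,\d s]$ is finite and controlled by $\|(u_0,v_0)\|_{L^p}$ via the a priori $L^p$-bound and the growth hypotheses; the H\"older-in-$\xi$ difference of $\FF_u$, paired with $\eta''_\rho$, contributes $\lesssim\rho^{\kappa_{F1}}|\h|^{-1}\to0$; and the spatial-slot differences of $\FF_u$ together with the $\xi$-slot difference of $D_\x\cdot\FF$ are integrated by parts in $\y$ (resp.\ $\zeta$), turning them into the bounded multiplier $D_\x\cdot\FF_u=F^j_{ux_j}\in L^\infty$ of \eqref{eq:A_F} and producing the Gr\"onwall-driving term $C\|D_\x\cdot\FF_u\|_{L^\infty}\Ex[\int_0^t\iint\eta_\rho(v(\y,s)-u(\x,s))J_{|\h|}(\y-\x)\,\d\x\d\y\,\d s]$.

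\emph{Closing the estimate.} Using \eqref{3.13f} for $I^\eta$ rather than Proposition~\ref{thm:flux_parabolic_bound}(iv), the un-mollified term $\Ex[\int(v-u)_+\,\d\x]$ cancels from both sides of \eqref{3.13a}; letting $\rho\to0$ (justified by Lemma~\ref{thm:tech_lem}, Corollary~\ref{thm:continuous_paths}, and dominated convergence) leaves, with $g_\h(s):=\Ex[\iint(v(\y,s)-u(\x,s))_+J_{|\h|}(\y-\x)\,\d\x\d\y]$,
\[
g_\h(t)\le g_\h(0)+|\h|^{\kappa_{F2}}\hat K_1(\FF,t)+C\|D_\x\cdot\FF_u\|_{L^\infty}\int_0^t g_\h(s)\,\d s,
\]
whence Gr\"onwall gives $g_\h(t)\le e^{C\|D_\x\cdot\FF_u\|_{L^\infty}t}\bigl(g_\h(0)+|\h|^{\kappa_{F2}}\hat K_1(\FF,t)\bigr)$, with $g_\h(0)=\Ex[\iint(u_0(\y+\h)-u_0(\x))_+J_{|\h|}(\y-\x)\,\d\x\d\y]$. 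Running this for the translation $\tfrac{\h}{2}$ and averaging through $J_{|\h|/2}$, the convexity of $(\cdot)_+$ and the symmetry of $J$ give $\Ex[\int(u(\x+\h,t)-u(\x,t))_+\,\d\x]\le 2g_{\h/2}(t)$, which is \eqref{eq:BV_bound} up to absolute constants (for $u_0\in BV$, $g_{\h/2}(0)$ and $g_\h(0)$ are comparable). Finally, if $u_0\in N^{\kappa_{F2},1}$ with $\kappa_{F2}\le1$, then $g_\h(0)\le C|\h|^{\kappa_{F2}}\Ex[|u_0|_{N^{\kappa_{F2},1}}]$ by the mollifier bounds; dividing by $|\h|^{\kappa_{F2}}$, using $|w|=(w)_++(-w)_+$, and taking the supremum over $\h\neq0$ yields the asserted Nikolskii bound.

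I expect the crux to be the re-derivation of the flux estimate: hypothesis \eqref{eq:A_F} makes $\FF_u$ close to its spatial translate only \emph{through its divergence}, so no componentwise $L^\infty$ bound is available and one is forced to integrate by parts; the auxiliary terms this spawns — differentiated Heaviside functions $\delta(\zeta-v)\nabla_\y v$ tested against differences of $\FF_u$ — must be reorganized, just as the cross terms in the parabolic estimate $I^a$ are handled via the Chen--Perthame chain rule \eqref{eq:art_chainrule} and absorbed against the defect measures, rather than bounded directly. The secondary, conceptual obstruction is that the It\^o correction term $I^\sigma$ does not allow the doubling scale $\theta=|\h|$ to be sent to $0$ (unlike in the deterministic theory or in Theorem~\ref{thm:L1_stability}), which is why the estimate is obtained first for the mollified difference $g_\h$ and only afterwards transferred to $\int(u(\x+\h,t)-u(\x,t))_+\,\d\x$.
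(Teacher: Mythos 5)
Your architecture is the paper's: set $v(\x,t)=u(\x+\h,t)$, run Propositions \ref{thm:lma_product} and \ref{thm:flux_parabolic_bound} with $\B=\A$, $\tau=\sigma$ so that the parabolic and It\^o terms are $O(\rho^{2\gamma}\theta^{-2})$ and $O(\rho^{2\lambda-1})$ and disappear as $\rho\to0$ with $\theta=|\h|$ fixed, re-derive only the flux term from \eqref{eq:A_F}--\eqref{eq:A_kappa2}, apply Gronwall to the mollified difference, and only afterwards transfer to $\Ex\big[\int(u(\x+\h,t)-u(\x,t))_+\,\d\x\big]$ and to the Nikolskii seminorm (your $\h/2$-averaging transfer is a harmless variant of the paper's two-sided comparison; it merely replaces the initial term at shift $\h$ by the one at shift $\h/2$). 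The genuine gap sits exactly at the step you yourself call the crux: the terms carrying the spatial differences of $\FF_u$ and the $u$-slot difference of $D_\x\cdot\FF$, namely $(\FF_u(\zeta,\y+\h)-\FF_u(\zeta,\y))\cdot\nabla_\y\varphi$, $(\FF_u(\xi,\y)-\FF_u(\xi,\x))\cdot\nabla_\y\varphi$, and $(D_\x\cdot\FF(\zeta,\x)-D_\x\cdot\FF(\xi,\x))\varphi_\zeta$. You propose to integrate by parts in $\y$ (resp.\ $\zeta$) so that only the divergence $D_\x\cdot\FF_u\in L^\infty$ appears, and you concede, without resolving it, that this spawns terms of the form $\delta(\zeta-v(\y))\nabla_\y v$ paired with the $\FF_u$-differences.

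Those leftover terms are fatal as the argument stands. For a kinetic solution, $\nabla_\y v$ is not a controlled object --- a spatial regularity bound on the solution is precisely what Theorem \ref{thm:FBV_theorem} is meant to produce --- and the mechanism you invoke, \emph{reorganize and absorb against the defect measures as with the parabolic cross terms}, has no counterpart here: the kinetic measures $m^u,m^v$ were already discarded by sign in Proposition \ref{thm:lma_product}; the parabolic defect measures \eqref{eq:pdefectmeas} control only $\big|\nabla\cdot\int_0^v\be(\zeta)\,\d\zeta\big|^2$, which vanishes in the degenerate directions and identically in the hyperbolic case $\A\equiv 0$, where the theorem must still hold; and the Chen--Perthame chain rule \eqref{eq:art_chainrule} concerns $\int_0^v\ab(\zeta)\varphi(\zeta)\,\d\zeta$ and has no analogue for the first-order flux. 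The paper never differentiates the Heaviside in $\y$ here: it bounds these three terms directly, using \eqref{eq:A_F} for the $u$-difference of $D_\x\cdot\FF$ (so $|D_\x\cdot\FF(\zeta,\x)-D_\x\cdot\FF(\xi,\x)|\le \|D_\x\cdot\FF_u\|_{L^\infty}|\zeta-\xi|$ with $|\zeta-\xi|\le\rho$ on $\supp\,\eta''_\rho$) and reading the same hypothesis as giving the componentwise control $|\FF_u(\xi,\y)-\FF_u(\xi,\x)|\le C\|D_\x\cdot\FF_u\|_{L^\infty}|\y-\x|$, combined with $|\nabla_\y J_\theta|\le C\theta^{-1}J_\theta$, $|\y-\x|\le\theta$ and $|\h|\theta^{-1}=1$; see \eqref{eq:FF_est3}--\eqref{eq:FF_est4} and the first display in the proof of Theorem \ref{thm:FBV_theorem}. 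This is what produces the Gronwall coefficient $C\|D_\x\cdot\FF_u\|_{L^\infty}(|\h|\theta^{-1}+1)$. If you decline that componentwise reading of \eqref{eq:A_F}, you need an actual new argument for these terms --- the analogy with the parabolic absorption does not supply one --- so, as written, your flux estimate does not close.
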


\begin{pf}
We first notice that, if $u(\x,t)$ solves \eqref{5.1a} with the initial data $u_0(\x)$,
then $v(\z,t)=u(\x+\h,t)$ for $\z=\x +\h$ solves \eqref{5.2a} with the initial data $u_0(\x+\h)$.

As in the $L^1$--stability estimate in \S 4,
choosing $\B = \A$ and $\tau = \sigma$ in Proposition \ref{thm:flux_parabolic_bound}
and using Corollary \ref{thm:I_eta_refine},
we have
\begin{align}
\Ex[I^a]\leq &\; C(\ab) \rho^{2\gamma}\theta^{-2}
   \Ex\big[\int_0^t \iint  \eta_\rho(v - u) J_\theta(\x - \y) \;\d\x\d\y\d s\big],\label{eq:I_rest-a}\\
\Ex[I^\sigma] \leq& \; C_\sigma\rho^{2\lambda - 1},\label{eq:I_rest-b}\\
\Ex[I^\eta] \leq &\;  \sup_{|\h|\leq \theta} \Ex\big[\int (u(\x + \h,t) - u(\x,t))_+ \;\d \x\big].\label{eq:I_rest-c}
\end{align}

Choosing $\GG(\cdot,\cdot) = \FF(\cdot, \cdot + \h)$,
we see from \eqref{eq:3.13c_expanded-a} and assumptions \eqref{eq:A_F}--\eqref{eq:A_kappa2} that
\begin{align*}
&\big|\FF_u(\xi,\x) \cdot \nabla_\x \varphi + \GG_u(\zeta, \y)\cdot \nabla_\y \varphi\big|\notag\\
& \le \left|\big(\FF_u(\zeta,\y + \h)  - \FF_u(\zeta, \y)\big)\cdot \nabla_\y \varphi \right|
  +\left|\big(\FF_u(\zeta,\y)  - \FF_u(\xi, \y)\big)\cdot \nabla_\y \varphi \right|\notag\\
&\quad\,\,+\left| \big(\FF_u(\xi,\y)  - \FF_u(\xi, \x)\big)\cdot \nabla_\y \varphi \right|
     + \big|\FF_u(\xi, \x)\cdot (\nabla_\x\varphi + \nabla_\y \varphi)\big|\\
&\le C \norm{D_\x \cdot \FF_u}_{L^\infty} |\h| \vartheta^{-1} \varphi + C |(\xi, \zeta)|^{p - 1} \rho^{\kappa_{F1}}\theta^{-1} \varphi
+ C\norm{D_\x \cdot \FF_u}_{L^\infty}\varphi,
\end{align*}
where we have used $|\nabla_\y \varphi| \le C \theta^{-1} \varphi$ and $\nabla_\x \varphi + \nabla _\y \varphi = 0$.

Furthermore,  using \eqref{eq:FF_est2},
\begin{align*}
& \left|\int H(\xi - u) \bar{H}(\zeta - v) \big(D_\y\cdot\GG(\zeta,\y) - D_\x\cdot\FF(\xi,\x)\big) \varphi_\zeta \,\d E\right|\notag\\
&\le\left|\int H(\xi - u) \bar{H}(\zeta - v) \big(D_\y\cdot\FF(\zeta,\y + \h) - D_\x\cdot\FF(\xi,\y + \h)\big) \varphi_\zeta \,\d E\right|\\
&\quad\,\, + \left|\int H(\xi - u) \bar{H}(\zeta - v) \big(D_\y\cdot\FF(\xi ,\y + \h) - D_\x\cdot\FF(\xi,\x)\big) \varphi_\zeta \,\d E\right|\\
& \le C \norm{D_\x \cdot \FF}_{L^\infty}  \left| \int  H(\xi - u) \bar{H}(\zeta - v) \varphi(\xi,\zeta,\x,\y) \,\d E\right|\\
&\quad\, + C (\theta + |\h|)^{\kappa_{F2}}\int \big(|(u,v)|^q + 1\big)\;\d\x.
\end{align*}

Let $|\h|, \theta < 1$. Then
\begin{align*}
|I^F| \leq &\; C\|D_\x \cdot \FF_u\|_{L^\infty} \int_0^t\iint \eta_\rho( v(\y,s) - u(\x,s)) \big(|\h|\theta^{-1}+1\big)
 J_\theta(\y - \x) \;\d\x\d\y\d s\notag\\
&\; + C\big(\rho^{\kappa_{F1}}\theta^{-1} + (\theta + |\h|)^{\kappa_{F2}}\big) \int_0^t\int \big(|(u,v)|^p + |(u,v)|^q+1\big) \;\d\x\d s.
\end{align*}
Combining this estimate with \eqref{eq:I_rest-a}--\eqref{eq:I_rest-c}, we have
\begin{align*}
&\Ex\big[\iint \eta_\rho(u(\y + \h, t) - u(\x, t)) J_\theta(\y - \x) \;\d\x\d\y \big]\\
&\leq \Ex\big[\iint \eta_\rho(u_0(\y + \h) - u_0(\x))J_\theta(\y - \x)\;\d\x\d\y \big]\\
&\quad + C\|D_\x\cdot\FF_u\|_{L^\infty}\big(|\h|\theta^{-1} + 1\big)\Ex\big[\int_0^t \iint  \eta_\rho(v - u) J_\theta(\y - \x)\;\d\x\d\y\d s\big] \\
&\quad + C\rho^{2\gamma}\theta^{-2}\Ex\big[\int_0^t \iint  \eta_\rho(v - u) J_\theta(\y - \x)\;\d\x\d\y\d s\big]\\
&\quad + C\big(\rho^{\kappa_{F1}}\theta^{-1} + (\theta + |\h|)^{\kappa_{F2}}\big) \Ex\big[\int_0^t\int \big(|(u,v)|^p + |(u,v)|^q+1\big)
  \;\d\x\d s\big]\\
&\quad  + C_\sigma\rho^{2\lambda - 1}  |\T^d|.
\end{align*}
Next, we apply the Gronwall inequality and use the estimates on $\Ex\big[I^\eta]$ to conclude
the proof by choosing $\theta = |\h|$ and taking $\rho \to 0$.

In particular, if $u(\x,t)$ is in the fractional $BV$ class in $\x$ with index $\kappa$ for any fixed $t>0$, then
$$
\Ex\big[\int \big(u(\x+\h,t) - u(\x,t)\big)_+ \;\d \x\big] \le C |\h|^{\kappa},
$$
which is equivalent to
$$
\Ex\big[\int \big(u(\y+\h,t) - u(\x,t)\big)_+ J_{|\h|}(\x - \y)\;\d \x\d\y\big] \le C |\h|^{\kappa}.
$$
This can be seen as follows: If $\displaystyle \Ex\big[\int \big(u(\x+\h,t) - u(\x,t)\big)_+ \;\d \x\big] \le C |\h|^{\kappa}$,
then
\begin{align*}
&\Ex\big[\iint (u(\y+\h,t) - u(\x,t))_+ J_{|\h|}(\y - \x)\;\d\x\d\y\big]\\
&\leq \Ex\big[\iint \big((u(\y+\h,t) - u(\x + \h,t))_+ \\
 &\qquad\qquad\,\,\, + (u(\x + \h,t)- u(\x,t))_+\big) J_{|\h|}(\y - \x)\;\d\x\d\y\big]\\
&\leq \Ex\big[\iint \big((u(\y,t) - u(\x,t ))_+ J_{|\h|}(\y - \x) \\
 &\qquad\qquad\,\,\,   + (u(\x + \h,t)- u(\x,t))_+ J_{|\h|}(\y - \x)\big)\;\d\x\d\y\big]\\
&\leq   \sup_{\mathbf{z} \in B_{|\h|}(\mathbf{0})} \Ex\big[\int (u(\x + \mathbf{z},t) - u(\x,t))_+\;\d \x \;\int J_{|\h|}(\y - \x)\;\d\y\big]
  + C|\h|^{\kappa}\leq C|\h|^{\kappa}.
\end{align*}
Conversely, if $\displaystyle \Ex\big[\iint (u(\y+\h,t) - u(\x,t))_+ J_{|\h|}(\y - \x)\;\d \x\d\y\big] \le C |\h|^{\kappa}$, then
\begin{align*}
&\Ex\big[\int (u(\x+\h,t) - u(\x,t))_+ \;\d \x\big]\\
&\leq  \Ex\big[\iint  \big((u(\x+\h,t) - u(\y,t) )_+J_\theta(\y - \x)\\
&\qquad\qquad\,\,\, + (u(\y,t) - u(\x,t))_+ J_\theta(\y - \x)\big) \;\d\x\d\y\big]\\
&\leq  C|\h|^\kappa + \sup_{\mathbf{z}\in B_{|\h|}(\mathbf{0})} \Ex\big[\iint (u(\y + \mathbf{z},t) - u(\x,t))_+ J_\theta(\y - \x) \;\d\x\d\y\big]\leq  C |\h|^\kappa.
\end{align*}

Therefore, if $\Ex\big[|u_0|_{N^{\kappa,1}}\big]< \infty$, then $\Ex\big[|u|_{N^{\kappa,1}}\big]<\infty$.
This completes the proof.
\end{pf}

\begin{rem}
If $\kappa_{F2} = 1$, we obtain an actual $BV$ estimate by taking
the supremum ({\it cf.} \cite[Theorem 1.7.2]{Daf2016} and \cite[Definition 1]{Sim1990} for the deterministic case),
whilst sending $\theta = |\h| \to 0$.
In fact, adding to the inequality by the corresponding inequality for $\big(u(\y + \h) - u(\x)\big)_-$, we have
\begin{align}\label{eq:BV_bound2}
\Ex\big[|u|_{BV}(t)\big]
\leq  \exp\big\{C(d)\|D_\x\cdot\FF_u\|_{L^\infty} t\big\}\big(\hat{K}_1(\FF,t) + \Ex\big[|u_0|_{BV}\big]\big).
\end{align}
Finally, in the space-translational invariant case,  $\hat{K}_1(\FF) = 0$ and $\|D_\x\cdot\FF_u\|_{L^\infty} = 0$,
so that the classical $BV$ bound follows:
\begin{align}
\Ex\big[|u|_{N^{\kappa_{F2},1}}(t)\big] \leq \Ex\big[|u_0|_{N^{\kappa_{F2},1}}\big].
\end{align}
In particular, when $\kappa_{F2}=1$,
\begin{align}
\Ex\big[|u|_{BV}(t)\big] \leq \Ex\big[|u_0|_{BV}\big].
\end{align}
\end{rem}

\medskip
\section{Continuous Dependence Estimate}\label{sec:I_continuous_dependence}

A continuous dependence estimate for equations \eqref{eq:equation1}--\eqref{eq:equation2}
is an estimate of form:
\[
\Ex[\|v(\cdot,t) - u(\cdot,t)\|] \leq C(\A,\B,\FF,\GG,\sigma, \tau,u_0,v_0,t)M(\B - \A ,\GG -  \FF, \tau - \sigma,v_0 -  u_0 ,t),
\]
where $M$ tends to zero as the arguments $(\B - \A, \GG -\FF, \tau - \sigma, v_0 - u_0)$ tend to zero,
and $\|\cdot\|$ is a norm or  semi-norm.

To prove the full continuous dependence estimates,
we use our (fractional) $BV$ estimates to refine both the mollification estimates (\ref{eq:BV_necessity})
and the estimates in Proposition \ref{thm:flux_parabolic_bound}.

\begin{thm}\label{thm:continuous_dependence}
Let $u$ be a kinetic solution of \eqref{eq:equation1}
on $\T^d$ with initial data $u_0 \in N^{\kappa,1} \cap L^p$ for $\kappa \geq \kappa_{F2}$.
Let $v$ be a kinetic solution of \eqref{eq:equation2}
on $\T^d$ with initial data $v_0\in L^p$.
Assume that $\FF$ and $\GG$ satisfy \emph{(\ref{eq:A_F})}--\emph{(\ref{eq:A_kappa2})}
and \emph{(\ref{eq:AB_F})}--\emph{(\ref{eq:AB_kappa2})}, respectively.
Let $\sigma$ and $\tau$ satisfy \emph{(\ref{eq:A_sigma})}
and {\em \eqref{eq:AB_sigma}} with $\lambda_\sigma,\lambda_\tau>\frac{1}{2}$,
and let $\A$ and $\B$ satisfy \emph{(\ref{eq:A_alpha})} and {\em \eqref{eq:AB_alpha}}
with $\gamma_{\ab}, \gamma_{\be} > \frac{1}{2}$, respectively.
For any real constants $\rho, \theta>0$, the following continuous dependence estimate holds{\rm :}
\begin{align*}
&\Ex\big[\int (v(\x,t) - u(\x,t))_+\;\d \x\big] \notag\\
&\leq C \rho + C\theta^{\kappa_{F2}} \exp\big\{C\|D_\x\cdot\FF\|_{L^\infty}t \big\}\big(\hat{K}_1(\FF,t) + \Ex\big[|u_0|_{N^{\kappa,1}}\big]\big)\\
&\quad +\exp\{\mathcal{L}t\} \Big(\Ex\big[\iint \eta_\rho(v(\y,0) - u(\x,0)) J_\theta(\y - \x)\;\d\x\d\y\big]  \\
&\qquad\qquad\qquad\, +\big(\rho^{\kappa_{F1}} \theta^{-1} + \theta^{\kappa_{F2}}\big)\hat{K}(u_0,v_0,t)
 +  C t\rho^{-1}\big(\rho^{2\lambda_\sigma} + \|\tau - \sigma\|^2_{L^\infty}\big)\Big),
\end{align*}
where
\begin{align}
\mathcal{L} = &\, C(\ab) \big(\|\sqrt{\B} - \sqrt{\A}\|_{L^\infty}^2 +\rho^{2\gamma_\ab}\big)\theta^{-2}\notag\\
&\, +  C\big(\|\GG_u - \FF_u\|_{L^\infty} \theta^{-1}+\|D_\x \cdot (\GG - \FF)\|_{L^\infty} \rho^{-1}\big)+ C\|D_\x \cdot \FF_u\|_{L^\infty},
\label{6.1a}
\end{align}
with all the constituent differences assumed to be bounded,
 \begin{align*}
 \hat{K}(u_0,v_0, t) &= E\big[\int_0^t \big(\|(u, v)(\cdot,s)\|_{L^p}^p + 1\big) \;\d s\big]\\
   &\leq \exp\big\{C_0T\,\Ex[\|(u_0,v_0)\|_{L^p}^p]\big\} \qquad \mbox{for $t\in [0, T]$},
 \end{align*}
 and $C_0$ is a constant depending on $\FF, \GG, \A, \B,\sigma, \tau, d, T$, and $|\mathbb{T}^d|$.

In particular, for $u_0 \in BV \cap L^p$ and $\kappa_{F2}=1$, we can choose $\mu < \kappa_{F1}$ and set
\begin{align}
\rho^\mu = \theta
=t^{\frac{\mu}{2}}\big( \big\|(\GG_u - \FF_u, \sqrt{\B} - \sqrt{\A})\big\|_{L^\infty}
  + \big\|(\tau - \sigma, D_\x\cdot  (\GG - \FF))\big\|_{L^\infty}^\mu\big)
\label{6.2}
\end{align}
to yield that there exists  a constant $C>0$, depending on $T>0$, such that
\begin{align*}
&\Ex \big[\int (v(\x,t) - u(\x,t))_+\;\d \x\big] \notag\\
&\leq C
 \Ex\big[\int (v_0(\x) - u_0(\x))_+\;\d \x\big] \\
&\quad
 +C\Big(\big\|(\GG_u - \FF_u, \sqrt{\B} - \sqrt{\A})\big\|_{L^\infty}+
   \big\|(\tau - \sigma,D_\x\cdot (\GG - \FF))\big\|_{L^\infty}^\mu\Big)^r
\end{align*}
for any $0 < \mu < \kappa_{F1}$,
where $r:=\min\{\frac{\kappa_{F1}}{\mu}-1, \, \frac{2\lambda_\sigma - 1}{\mu},\,1,\,\frac{1}\mu\}$.
\end{thm}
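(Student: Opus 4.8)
The plan is to feed the term-by-term bounds of Proposition~\ref{thm:flux_parabolic_bound} into the comparison inequality \eqref{3.13a} of Proposition~\ref{thm:lma_product}, to absorb the mollification term $I^\eta$ using the fractional $BV$ estimate of Theorem~\ref{thm:FBV_theorem}, and to close a Gronwall loop in the running quantity $X(t):=\Ex\big[\iint\eta_\rho(v(\y,t)-u(\x,t))\,J_\theta(\y-\x)\,\d\x\,\d\y\big]$; the explicit second estimate then follows by optimizing the free parameters $\rho,\theta$. It is worth stressing at the outset that this works only because the a priori fractional $BV$ bound is already available: for the heterogeneous flux, $I^\eta$ cannot be controlled by $L^1$-stability alone, which is precisely where the stochastic heterogeneous theory diverges from the translation-invariant one.

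\emph{Gronwall step.} Substituting the definition \eqref{3.13f} of $I^\eta$ into \eqref{3.13a} turns it into $X(t)\le\Ex[I^0]+\Ex[I^a]+\Ex[I^F]+\Ex[I^\sigma]$. Inserting parts (i)--(iii) of Proposition~\ref{thm:flux_parabolic_bound}, all contributions proportional to $\int_0^t\iint\eta_\rho(v-u)J_\theta\,\d\x\,\d\y\,\d s=\int_0^t X(s)\,\d s$ — the parabolic coefficient $C(\ab)(\|\sqrt{\B}-\sqrt{\A}\|_{L^\infty}^2+\rho^{2\gamma_\ab})\theta^{-2}$ (after an elementary Young merging of the mixed term in \eqref{eq:parabolic_cde}), and the flux coefficients $C\|D_\x\cdot\FF_u\|_{L^\infty}$ and $C(\|\GG_u-\FF_u\|_{L^\infty}\theta^{-1}+\|D_\x\cdot(\GG-\FF)\|_{L^\infty}\rho^{-1})$ from \eqref{eq:flux_cde} — collect into the constant $\mathcal L$ of \eqref{6.1a}, leaving $X(t)\le a(t)+\mathcal L\int_0^t X(s)\,\d s$ with
\[
a(t)=\Ex[I^0]+C(\rho^{\kappa_{F1}}\theta^{-1}+\theta^{\kappa_{F2}})\hat K(u_0,v_0,t)+Ct\rho^{-1}(\rho^{2\lambda_\sigma}+\|\tau-\sigma\|_{L^\infty}^2),
\]
where $\hat K$ absorbs the moment terms $\Ex[\int_0^t\int(|(u,v)|^p+|(u,v)|^q+1)]$ (using $q\le p$ on $\T^d$), controlled by the standard $L^p$-moment estimate for kinetic solutions, which is the source of the stated exponential bound for $\hat K$. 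Gronwall's lemma then gives $X(t)\le a(t)e^{\mathcal L t}$, $a$ being non-decreasing in $t$ up to the fixed-$T$ moment constant.

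\emph{Mollification step, general estimate, and optimization.} By part (iv) of Proposition~\ref{thm:flux_parabolic_bound}, $\Ex[I^\eta]\le\sup_{|\h|\le\theta}\Ex[\int(u(\x+\h,t)-u(\x,t))_+\,\d\x]$, and Theorem~\ref{thm:FBV_theorem} applies here exactly because $u_0\in N^{\kappa,1}$ with $\kappa\ge\kappa_{F2}$: with $|\h|\le\theta<1$ and $\kappa\ge\kappa_{F2}$ (so $|\h|^\kappa\le|\h|^{\kappa_{F2}}$ and $|u_0|_{N^{\kappa_{F2},1}}\le|u_0|_{N^{\kappa,1}}+C\|u_0\|_{L^1}$), bound \eqref{eq:BV_bound} gives $\Ex[I^\eta]\le C\theta^{\kappa_{F2}}\exp\{C\|D_\x\cdot\FF\|_{L^\infty}t\}(\hat K_1(\FF,t)+\Ex[|u_0|_{N^{\kappa,1}}])$. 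Combining with $\Ex[\int(v-u)_+\,\d\x]=X(t)+\Ex[I^\eta]$ and $\Ex[I^0]=\Ex[\iint\eta_\rho(v(\y,0)-u(\x,0))J_\theta(\y-\x)\,\d\x\,\d\y]$ (see \eqref{3.13b}) produces the first displayed inequality, the extra $C\rho$ tracking the $O(\rho)$ discrepancy $|\eta_\rho(\cdot)-(\cdot)_+|\le\rho$ incurred in passing between $\eta_\rho$- and $(\cdot)_+$-quantities. For the explicit estimate, specialize to $u_0\in BV\cap L^p$, $\kappa_{F2}=1$, and make the scaling choice \eqref{6.2}, $\rho^\mu=\theta=t^{\mu/2}\Delta$ with $\Delta=\|(\GG_u-\FF_u,\sqrt{\B}-\sqrt{\A})\|_{L^\infty}+\|(\tau-\sigma,D_\x\cdot(\GG-\FF))\|_{L^\infty}^\mu$, so $\rho=t^{1/2}\Delta^{1/\mu}$. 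Then: (a) $\mathcal L t$ stays bounded by a constant depending on $T$ (and the fixed data), since each summand $\|\sqrt{\B}-\sqrt{\A}\|^2\theta^{-2}t$, $\rho^{2\gamma_\ab}\theta^{-2}t$, $\|\GG_u-\FF_u\|\theta^{-1}t$, $\|D_\x\cdot(\GG-\FF)\|\rho^{-1}t$, $\|D_\x\cdot\FF_u\|t$ reduces to a non-negative power of $t$ times a non-negative power of $\Delta$ once one uses $\Delta\ge\|\sqrt{\B}-\sqrt{\A}\|_{L^\infty}$, $\Delta\ge\|D_\x\cdot(\GG-\FF)\|_{L^\infty}$, etc.\ (this forces $\mu$ small, in particular $0<\mu<\kappa_{F1}$ and $\mu<1\wedge2\gamma_\ab$); (b) each residual error $C\rho$, $\theta^{\kappa_{F2}}\hat K_1$, $\theta^{\kappa_{F2}}\Ex[|u_0|_{BV}]$, $\rho^{\kappa_{F1}}\theta^{-1}\hat K$, $t\rho^{2\lambda_\sigma-1}$, $t\rho^{-1}\|\tau-\sigma\|^2$ becomes $C(T)\Delta^\beta$ with $\beta\in\{\tfrac1\mu,\,1,\,\tfrac{\kappa_{F1}}\mu-1,\,\tfrac{2\lambda_\sigma-1}\mu\}$ (the last pair using $\Delta^{1/\mu}\ge\|\tau-\sigma\|_{L^\infty}$), which yields the $\min$-exponent; and (c) $e^{\mathcal L t}\Ex[I^0]$ is bounded by $C\,\Ex[\int(v_0-u_0)_+\,\d\x]$ plus a $\theta\,\Ex[|u_0|_{BV}]+C\rho$ error, via $\eta_\rho(v_0(\y)-u_0(\x))\le(v_0(\y)-u_0(\y))_+ +|u_0(\y)-u_0(\x)|+\rho$ and the $BV$ bound on the $u_0$-translate — crucially placed on $u_0$, since $v_0$ is only $L^p$. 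Collecting (a)--(c) gives the stated estimate.

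\emph{Main obstacle.} The delicate point is the parameter balance in (a)--(b): the requirements genuinely compete, since $\rho^{\kappa_{F1}}\theta^{-1}$ forces $\rho\ll\theta$, while $\rho^{-1}\|D_\x\cdot(\GG-\FF)\|_{L^\infty}$ inside $\mathcal L$ and $\theta^{-2}\|\sqrt{\B}-\sqrt{\A}\|_{L^\infty}^2$ forbid $\rho,\theta$ from being small relative to the data differences; the coupling $\rho^\mu=\theta$ with the specific prefactor in \eqref{6.2} is the reconciliation keeping $\mathcal L t$ of order one while every residual error vanishes as the data differences tend to $0$. A secondary, conceptual difficulty, already flagged above, is that the whole scheme rests on first having the fractional $BV$ estimate in hand to control $I^\eta$; without it, one is stuck exactly as in the heterogeneous deterministic/stochastic mismatch described in the introduction.
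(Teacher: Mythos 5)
Your treatment of the first (general) estimate matches the paper's: the Gronwall loop in $X(t)=\Ex\big[\iint\eta_\rho(v-u)J_\theta\,\d\x\d\y\big]$ with the coefficients of Proposition \ref{thm:flux_parabolic_bound} collected into $\mathcal L$, plus the refinement of the mollification term via Theorem \ref{thm:FBV_theorem} (using $\kappa\ge\kappa_{F2}$) and the $O(\rho)$ bookkeeping at $t=0$, is exactly Steps 1--2 of the paper's proof, and that part of your argument is fine.

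For the explicit estimate, however, there is a genuine gap in your step (a). You keep the parabolic contributions inside the Gronwall exponent and claim that, under the scaling \eqref{6.2}, every summand of $\mathcal L t$ becomes a non-negative power of $t$ times a non-negative power of $\Delta$. This fails for the term $C(\ab)\rho^{2\gamma_\ab}\theta^{-2}t$: with $\theta=t^{\mu/2}\Delta$ and $\rho=\theta^{1/\mu}=t^{1/2}\Delta^{1/\mu}$ it equals $t^{\,1+\gamma_\ab-\mu}\,\Delta^{\,2(\gamma_\ab-\mu)/\mu}$, which blows up as the data differences $\Delta\to0$ whenever $\mu>\gamma_\ab$. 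Since the hypotheses only give $\gamma_\ab>\tfrac12$ while the theorem asserts the estimate for every $0<\mu<\kappa_{F1}$ (e.g. $\gamma_\ab=0.6$, $\kappa_{F1}=1$, $\mu=0.9$), the factor $e^{\mathcal L t}$ is then of size $\exp\{c\,\Delta^{-2(1-\gamma_\ab/\mu)}\}$ and swamps the polynomially small residual errors, so your (a)--(c) do not deliver the stated bound; your added restriction ``$\mu<1\wedge2\gamma_\ab$'' is both absent from the statement and insufficient (your route needs $\mu\le\gamma_\ab$). The paper avoids precisely this through its Step 3 ``refinement of the continuous dependence estimate'': since $u_0\in BV$ and $\kappa_{F2}=1$, Theorem \ref{thm:FBV_theorem} gives $\Ex[|u(\cdot,t)|_{BV}]<\infty$, and an integration by parts in $\x$ replaces $\theta^{-1}\iint\eta_\rho(v-u)J_\theta\,\d\x\d\y$ by $|u(t)|_{BV}|\T^d|$ in \eqref{eq:rev3}--\eqref{eq:rev5} and \eqref{eq:rev1}, so the parabolic and $\GG_u-\FF_u$ contributions leave the Gronwall exponent and reappear as additive errors $C(\ab)\big(\|\sqrt\B-\sqrt\A\|_{L^\infty}^2\theta^{-1}+\rho^{2\gamma_\ab}\theta^{-1}+\|\GG_u-\FF_u\|_{L^\infty}\big)\Ex\big[\int_0^t|u(\cdot,s)|_{BV}\,\d s\big]$ (see \eqref{eq:BV_refinement3}--\eqref{eq:BV_refinement1}). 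The residual $\rho^{2\gamma_\ab}\theta^{-1}$ then only requires $2\gamma_\ab\ge\mu$, which is exactly where the standing assumption $\gamma_\ab>\tfrac12$ is used, while the exponent reduces to $C\|D_\x\cdot\FF_u\|_{L^\infty}t+C\|D_\x\cdot(\GG-\FF)\|_{L^\infty}\rho^{-1}t\lesssim t+t^{1/2}$. To close your proof you need to incorporate this BV-based integration-by-parts refinement (or otherwise justify the restriction on $\mu$), not merely cite the scaling.
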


\begin{pf} We divide the proof into three steps.

\smallskip
1. {\it Refinement of the mollification estimate}:
With the assumption that $u_0\in N^{\kappa,1}$, we return to the mollification estimate \eqref{eq:BV_necessity} and \eqref{eq:BV_bound}:
\begin{align}
\Ex[I^\eta]
\leq C\theta^{\kappa_{F2}} \exp\big\{C\|D_\x\cdot\FF\|_{L^\infty}t \big\}\big(\hat{K}_1(\FF,t) + \Ex\big[|u_0|_{N^{\kappa_{F2},1}}\big]\big).
\label{eq:BV_necessity2}
\end{align}
Moreover, when $t = 0$,
\begin{align*}
&\Ex\big[\big|\iint \eta_\rho(v_0(\y) - u_0(\x)) J_\theta(\y - \x)
\;\d \x\,\d \y - \int (v_0(\y) - u_0(\y))_+\;\d \y\big|\big]\\
&\leq  C \rho + C\theta^{\kappa_{F2}} \Ex\big[|u_0|_{N^{\kappa_{F2},1}}\big].
\end{align*}

\smallskip
2. {\it Continuous dependence estimate}:
We now prove the general continuous dependence estimate for the initial data in $N^{\kappa_{F2},1}$. From
Proposition \ref{thm:flux_parabolic_bound}, we have the estimates:
\begin{align*}
&\Ex\big[\iint \eta_\rho(v(\y, t) - u(\x,t)) J_\theta(\y - \x)\;\d\x\d\y\big] \\
&\leq \Ex\big[\iint \eta_\rho(v_0(\y) - u_0(\x)) J_\theta(\y - \x)\;\d\x\d\y\big]\\
&\quad + C\|D_\x \cdot \FF_u\|_{L^\infty}\Ex\big[\int_0^t\iint \eta_\rho(v - u) J_\theta(\y - \x)\;\d\x\d\y\d s\big]\\
&\quad + \Big(C(\ab)\theta^{-2} \big(\|\sqrt{\B} - \sqrt{\A}\|_{L^\infty}^2 +\rho^{2\gamma_\ab}\big)\\
&\qquad\,\,\,\,\,    + C\big(\|\GG_u - \FF_u\|_{L^\infty} \theta^{-1}
  + \|D_\x \cdot (\GG - \FF)\|_{L^\infty} \rho^{-1}\big)\Big)\\
&\qquad\qquad \, \times   \Ex\big[\int_0^t \iint \eta_\rho(v - u) J_\theta(\y - \x) \;\d\x\d\y\d s\big]\\
&\quad  + Ct \big(\rho^{-1}\|\sigma - \tau\|_{L^\infty}^2 + \rho^{2\lambda_\sigma - 1}\big)|\mathbb{T}^d|
  +\big(\rho^{\kappa_{F1}} \theta^{-1} + \theta^{\kappa_{F2}}\big)\hat{K}(u_0,v_0,t).
\end{align*}

Applying the Gronwall inequality to the preceding calculation, we have
\begin{align*}
&\Ex\big[\iint\eta_\rho(v(\y,t) - u(\x,t)) J_\theta(\y - \x)\;\d\x\d\y\big] \notag\\
&\leq e^{\mathcal{L} t} \Big(\Ex\big[\iint \eta_\rho(v(\y,0) - u(\x,0)) J_\theta(\y - \x)\;\d\x\d\y\big]  \\
&\qquad\,\,\,\,\,\,\, + \big(\rho^{\kappa_{F1}} \theta^{-1} + \theta^{\kappa_{F2}}\big)\hat{K}(u_0,v_0,t)
 +  C t\big(\rho^{2\lambda_\sigma - 1} + \rho^{-1}\|\tau - \sigma\|^2_{L^\infty}\big)\Big),
\end{align*}
where $\mathcal{L}$ is defined by \eqref{6.1a}.
Now we apply the mollification estimate (\ref{eq:BV_necessity2}) to obtain
\begin{align*}
&\Ex\big[\int (v(\x,t) - u(\x,t))_+\;\d \x\big] \notag\\
&\leq C \rho + C\theta^{\kappa_{F2}} \exp\big\{C\|D_\x\cdot\FF\|_{L^\infty}t \big\}\big(\hat{K}_1(\FF,t) + \Ex\big[|u_0|_{N^{\kappa_{F2},1}}\big]\big)\\
&\quad +\exp\{\mathcal{L}t\} \Big(\Ex\big[\iint\eta_\rho(v(\y,0) - u(\x,0)) J_\theta(\y - \x)\;\d\x\d\y\big]  \\
&\qquad\qquad\quad\quad\,\, + C\big(\rho^{\kappa_{F1}} \theta^{-1} + \theta^{\kappa_{F2}}\big)\hat{K}(u_0,v_0,t)
 +  C t \rho^{-1}\big(\rho^{2\lambda_\sigma} +\| \tau -\sigma\|^2_{L^\infty}\big)\Big).
\end{align*}

\smallskip
3. {\it Refinement of the continuous dependence estimate}:
Next, we consider the $BV$ case.
Assuming that $\kappa_{F2} = 1$, we can further refine the estimate.

Since $\Ex\big[|u(\cdot, t)|_{BV}\big]$ is now bounded, we can refine the estimates in
Proposition \ref{thm:flux_parabolic_bound}.
Let $P \in L^\infty$ be some generic placeholder.
Then integrating by parts yields
\begin{align*}
&\left|\int H(\xi - u) \bar{H}(\zeta - v) P(\xi,\zeta) \cdot \nabla_\x \varphi(\xi,\zeta,\x,\y)\;\d E\right|\\
&= \left|\int H(\xi - u) \bar{H}(\zeta - v) P(\xi,\zeta) \cdot \nabla_\x J_\theta(\y - \x)  \eta''_\rho(\zeta-\xi)\;\d E\right|\\
&= \left| \iint  \int  \bar{H}(\zeta - v)\nabla_\x u\cdot P(u,\zeta) J_\theta(\y - u)\eta''_\rho(\zeta-\xi)
  \; \d\zeta\d\x\d\y \right|\\
&\leq  \|P\|_{L^\infty} \iint \eta'_\rho(v - u) J_\theta(\y - \x)  |\nabla_\x u| \;\d\x\d\y\\
&\leq |\mathbb{T}^d| \|P\|_{L^\infty}  |u(t)|_{BV},
\end{align*}
where we have used $\eta_\rho''\ge 0$, $J_\theta\ge 0$, and the boundedness of $\eta'_\rho$.
This means that
\[
\theta^{-1} \iint \eta_\rho (v - u) J_\theta(\y - \x)\;\d\x\d\y
\]
in (\ref{eq:rev3})--(\ref{eq:rev5}) can be replaced by $|\mathbb{T}^d| |u(t)|_{BV}$
to avoid an application of the Gronwall inequality (which puts $\theta^{-1}$ in an exponent)
and an exponential penalization in time
here (which comes from estimate \eqref{eq:BV_bound} on $|u(t)|_{N^{\kappa_{F2},1}}$ instead).

In particular, we have
\begin{align}
&\left|\int_0^t \int  \bar{H}(\zeta - v) H(\xi - u) \bk{\be(\zeta) -\ab(\zeta) }\bk{\be(\zeta) - \ab(\zeta) } :\nabla^2_{\x} \varphi\;\d E\d s\right|\notag\\
&\quad \leq  d\big\|\sqrt{\B} - \sqrt{\A}\big\|^2_{L^\infty} \theta^{-1}\int_0^t |u(\cdot,s)|_{BV}\;\d s, \label{eq:BV_refinement3}\\[2mm]
& \int_0^t \int  \bar{H}(\zeta - v) H(\xi - u)  \bk{\ab(\zeta) - \ab(\xi) }\bk{\be(\zeta) - \ab(\zeta) }:\nabla^2_{\x} \varphi\;\d E \d s\notag\\
&\quad \leq  C(\ab,d)\big\|\sqrt{\B} - \sqrt{\A}\big\|_{L^\infty}\rho^{\gamma_\ab} \theta^{-1}\int_0^t |u(\cdot,s)|_{BV}\;\d s,\label{eq:BV_refinement4}\\[2mm]
& \int_0^t \int  \bar{H}(\zeta - v) H(\xi - u)  \bk{\ab(\xi) - \ab(\zeta)}\bk{\ab(\xi) - \ab(\zeta)}:\nabla^2_{\x} \varphi\;\d E\d s\notag\\
&\quad \leq  C(\ab,d) \rho^{2\gamma_\ab} \theta^{-1}\int_0^t |u(\cdot,s)|_{BV}\;\d s, \label{eq:BV_refinement5}
\end{align}
in place of (\ref{eq:rev3})--(\ref{eq:rev5}).

Similarly, we have
\begin{align}
 &\left|\int_0^t \int  \bar{H}(\zeta - v)  H(\xi - u)\big(\GG_u(\xi,\x)  - \FF_u(\xi, \x)\big)\cdot \nabla_\x \varphi \;\d E\d s\right|\notag\\
 &\leq C\|\GG_u-\FF_u\|_{L^\infty} \int_0^t |u(\cdot,s)|_{BV}\;\d s,\label{eq:BV_refinement1}
 \end{align}
in place of \eqref{eq:rev1}.

As in Step 2 above,
using Proposition \ref{thm:flux_parabolic_bound}, we arrive at the bound:
\begin{align*}
&\Ex\big[\iint \eta_\rho(v(\y, t) - u(\x,t)) J_\theta(\y - \x)\;\d\x\d\y\big] \\
&\leq \Ex\big[\iint \eta_\rho(v_0(\y) - u_0(\x)) J_\theta(\y - \x)\;\d\x\d\y\big]\\
&\quad + C\|D_\x \cdot \FF_u\|_{L^\infty}\Ex\big[\int_0^t\iint \eta_\rho(v - u) J_\theta(\y - \x)\;\d\x\d\y\d s\big]\\
&\quad +  C(\ab) \big(\|\sqrt{\B} - \sqrt{\A}\|_{L^\infty}^2 \theta^{-1}
   + \rho^{2\gamma_\ab} \theta^{-1}  + \|\GG_u - \FF_u\|_{L^\infty} \big) \Ex\big[\int_0^t |u(\cdot,s)|_{BV} \,\d s\big]\\
&\quad  + C\|D_\x \cdot (\GG - \FF)\|_{L^\infty} \rho^{-1} \Ex\big[\int_0^t \iint \eta_\rho(v - u) J_\theta(\y - \x) \;\d\x\d\y\d s\big]\\
&\quad  + Ct \big(\rho^{-1}\|\tau - \sigma\|_{L^\infty}^2 + \rho^{2\lambda_\sigma - 1}\big)|\mathbb{T}^d|
  + C\big(\rho^{\kappa_{F1}} \theta^{-1} + \theta^{\kappa_{F2}}
  \big)\hat{K}(u_0,v_0,t).
\end{align*}
Estimating the mollification and $BV$ terms, we derive a continuous dependence estimate for
$$
 \Ex\big[\iint \eta_\rho(v(\x, t) - u(\x,t))\;\d\x\big]
$$
as before.
Since $2\gamma_\ab >1$, we can choose $\mu < \kappa_{F1}$ and set $\rho$ and $\theta$ as in \eqref{6.2} to complete the proof.
\end{pf}

\begin{rem}
Estimate \eqref{eq:BV_refinement1} can also be applied to \eqref{eq:FF_est1} if $\FF_u(\xi,\x) - \FF_u(\zeta, \x)$ is assumed
to be uniformly bounded, replacing $\theta^{-1}$ by $\int_0^t |u(\cdot, s)|_{BV}\;\d s$ in  \eqref{eq:FF_est1}.
\end{rem}

\begin{rem} The case that  $\A$ depends on $\x$, {\it i.e.}, $\A=\A(u,\x)$, behaves differently,
and additional difficulties present themselves.
In particular, for the $BV$--estimate, in order to make a sense of the calculations,
one might take the $i$th derivative of the entire equation (at the bulk, non-kinetic level) and test it against $\eta'_\rho(\pd_i u)$.
One cannot easily propose an assumption on $\A(u,\x)_{x_i}$ by which to bound the terms:
\begin{align*}
\int \eta''_\rho(u_{x_i}) \A(u,\x)_{x_i}: (\nabla u \otimes \nabla u_{x_i}) \;\d \x,
\end{align*}
since the second derivatives inevitably appear in the estimates.
\end{rem}

\section{Existence of Stochastic Kinetic Solutions}\label{sec:I_existence}

In this section, we employ the continuous dependence estimate to establish the existence
of stochastic kinetic solutions.
In order to achieve proper energy estimates in $L^p(\mathbb{T}^d)$, we require
the assumption that
\begin{align*}
|D_\x \cdot \FF (u,\x)|\leq C \big(|u|+1\big).
\end{align*}

\begin{rem}\label{rem:whole_space_d}
With reference to Remark \ref{rem:whole_space},
it is possible to extend this result to $L^p(\mathbb{R}^d)$, since only the $L^1$--stability is actually used.
\end{rem}

\subsection{Convergence in $\ep$}

Let $u_0^\ep$ be a collection of the initial data functions that tend to $u_0$ in $L^1_\omega L_\x^1$.

We show here that there is a subsequence of the corresponding viscosity kinetic solutions $u^\ep$
(see
\ref{sec:lwp_parabolic}
for the well-posedness of solutions with almost surely continuous paths in $L^p(\T^d)$),
which converges to a unique stochastic kinetic solution.
From the continuous dependence estimates, we conclude that the kinetic solutions $u^\ep$ and $u^{\ep'}$ of
\begin{align*}
\pd_t u^\ep = -\nabla \cdot \FF(u^\ep, \x) + \nabla \cdot \big((\A(u^\ep) + \ep\mathbf{I}) \nabla u^\ep\big) + \sigma(u^\ep) \dot{W},
\end{align*}
and
\begin{align*}
\pd_t u^{\ep'} = -\nabla \cdot \FF(u^{\ep'}, \x) + \nabla \cdot\big((\A(u^{\ep'}) + \ep'\mathbf{I}) \nabla u^{\ep'}\big) + \sigma(u^{\ep'}) \dot{W},
\end{align*}
satisfy
\begin{align*}
&\Ex \big[\int |u^{\ep'}(\x,t) - u^\ep(\x,t)|\;\d \x\big] \notag\\
&\leq C
\Big(\Ex\big[\int |u^{\ep'}_0(\x) - u_0^\ep(\x)|\;\d \x\big]
+ |\sqrt{\ep} - \sqrt{\ep'}|^{\min\{\frac{\kappa_{F1}}{\mu}-1,\,\frac{2\lambda_\sigma-1}{\mu},\,\kappa_{F2},\, \frac{1}{\mu}\}}\Big)
\end{align*}
for $0<\mu<\kappa_{F1}$.
Then we conclude that
\begin{align}\label{eq:L1_tx}
\Ex\big[\|u^{\ep'}(\x,t) - u^\ep(\x,t)\|_{L^1(\mathbb{T}^d\times [0,T])}\big] \to 0 \qquad\mbox{as $\ep, \ep'\to 0$}.
\end{align}

Moreover, including the martingale part in order to estimate the difference in the uniform norm in time, we have
\begin{align}
&\Ex \big[\sup_{t \in [0,T]}\int |u^{\ep'}(\x,t) - u^\ep(\x,t)|\;\d \x\big] \nonumber\\
&\leq C
\Big(\Ex\big[\int |u^{\ep'}_0(\x) - u_0^\ep(\x)|\;\d \x\big]
+ |\sqrt{\ep} - \sqrt{\ep'}|^{\min\{\frac{\kappa_{F1}}{\mu}-1,\,\frac{2\lambda_\sigma-1}{\mu},\,\kappa_{F2},\, \frac{1}{\mu}\}}\Big)\nonumber\\
&\quad\,\, + \Ex \big[\sup_{t \in [0,T]} \Big|\int_0^t \int \big(\sigma(u^\ep) - \sigma(u^{\ep'})\big)\,\d \x \,\d W\Big|\big].\label{eq:convergenceL1_xC_t}
\end{align}

By the Burkholder--Davis--Gundy inequality and Young's inequality,
\begin{align*}
& \Ex \big[ \sup_{t \in [0,T]} \left|\int_0^t \int \big(\sigma(u^\ep) - \sigma(u^{\ep'})\big)\,\d \x \,\d W\right|\big]\\
 & C \le  \Ex \big[\Big|\int_0^T \int\frac{ \big|\sigma(u^\ep) - \sigma(u^{\ep'})\big|^2}{|u^\ep - u^{\ep'}|^{2\lambda_\sigma}}
   |u^\ep - u^{\ep'}|^{2\lambda_\sigma}\,\d \x \,\d s\Big|^{1/2}\big]\\
 & \le C_\sigma \Ex\big[\Big|\int_0^T \int |u^\ep - u^{\ep'}|^{2\lambda_\sigma}\,\d \x \,\d s\Big|^{1/2}\big]\\
 & \le C_\sigma \Ex  \big[\int_0^T \int |u^\ep - u^{\ep'}|^{2\lambda_\sigma - 1}\,\d \x \,\d s\big] + \frac12 \Ex \big[\sup_{t \in [0,T]} \int |u^\ep - u^{\ep'}| \,\d \x\big]\\
  & \le C_{\sigma, T} \Big(\Ex  \big[\int_0^T \int |u^\ep - u^{\ep'}|\,\d \x \,\d s\big]\Big)^{2\lambda_\sigma - 1}
  + \frac12 \Ex \big[\sup_{t \in [0,T]} \int |u^\ep - u^{\ep'}| \,\d \x\big].
\end{align*}
The final inequality is the result of Jensen's inequality, as $2 \lambda_\sigma  - 1 \in (0,1)$.

Since $2 \lambda_\sigma - 1 > 0$,
from \eqref{eq:convergenceL1_xC_t},
we have the following bound:
\begin{align*}
&\frac12 \Ex \big[\sup_{t \in [0,T]}\int |u^{\ep'}(\x,t) - u^\ep(\x,t)|\;\d \x\big] \notag\\
&\leq C
\Big(\Ex\big[\int |u^{\ep'}_0(\x) - u_0^\ep(\x)|\;\d \x\big]
+ |\sqrt{\ep} - \sqrt{\ep'}|^{\min\{\frac{\kappa_{F1}}{\mu}-1,\,\frac{2\lambda_\sigma-1}{\mu},\,\kappa_{F2},\, \frac{1}{\mu}\}}\Big)\notag\\
&\quad\,  + f(\ep,\ep'),
\end{align*}
where
$$
f(\ep,\ep') :=C_{\sigma, T} \Big(\Ex\big[\int_0^T \int |u^\ep - u^{\ep'}|\,\d \x \,\d s\big]\Big)^{2\lambda_\sigma - 1} \to 0
$$
as $\ep,\ep' \to 0$, by \eqref{eq:L1_tx}.

Finally, we conclude that
\begin{align*}
\Ex\big[\sup_{t \in[0,T]}\|u^{\ep'}(\x,t) - u^\ep(\x,t)\|_{L^1(\mathbb{T}^d)}\big] \to 0 \qquad\mbox{as $\ep, \ep'\to 0$}.
\end{align*}
That is,
the approximate solution sequence $\{u^\ep\}$ is a Cauchy sequence in $L^1_\omega C_t L^1_\x$ so that there is a
subsequence (still denoted) $\{u^\ep\}$ converging to a process $u$ with almost surely continuous paths in $L^1(\T^d)$.

\subsection{Existence Theorem}
With the convergence of $\{u^\ep\}$ obtained in \S 7.1, we can
follow \cite{CDK2012} to conclude the  following existence theorem:

\begin{thm}[Existence of stochastic kinetic solutions]
Let assumptions \emph{(\ref{eq:A_F})}--\emph{(\ref{eq:A_alpha})} hold.
Then there exists a unique stochastic kinetic solution of
equation \eqref{eq:eq1_parab} with initial data $u_0\in L^1$.
In particular, if initial data $u_0\in L^p \cap N^{\kappa,1}$,
then the stochastic kinetic solution $u(\cdot, t)\in L^p \cap N^{\kappa,1}$
for each $t>0$.
\end{thm}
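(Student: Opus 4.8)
The plan is to construct the solution as the limit of the viscosity kinetic solutions $u^\ep$ along the subsequence produced in \S 7.1, to verify that this limit is a stochastic kinetic solution in the sense of Definition~\ref{def:ksolution}, and to obtain uniqueness directly from the $L^1$--stability of Theorem~\ref{thm:L1_stability}, following \cite{CDK2012,DV2010,DHV2014}. First I would fix mollified initial data $u_0^\ep\to u_0$ (e.g. $u_0^\ep = u_0 * \rho_\ep$, which also converges in $L^p$ and satisfies $|u_0^\ep|_{N^{\kappa,1}}\le|u_0|_{N^{\kappa,1}}$ when $u_0$ lies in these spaces) and record the uniform a priori bounds. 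Applying It\^o's formula to $\|u^\ep(\cdot,s)\|_{L^p(\T^d)}^p$ and using the linear growth of $D_\x\cdot\FF$ and $\sigma$ together with the non-positivity of the parabolic and artificial-viscosity contributions gives
\[
\Ex\big[\sup_{s\in[0,T]}\|u^\ep(\cdot,s)\|_{L^p(\T^d)}^p\big]
 + \Ex\Big[\int_0^T\!\!\int_{\T^d}\Big(\ep|\nabla u^\ep|^2 + \big|\nabla\cdot\!\int_0^{u^\ep}\ab(\zeta)\,\d\zeta\big|^2\Big)\d\x\,\d s\Big] \le C
\]
uniformly in $\ep$, with $C$ depending only on $\Ex[\|u_0\|_{L^p}^p]$, $T$, and the structural constants. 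Since $u^\ep\to u$ a.e. almost surely, Fatou's lemma transfers these bounds to $u$, yielding $u\in L^p(\Omega\times[0,T];L^p(\T^d))\cap L^p(\Omega;L^\infty([0,T];L^p(\T^d)))$ and $\nabla\cdot\int_0^u\ab(\zeta)\,\d\zeta\in L^2(\Omega\times\T^d\times[0,T])$, which is condition (i) of Definition~\ref{def:ksolution}; uniform integrability upgrades the $L^1$--convergence of \S 7.1 to convergence in $L^r$ for every $r<p$.

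Next I would pass to the limit in the kinetic identity \eqref{eq:definsol} for a fixed $\varphi\in C^\infty_c(\R,\T^d\times[0,T))$. The kinetic dissipation measures $\ep|\nabla u^\ep|^2\delta(\xi-u^\ep)$, the parabolic defect measures $\A(\xi):(\nabla u^\ep\otimes\nabla u^\ep)\,\delta(\xi-u^\ep)$, and the It\^o correction measures $\tfrac12\sigma^2(\xi)\,\delta(\xi-u^\ep)$ have uniformly bounded expected total mass by the estimate above, and are tight in the $\xi$--variable by the analogue of \eqref{eq:kmeas_decay}, so along a further common subsequence they converge weakly-$*$ as $\mathfrak{M}_b^+$-valued random variables to limits $\bar m$, $\bar n$, and $p^u$; the a.e. convergence of $u^\ep$ and continuity of $\sigma$ identify $p^u = \tfrac12\sigma^2(\xi)\delta(\xi-u)$. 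Because $\nabla\cdot\int_0^{u^\ep}\ab(\zeta)\,\d\zeta\rightharpoonup\nabla\cdot\int_0^u\ab(\zeta)\,\d\zeta$ weakly in $L^2$, weak lower semicontinuity gives $\bar n\ge n^u$ as measures, where $n^u$ is the parabolic defect measure \eqref{eq:pdefectmeas}; I then define the kinetic measure $m^u:=\bar m + (\bar n - n^u)\ge 0$, so that $\bar m + \bar n - p^u = m^u + n^u - p^u$, and the predictable-representative property of $m^u$ is inherited from that of the pre-limit (adapted, smooth) kinetic dissipation measures. The remaining terms of \eqref{eq:definsol} pass to the limit: the flux and second-order terms and the It\^o correction integral by dominated convergence using the growth hypotheses and the uniform $L^p$ bound, the artificial term $\ep\iint H(\xi-u^\ep)\Delta\varphi$ by boundedness of $H$, and the stochastic integral $\int_0^t\int\sigma(u^\ep)\varphi(u^\ep,\cdot,s)\,\d\x\,\d W(s)\to\int_0^t\int\sigma(u)\varphi(u,\cdot,s)\,\d\x\,\d W(s)$ in $L^2(\Omega)$ by the It\^o isometry. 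Condition (ii), the Chen--Perthame chain rule \eqref{eq:art_chainrule}, holds for $u$ because $\nabla\cdot\int_0^u\ab(\zeta)\,\d\zeta\in L^2$, exactly as in \cite{CP2003}. Hence $u$ is a stochastic kinetic solution.

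Uniqueness follows from Theorem~\ref{thm:L1_stability}: if $u$ and $\tilde u$ are kinetic solutions with the same initial data $u_0$, applying \eqref{4.1a} with $\FF=\GG$, $\A=\B$, $\sigma=\tau$, and $u_0=v_0$ gives $\Ex[\int(\tilde u-u)_+\,\d\x]=0$, and by symmetry $u=\tilde u$ in $L^1(\Omega\times\T^d)$ for each $t$; Corollary~\ref{thm:continuous_paths} then upgrades this to equality of the continuous-in-time representatives. For the refined statement, the $L^p$ conclusion is the uniform energy bound above passed to the limit, while the $N^{\kappa,1}$ conclusion (for the relevant range of $\kappa$, in particular $\kappa=\kappa_{F2}$) follows by applying the fractional $BV$ estimate of Theorem~\ref{thm:FBV_theorem} at the viscosity level --- which gives $\Ex[|u^\ep(\cdot,t)|_{N^{\kappa_{F2},1}}]\le C$ uniformly in $\ep$ since $|u_0^\ep|_{N^{\kappa_{F2},1}}$ is bounded --- and then invoking Fatou's lemma. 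The main obstacle is the compactness argument for the defect measures in the second paragraph: extracting the weak-$*$ limits $\bar m,\bar n$ along a single subsequence compatible with the a.e. convergence of $u^\ep$ and the $L^2$--weak convergence of $\nabla\cdot\int_0^{u^\ep}\ab(\zeta)\,\d\zeta$, proving that the excess $\bar n-n^u$ is a genuine nonnegative measure so that it can be absorbed into an admissible (in particular predictable) kinetic measure $m^u$, and ensuring that the stochastic-integral term and all the measure terms converge simultaneously.
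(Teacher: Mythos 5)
Your proposal follows essentially the same route as the paper: vanishing-viscosity approximation with mollified initial data, the Cauchy/convergence result of \S 7.1 obtained from the continuous dependence estimates, verification that the a.e.\ limit is a stochastic kinetic solution, uniqueness from the $L^1$--stability of Theorem \ref{thm:L1_stability}, and the energy plus fractional $BV$ estimates for the $L^p\cap N^{\kappa,1}$ propagation. The only difference is that you spell out the defect-measure limit passage (weak-$*$ compactness, lower semicontinuity of the parabolic defect, absorption of the excess into the kinetic measure) which the paper delegates to the cited arguments of Feng--Nualart and Chen--Ding--Karlsen, and that part of your argument is the standard one.
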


\begin{pf}
For any fixed $\ep$, we can mollify $u_0$ into $u_0^\ep \in C^\infty$ so that
$\Ex[\|u_0^\ep\|_{H^s}^2]$ is bounded for any $s$ and
\[
\Ex\big[\|u_0^\ep\|_{L^p}^p\big]
\leq C\,\Ex\big[\|u_0\|_{L^p}^p\big]<\infty,
\]
where $C>0$ is a constant independent of $\ep>0$.
Then, as in \cite{CDK2012}, using the arguments of \S 4 of Feng--Nualart \cite{FN2008},
together with the convergence results in \S 7.1, we can conclude that
there is a convergent subsequence $u^\ep(\x,t)$ that converges {\it a.e.} almost surely to $u(\x,t)$
that is a stochastic kinetic solution. The $L^1$--stability of stochastic kinetic solutions implies the
uniqueness of the solution.

\medskip
In particular, if $\Ex\big[\|u_0\|_{L^p}^p\big]  + \Ex\big[|u_0|_{N^{\kappa,1}}\big] < \infty$,
by the continuous dependence estimates, we conclude that
$$
\sup_{t>0}\big(\Ex\big[\|u(\cdot, t)\|_{L^p}^p\big]+\Ex\big[|u(\cdot,t)|_{N^{\kappa,1}}\big]\big)<\infty.
$$
\end{pf}

\section{Temporal Fractional $BV$ Regularity of Stochastic Kinetic Solutions}

In this section, we prove that the stochastic kinetic solution is
of fractional $BV$ regularity in time.

\begin{thm} Let $u_0(\cdot)\in L^p \cap N^{\kappa_1,1}$ for some $\kappa_1>0$.
Let $D_\x\cdot\FF$  have linear growth in $u$ and $\kappa_2$--H\"{o}lder in $\x$ for some $\kappa_2>0$.
Let $\sigma$ have linear growth, and let the entries of $\A$ have polynomial growth in $u$.
Then there exists $\beta>0$ depending on $\kappa_1$ and $\kappa_2$ such that, for any $T>0$, there is $C_T>0$ so that
\begin{align*}
\Ex\big[\int_0^{T - \Delta t}\int (u(\x,t + \Delta t) - u(\x,t))_+ \;\d\x\d t\big] \leq C_T (\Delta t)^{\beta}
\qquad\mbox{for any $\Delta t\in (0, 1)$}.
\end{align*}
\end{thm}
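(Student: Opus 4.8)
The plan is to transfer the spatial fractional $BV$ regularity of $u(\cdot,t)$ — already available — into temporal regularity: one bounds the time increment $u(\cdot,t+\Delta t)-u(\cdot,t)$ in a weak (negative-order) spatial norm directly from the equation, picking up a power $(\Delta t)^{1/2}$ from the noise and the diffusion, and then interpolates this against the uniform-in-time Nikolskii bound to recover an $L^1_{\x}$ rate. To set up, since $u_0\in L^p\cap N^{\kappa_1,1}$, Theorem~\ref{thm:FBV_theorem} (applied with $\kappa=\kappa_1$; the flux exponent there is the $\kappa_{F2}$ of \eqref{eq:A_kappa2}, i.e. $\kappa_2$) gives, with $\kappa_s:=\min\{\kappa_1,\kappa_2\}>0$,
\[
\sup_{t\in[0,T]}\Ex\big[\,|u(\cdot,t)|_{N^{\kappa_s,1}}\,\big]\le C_T<\infty .
\]
I would also use the $L^p$-energy bound $\sup_{t\in[0,T]}\Ex\big[\|u(\cdot,t)\|_{L^p}^p\big]<\infty$, the finiteness of the parabolic defect mass $\Ex\big[\int_0^T\|\nabla\!\cdot\!\int_0^{u(\cdot,s)}\ab(\zeta)\,\d\zeta\|_{L^2(\Td)}^2\,\d s\big]<\infty$ (Definition~\ref{def:ksolution} and \eqref{eq:pdefectmeas}), and the representative of $u$ in $C([0,T];L^p(\Td))$ from Corollary~\ref{thm:continuous_paths}. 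Writing $\am(\zeta):=\int_0^\zeta\ab(r)\,\d r$, so that $\A(u)\nabla u=\ab(u)\,\nabla\!\cdot\!\am(u)$ with $\nabla\!\cdot\!\am(u)\in L^2$, a kinetic solution is a weak solution of \eqref{eq:eq1_parab} (equivalently, pass to the limit in the weak formulation of the viscous approximations of \S\ref{sec:I_existence}; cf. \cite{DV2010,DHV2014}), so for every $\psi\in C^2(\Td)$ and $0\le t\le t+\Delta t\le T$, almost surely,
\begin{align*}
\int_{\Td}\big(u(\x,t+\Delta t)-u(\x,t)\big)\psi(\x)\,\d\x
&=\int_t^{t+\Delta t}\!\!\int_{\Td}\Big(\FF(u,\x)\cdot\nabla\psi-\big(\ab(u)\,\nabla\!\cdot\!\am(u)\big)\cdot\nabla\psi\Big)\,\d\x\,\d s\\
&\quad+\int_t^{t+\Delta t}\!\!\int_{\Td}\sigma(u)\,\psi\,\d\x\,\d W(s).
\end{align*}

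Next I would take the expectation of the modulus and estimate the three terms: the flux term by $\|\nabla\psi\|_{L^\infty}\,\Delta t\,\Ex[\sup_s\|\FF(u(\cdot,s),\cdot)\|_{L^1}]$; the diffusion term, after Cauchy--Schwarz in $s$, by $\|\nabla\psi\|_{L^\infty}(\Delta t)^{1/2}\,\big(\Ex\int_t^{t+\Delta t}\|\ab(u)\,\nabla\!\cdot\!\am(u)\|_{L^1}^2\,\d s\big)^{1/2}$, which is finite by the $L^p$-bound (this is where the polynomial growth of $\A$ is used, forcing $p$ large enough) together with the parabolic-defect bound; and the stochastic term, by the It\^o isometry, by $\|\psi\|_{L^\infty}|\Td|^{1/2}(\Delta t)^{1/2}\big(\sup_s\Ex\|\sigma(u(\cdot,s))\|_{L^2}^2\big)^{1/2}$, which is finite because $\sigma$ has linear growth. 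For $\Delta t\in(0,1)$ this yields
\begin{equation}\label{eq:tstar}
\Ex\Big[\,\Big|\int_{\Td}\big(u(\x,t+\Delta t)-u(\x,t)\big)\psi(\x)\,\d\x\Big|\,\Big]\le C\big(\|\nabla\psi\|_{L^\infty}+\|\psi\|_{L^\infty}\big)\,(\Delta t)^{1/2},
\end{equation}
with $C$ depending only on $T,d,|\Td|$ and the growth constants of $\FF,\A,\sigma$.

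To pass from \eqref{eq:tstar} to an $L^1_{\x}$ estimate, I would fix $\delta\in(0,1)$, take the mollifier $J_\delta$ from \eqref{3.11}, set $g:=u(\cdot,t+\Delta t)-u(\cdot,t)$, and split $\|g\|_{L^1(\Td)}\le\|g-J_\delta*g\|_{L^1}+\|J_\delta*g\|_{L^1}$. The first piece is handled by $\|v-J_\delta*v\|_{L^1}\le\sup_{|\z|\le\delta}\|v(\cdot-\z)-v\|_{L^1}\le\delta^{\kappa_s}|v|_{N^{\kappa_s,1}}$ (using \eqref{1.2b} with $p=1$) applied to $v=u(\cdot,t+\Delta t)$ and $v=u(\cdot,t)$, so its expectation is $\le 2C_T\delta^{\kappa_s}$. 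For the second piece I would use Tonelli and then \eqref{eq:tstar} with the deterministic test function $\psi=J_\delta(\cdot-\x)$, for which $\|\psi\|_{L^\infty}\le C\delta^{-d}$ and $\|\nabla\psi\|_{L^\infty}\le C\delta^{-d-1}$ uniformly in $\x$, to get
\[
\Ex\big[\|J_\delta*g\|_{L^1(\Td)}\big]=\int_{\Td}\Ex\big[\,|(J_\delta*g)(\x)|\,\big]\,\d\x\le C\,|\Td|\,\delta^{-d-1}(\Delta t)^{1/2}.
\]
Combining, $\Ex\big[\|g\|_{L^1(\Td)}\big]\le 2C_T\delta^{\kappa_s}+C|\Td|\,\delta^{-d-1}(\Delta t)^{1/2}$; choosing $\delta=(\Delta t)^{1/(2(\kappa_s+d+1))}$ gives $\Ex\big[\|u(\cdot,t+\Delta t)-u(\cdot,t)\|_{L^1(\Td)}\big]\le C_T(\Delta t)^{\beta}$ with $\beta:=\dfrac{\kappa_s}{2(\kappa_s+d+1)}$, uniformly in $t\in[0,T-\Delta t]$. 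Finally, since $(r)_+\le|r|$ and integrating in $t$,
\[
\Ex\Big[\int_0^{T-\Delta t}\!\!\int_{\Td}\big(u(t+\Delta t,\x)-u(t,\x)\big)_+\,\d\x\,\d t\Big]\le\int_0^{T-\Delta t}\Ex\big[\|u(\cdot,t+\Delta t)-u(\cdot,t)\|_{L^1(\Td)}\big]\,\d t\le C_T(\Delta t)^{\beta},
\]
which is the claim, with $\beta=\dfrac{\min\{\kappa_1,\kappa_2\}}{2\,(\min\{\kappa_1,\kappa_2\}+d+1)}>0$, depending on $\kappa_1,\kappa_2$ (and on $d$).

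The two points I expect to be delicate are the following. First, in the increment estimate the diffusion contribution must be seen to be genuinely $O((\Delta t)^{1/2})$: this rests on the square-root/chain-rule structure $\A(u)\nabla u=\ab(u)\,\nabla\!\cdot\!\am(u)$ together with the finiteness of the parabolic defect mass, and it forces $p$ to be taken large enough relative to the polynomial growth of $\A$ so that $\ab(u)\,\nabla\!\cdot\!\am(u)\in L^1$ with the moments needed for Cauchy--Schwarz in $\omega$. Second, one must upgrade the weak-norm bound \eqref{eq:tstar} to a true $L^1_{\x}$ bound \emph{without} interchanging $\Ex$ with a supremum over test functions — this is precisely what the Tonelli identity $\Ex\big[\|J_\delta*g\|_{L^1}\big]=\int_{\Td}\Ex\big[|(J_\delta*g)(\x)|\big]\,\d\x$ accomplishes, trading the forbidden interchange for the harmless factor $\delta^{-d-1}$ in the mollifier estimate. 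The remaining steps — the energy and growth estimates, the Nikolskii mollification bound, and the final optimization in $\delta$ — are routine.
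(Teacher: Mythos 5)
Your argument is correct in substance but follows a genuinely different route from the paper. The paper stays entirely at the kinetic level: it tests the kinetic formulation with the specially built test function $\varphi(\xi,\x,t)=(J_\theta*(\sgn(w(\cdot,t)))_+)(\x)\,\eta_\rho'(\xi-u(t))$, uses the sign of the defect measures $m^u+n^u$, estimates the flux, parabolic, It\^o-correction, noise and mollification terms separately (the last via the spatial fractional $BV$ bound of \S 5), and then optimizes $\rho=\theta^2$, $\theta=(\Delta t)^\alpha$, which yields a dimension-free exponent of order $\min\{\kappa_1,\kappa_2\}/(2+\min\{\kappa_1,\kappa_2\})$. You instead descend to the bulk weak formulation (justified by passing to the limit in the viscous approximations of \S 7, which is legitimate since the paper constructs the unique kinetic solution exactly that way), extract a weak-in-space increment bound of order $(\Delta t)^{1/2}$ against fixed $C^2$ test functions, and interpolate with the uniform-in-time Nikolskii bound through mollification, with the Tonelli step correctly avoiding any interchange of $\Ex$ with a supremum over test functions. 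What your route buys is conceptual simplicity (no kinetic test function, no defect-measure sign argument beyond what is already encoded in the weak form); what it costs is a smaller, dimension-dependent exponent $\beta=\kappa_s/(2(\kappa_s+d+1))$ with $\kappa_s=\min\{\kappa_1,\kappa_2\}$, versus the paper's $d$-independent rate --- still sufficient for the statement, which only asserts existence of some $\beta>0$.

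Two points should be tightened, though neither is fatal. First, in the diffusion estimate your displayed bound involves $\bigl(\Ex\int_t^{t+\Delta t}\|\ab(u)\,\nabla\cdot\int_0^{u}\ab\|_{L^1}^2\,\d s\bigr)^{1/2}$, whose finiteness is not guaranteed by the hypotheses: the parabolic dissipation is only controlled in first moment (Definition \ref{def:ksolution}(i)), so a product of a random sup-in-time factor with the defect mass cannot be split without higher moments. The repair is to apply Cauchy--Schwarz jointly in $(\omega,s,\x)$,
\begin{align*}
\Ex\int_t^{t+\Delta t}\!\!\int_{\Td}\Big|\ab(u)\Big|\Big|\nabla\cdot\!\int_0^{u}\ab(\zeta)\,\d\zeta\Big|\,\d\x\,\d s
\le \Big(\Ex\int_t^{t+\Delta t}\!\!\int_{\Td}|\ab(u)|^2\Big)^{1/2}\Big(\Ex\int_0^{T}\!\!\int_{\Td}\Big|\nabla\cdot\!\int_0^{u}\ab(\zeta)\,\d\zeta\Big|^2\Big)^{1/2},
\end{align*}
which is $O((\Delta t)^{1/2})$ uniformly in $t$ once $p$ exceeds twice the polynomial degree of $\ab$; this gives exactly the rate you need. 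Second, your flux estimate invokes $\Ex[\sup_s\|\FF(u(\cdot,s),\cdot)\|_{L^1}]$, i.e.\ growth control of $\FF$ itself rather than of $\FF_u$ and $D_\x\cdot\FF$; under \eqref{eq:A_kappa1} alone this requires slightly more integrability of $u$ than the bare $L^p$ bound unless one also assumes $\FF_u(0,\cdot)\in L^\infty$ and $\kappa_{F1}\le 1$. This is, however, the same level of implicit growth assumption that the paper itself uses when it asserts the flux-term bound ``by the $L^p$ estimate of $u$,'' so it is a presentational caveat rather than a gap specific to your argument.
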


\begin{pf}
 Define the temporal difference:
\begin{align*}
w(\cdot, t):= u(\cdot, t + \Delta t) - u(\cdot, t).
\end{align*}
From the definition of stochastic kinetic solutions,
for a test function $\varphi(\xi,\x,t)$,
we have
\begin{align*}
& \LL \bar{H}(\xi - u(\cdot,t + \Delta t)), \varphi \RR - \LL \bar{H}(\xi - u(\cdot,t)), \varphi \RR\\
&= \int_t^{t  + \Delta t} \LL \bar{H}(\cdot - u(\cdot,s))\,\FF_u , \nabla \varphi \RR\;\d s
  - \int_t^{t + \Delta t} \LL \, \bar{H}(\cdot - u(\cdot,s))\,D_\x\cdot\FF, \varphi_\xi \RR \;\d s\\
&\quad  + \int_t^{t + \Delta t} \LL \bar{H}(\cdot - u(\cdot,s)), \A: \nabla^2 \varphi\RR \;\d s
  - \int_t^{t+\Delta t}\iint \varphi_\xi\, \d (m^u + n^u)(\xi,\x,s)\\
&\quad + \frac{1}{2} \int_t^{t + \Delta t} \int \sigma^2(u(\x,s)) \varphi_\xi(u(\x,s),\x, t)\;\d\x\d s\\
&\quad + \int_t^{t + \Delta t} \int  \sigma(u(\x,s))\varphi(u(\x,s),\x) \;\d\x\d W(s),
\end{align*}
where, as in (\ref{eq:definsol2}),
the angle brackets represent the integrals in $(\x,\xi)$.
As before, $\bar{H}:= 1 - H$ with $H$ as the Heaviside function.

We now choose a test function that is monotonically increasing in the kinetic variable $\xi$,
so that we can avail ourselves of the sign of the defect measures
in the effort to estimate the left-hand side.
We retain the positive part function
in favor of the sign function.

Nevertheless, inspired by \cite{CDK2012}, we use the test function:
\begin{align*}
\varphi(\xi,\x,t)
= (J_\theta * (\sgn(w(\cdot,t)))_+)(\x)\,\eta_\rho'(\xi - u(\x,t))\ge 0,
\end{align*}
where $J_\theta$ is again an approximation to $\delta_0(\x)$
that is a smooth non-negative function with support on $B_\theta(\mathbf{0})$
and unit mass.
Let $\eta_\rho: \mathbb{R} \to \mathbb{R}$  continue to be
as in the construction given in (\ref{eq:eta_construction}).

Integrating above from $0$ to $T - \Delta t$ in $t$, we have the expression:
\begin{align}
& \int_0^{T - \Delta t}\LL \bar{H}(\xi - u(\cdot,t + \Delta t))- \bar{H}(\xi - u(\cdot,t)),\,\varphi\RR\,\d t\notag\\
&=  \int_0^{T - \Delta t}\int_t^{t  + \Delta t} \LL \bar{H}(\cdot - u(\cdot,s))\,\FF_u, \nabla \varphi \RR\;\d s\d t\notag\\
&\quad  -\int_0^{T - \Delta t} \int_t^{t + \Delta t} \LL \bar{H}(\cdot - u(\cdot,s))\,D_\x\cdot \FF,\,\varphi_\xi\RR \;\d s\d t\notag\\
&\quad + \int_0^{T - \Delta t}\int_t^{t + \Delta t} \LL \bar{H}(\cdot - u(\cdot,s)),\,\A:\nabla^2 \varphi\RR \;\d s\d t\notag\\
&\quad - \int_0^{T - \Delta t}\int_t^{t+\Delta t}\iint \varphi_\xi\,\d (m^u + n^u)(\xi,\x,s)\;\d t\notag\\
&\quad + \frac{1}{2} \int_0^{T - \Delta t} \int_t^{t + \Delta t} \int \sigma^2(u(\x,s)) \varphi_\xi(u(\x,s),\x, t)\;\d\x\d s\d t\notag\\
&\quad + \int_0^{T - \Delta t}\int_t^{t + \Delta t} \int \sigma(u(\x,s))\varphi(u(\x,s),\x, t) \;\d \x\d W(s)\d t\notag\\
&\quad  + \int_0^{T -\Delta t} \LL\bar{H}(\xi - u(\cdot,t + \Delta t)) - |w(\cdot,t)|,\, \varphi \RR\,\d t.\label{8.1a}
\end{align}
Notice that, though the test function $\varphi$ depends on $u(\cdot,t + \Delta t)$,
one can integrate first in $s$ in the stochastic integral above
so that all the integrals are adapted and well-defined either in the Lebesgue--Stieljes sense
or, more generally, in the It\^o sense.

On the left-hand side of \eqref{8.1a}, from the presence of $\eta'_\rho(\xi - u(\cdot,t))$ in the definition
of $\varphi$,  we expect that $\LL H(\xi - u(\cdot,t)), \varphi\RR \to 0$ as $\rho \to 0$. We have the following estimate:
\begin{align*}
\left|\int_0^{T - \Delta t}\LL \bar{H}(\xi - u(\cdot,t)), \varphi\RR\,\d t\right|\leq C_T \rho.
\end{align*}
For the right-hand side of \eqref{8.1a}, we first note  that, as remarked previously,
\[
\int_t^{t+\Delta t}\iint \varphi_\xi\,\d (m^u + n^u)(\xi,\x,s)\geq 0.
\]
We proceed to analyze the remaining parts of the right-hand side of \eqref{8.1a}.

\smallskip
{\it Flux terms}:
Since $D_\x\cdot\FF$ has linear growth in $u$, then the $L^p$ estimate of $u$ implies that
\begin{align*}
&\Big|\Ex\big[\int_0^{T -\Delta t}\int_t^{t  + \Delta t} \LL \bar{H}(\cdot - u(\cdot,s))\,\FF_u, \nabla \varphi\RR\;\d s\d t\big]\\
&\,\, -\Ex\big[\int_0^{T -\Delta t}\int_t^{t + \Delta t} \LL \bar{H}(\cdot - u(\cdot,s))\, D_\x\cdot\FF, \,\varphi_\xi\RR \;\d s\d t\big]\Big|\\
& \leq  C_T \big(\theta^{-1}\Delta t + \rho^{-1}\Delta t\big).
\end{align*}

\smallskip
{\it Parabolic term}:
Using the polynomial growth of the entries of $\A$, we have
\begin{align*}
\Big|\Ex\big[\int_0^{T -\Delta t} \int_t^{t + \Delta t} \LL \bar{H}(\cdot - u(\cdot,s)), \,\A:\nabla^2 \varphi\RR \;\d s\d t\big]\Big|
 \leq  C_T \theta^{-2}\Delta t.
\end{align*}

\smallskip
{\it It\^o Correction term}:
Using the linear growth of $\sigma$,  the $L^p$ estimate of $u$ implies that
\begin{align*}
\frac{1}{2}\Big|\Ex\big[\int_0^{T -\Delta t}\int_t^{t + \Delta t} \int \sigma^2(u(\x,s))\, \varphi_\xi(u(\x,s),\x;t)\;\d\x\d s\d t\big]\Big|
\leq C_T \rho^{-1}\Delta t.
\end{align*}

\smallskip
{\it Noise term}:
Using the Burkholder--Davis--Gundy inequality and the $L^p$ estimate of $u$ yield
\begin{align*}
\Big|\Ex\big[\int_0^{T - \Delta t}\int_t^{t + \Delta t} \int  \sigma(u(\x,s))\varphi(u(\x,s),\x) \;\d\x\d W(s)\d t\big]\Big|
\leq C_T \sqrt{\Delta t}.
\end{align*}

\smallskip
{\it Mollification term}:
Since $D_\x\cdot \FF$ is $\kappa_2$-H\"older in $\x$,
we use the fractional $BV$ estimate in $\x$  in \S 5 to obtain
as in Chen--Ding--Karlsen \cite{CDK2012}:
\begin{align*}
&\Big|\Ex\big[\int_0^{T -\Delta t} \big(\LL \bar{H}(\xi - u(\cdot,t + \Delta t)), \varphi\RR -w(\cdot,t)_+\big)\,\d t\big]\Big|\\
&\leq  \Ex\big[\int_0^{T - \Delta t}\iint J_\theta( \x - \y) |w(\x,t) - w(\y,t)|\;\d\x\d\y\d t\big]\\
&\leq  \Ex\big[\int_0^T \int J(\z)\int |u(\x,t) - u(\x - \theta \z, t)|\;\d\x\d\z\d t\big]\\
&\leq C_T \theta^{\min\{\kappa_1,\kappa_2\}}.
\end{align*}

\smallskip
{\it Conclusion:}
Taking $\rho = \theta^2$ and $\theta = (\Delta t)^\alpha$, we have
\begin{align*}
&\Ex\big[\int_0^{T - \Delta t} \int |w(\x,t)|\;\d\x\d t\big]\\
&\le C\big((\Delta t)^{2\alpha} + (\Delta t)^{1-\alpha} + (\Delta t)^{1-2\alpha} + (\Delta t)^{\frac{1}{2}}
  + (\Delta t)^{\alpha\min\{\kappa_1,\kappa_2\}}\big).
\end{align*}
This allows us to optimize $\alpha$ to conclude that there exists $\beta$ depending on $\kappa_1$ and $\kappa_2$ such that
\begin{align}
\Ex\big[\int_0^{T - \Delta t}\int |w(\x,t)|\;\d\x\d t\big]
\le C |\Delta t|^\beta.
\end{align}
This completes the proof.
\end{pf}

\begin{rem}
In \cite{CDK2012}, it is conjectured that the optimal bound for the first-order conservation law is $(\Delta t)^{\frac{1}{2}}$.
If the $BV$ bound of the solution is in place of a fractional $BV$ bound, the conjecture holds true on the torus for that case.
However,
in the second-order case, the presence of the second derivative provides another power
of $\theta^{-1}$ in the presence of a spatial $BV$ bound, which
leads to a bound $C(\Delta t)^{\beta}$ under the optimization.
\end{rem}

\appendix
\section{Existence of Solutions to the Uniformly Parabolic Equations}\label{sec:lwp_parabolic}

In this appendix, we sketch out the proof for the well-posedness of strong solutions to
the stochastic parabolic equations of form:
\begin{equation}\label{eq:system1a}
\begin{cases}
\pd_t u  - \nabla\cdot \left( \left(\ep \mathbf{I}+  \A(u)\right) \nabla u \right)+ \nabla \cdot \FF(u,\x) =  \sigma(u) \dot{W}
  \qquad\mbox{on $\mathbb{T}^d\times [0,T]$},\\
u|_{t=0} = u^0
\end{cases}
\end{equation}
whose coefficients satisfy the assumptions laid out in \eqref{eq:A_F}--\eqref{eq:A_alpha}.
This is a small extension of \cite{GR2000} or \cite[\S 4]{FN2008} (see also \cite[Ch.~3]{Cho2015}).

From \cite{GR2000}, we know that there is a unique strong solution to
\begin{equation}\label{eq:system1}
\begin{cases}
\pd_t u   =  \nabla\cdot \left( \left(\ep \mathbf{I}+  \A(v)\right) \nabla u \right) - \nabla \cdot \FF(v,\x)  -  \sigma(v) \dot{W},\\
u|_{t=0} = u^0
\end{cases}
\end{equation}
for a fixed $v$, an adapted process in $L^p(\Omega;C([0,T];L^p(\T^d)))$.

We consider the linearization:
\begin{align*}
\pd_t u^n &= \ep \Delta u^n + \nabla \cdot \left(\A(u^{n - 1}) \nabla u^n\right) - \nabla \cdot \FF(u^{n - 1}, x) - \sigma(u^{n - 1}) \dot{W}.
\end{align*}
By the Duhamel formula, we use the following iteration scheme:
\begin{align*}
u^n  &= G_{n - 1}(t )* u^{0} - \int_0^t G_{n - 1}(t - s) * \nabla \cdot \FF(u^{n - 1},\cdot)\;\d s\\
&\quad\,\,- \int_0^t G_{n - 1}(t - s) * \sigma(u^{n - 1}(\cdot,s))\;\d W(s),
\end{align*}
where the convolution is in $\x$ only, and $G_{n - 1}$ is the parametrix of the parabolic equation:
$$
\pd_t u  - \nabla \cdot \left((\ep I + \A(u^{n - 1}))\nabla u\right) = 0.
$$
Then the existence and uniqueness follow by a fixed-point argument, as in \cite[\S 4]{GR2000},
by using the additional direct estimates on the parametrix:
\begin{align}\label{eq:kernel_mod_heat}
\left\|G_{n - 1}(t) \right\|_{L^1}\le \left\|G(t)\right\|_{L^1}, \,\,\,
\left\|\nabla G_{n - 1}(t) \right\|_{L^1}\le \left\|\nabla G(t)\right\|_{L^1}  \quad\,\,
 \mbox{uniformly in $n$},
\end{align}
where $G$ is the standard heat kernel (with $\A \equiv 0$).
These bounds hold because, by scaling $t$, we see that the eigenvalues of the operator
(on the compact domain $\mathbb{T}^d$)
when $\A \equiv 0$  must be larger than those when $\A\ge 0$.

Consider $\FF$ with linear growth in $u$.
Then we can estimate $\norm{u}_{L^p}^p$ by using Young's convolution inequality and Minkowski's inequality
as follows:
\begin{align*}
\norm{u^n}_{L^p} \le& \norm{G_{n - 1}(t)}_{L^1} \norm{u^0}_{L^p}
+ \int_0^t \norm{\nabla G_{n - 1}(t - s)}_{L^1} \norm{\FF(u^{n - 1},\cdot)}_{L^p} \;\d s\\
& + \Big\|\int_0^t G_{n - 1}(t - s) *\sigma(u^{n -1}(\cdot,s))\;\d W\Big\|_{L^p}.
 \end{align*}

By the Burkholder--Davis--Gundy inequality, Minkowski's inequality,  and Jensen's inequality, we see that
\begin{align*}
&\Ex[\sup_{r \in [0,t]}\Big\|\int_0^r G_{n - 1}(r - s) *\sigma(u^{n -1}(\cdot,s))\;\d W\Big\|_{L^p}^p]\\
&\le \Ex [\Big\|\int_0^t \left|G_{n - 1}(t - s) *\sigma(u^{n -1}(\cdot,s))\right|^2 \;\d s\Big\|_{L^{\frac{p}{2}}}^{\frac{p}{2}}]\\
& \le \Ex [\Big|\int_0^t \norm{G_{n - 1}(t - s) *\sigma(u^{n -1}(\cdot,s))}_{L^p}^{2} \,\d s\Big|^{\frac{p}{2}}]\\
& \le t^{\frac{p}{2} - 1} \Ex[\int_0^t \norm{G_{n - 1}}_{L^1}^p \norm{\sigma(u^{n - 1})}_{L^p}^p\,\d s].
 \end{align*}

Next, using the fact that $\nabla_\x \cdot \FF$  (and hence $\FF$) has at most linear growth in $u$,
the assumption that $\sigma(u)$ has at most linear growth in $u$, and \eqref{eq:kernel_mod_heat},
we see that, for $p \ge 2$ and a sufficiently small time $t$, the map: $u^{n - 1} \mapsto u^n$ is
a contraction on $L^p(\Omega; C([0,t]; L^p(\T^d)))$.
The constant is independent of $n$, and the fixed-point argument can be iterated as usual to yield the existence
and uniqueness on the whole time interval $[0,T]$ for any fixed $T>0$.
This shows that a unique strong solution exists for \eqref{eq:system1a} in  $L^p(\Omega ;C([0,T]; L^p(\T^d)))$.

\bigskip
\medskip
\textbf{Acknowledgements}. $\,$
The authors would like to thank the anonymous reviewer for truly helpful suggestions and remarks.

\smallskip
\bigskip

\bibliographystyle{model2-names}
\bibliography{chen-pang_20210922}

\end{document}